\documentclass[11pt]{article}
\usepackage{amsfonts, amsmath, amssymb, amscd, amsthm, color, graphicx, mathrsfs, mathabx, wasysym, setspace, mdwlist, calc,float}
\usepackage{setspace}
\usepackage{hyperref}
\usepackage{tikz-cd} 
 \hoffset -1.35cm \voffset -1.2cm \textwidth=6in \textheight=8in
 \tolerance=9000 \emergencystretch=5pt \vfuzz=2pt
 \parskip=1.5mm

 \unitlength=1mm

\allowdisplaybreaks

\usepackage{hyperref}
\usepackage{tocloft}
\setlength{\cftbeforesecskip}{8pt}
\setlength{\cftbeforesubsecskip}{3pt}

\usepackage{xcolor}

\usepackage{hyperref}
\hypersetup{linktocpage}

\hypersetup{colorlinks,
    linkcolor={red!50!black},
    citecolor={blue!80!black},
    urlcolor={blue!80!black}}
\usepackage{float}

\usepackage{comment}

\newcommand{\NN}{\mathbb{N}}
\newcommand{\ZZ}{\mathbb{Z}}

\newcommand{\RR}{\mathbb{R}}
\newcommand{\CC}{\mathbb{C}}

\newcommand{\inj}{\hookrightarrow}

\newcommand{\act}{\curvearrowright}

\renewcommand{\L}{\mathbf{Lab}}
\newcommand{\C}{\mathscr{C}}
\newcommand{\E}{\mathcal{E}}
\newcommand{\V}{\mathcal{V}}
\newcommand{\A}{\mathcal{A}}

\newtheorem{thm}{Theorem}[section]
\newtheorem{cor}[thm]{Corollary}
\newtheorem{lem}[thm]{Lemma}
\newtheorem{prop}[thm]{Proposition}

\newtheorem{ques}[thm]{Question}
\theoremstyle{definition}
\newtheorem{defn}[thm]{Definition}

\theoremstyle{remark}
\newtheorem{rem}[thm]{Remark}


\newcommand{\G}{\mathcal{G}}

\newcommand{\tR}{\widetilde{\mathcal{R}}}
\newcommand{\R}{\mathcal{R}}

\newcommand{\la}{\langle}
\newcommand{\ra}{\rangle}
\newcommand{\sm}{\setminus}

\newcommand{\e}{\mathfrak{e}}

\newcommand{\tPhi}{\widetilde{\Phi}}

\newcommand\blfootnote[1]{%
  \begingroup
  \renewcommand\thefootnote{}\footnote{#1}%
  \addtocounter{footnote}{-1}%
  \endgroup
}

\begin{document}

\title{Small cancellation groups are bi-exact}
\date{}
\author{Koichi Oyakawa}

\maketitle

\vspace{-10mm}

\begin{abstract}
    We prove that finitely generated (not necessarily finitely presented) $C'(\frac{1}{33})$-groups are bi-exact. This is a new class of bi-exact groups.
\end{abstract}

\section{Introduction}
\blfootnote{\textbf{MSC} Primary: 20F65. Secondary: 20F67, 20F99, 46L10.}
\blfootnote{\textbf{Key words and phrases}: bi-exact groups, small cancellation groups, proper arrays.}
Bi-exactness is an analytic property of groups defined by Ozawa in \cite{Oz04}, where it was proved that the group von Neumann algebra $L(G)$ of any non-amenable bi-exact icc group $G$ is prime (i.e. it cannot be decomposed as a tensor product of two $\rm{II}_1$ factors). Right after that, Ozawa and Popa proved a unique prime factorization theorem for non-amenable bi-exact icc groups in \cite{OP}, which was followed by substantial amount of research about consequences of bi-exactness in terms of operator algebras and ergodic theory. Non-exhaustive lists of these include measure equivalence rigidity in \cite{Sako}, amalgamated free product rigidity in \cite{CI}, and study of group actions on standard measure spaces and von Neumann algebras in \cite{HI} and \cite{HMV}. More recently, proper proximality was defined by Boutonnet, Ioana, and Peterson in \cite{BIP}, which is another analytic property of groups considered as generalization of bi-exactness, and structures of von Neumann algebras related to this class of groups were studied.

Considering the significance of bi-exactness in these fields, it is natural to ask what are examples of bi-exact groups. Below is the list of all classes of groups that are known to be bi-exact so far:
\begin{itemize}
    \item
    amenable groups,
    \item
    discrete subgroups of connected simple Lie groups of rank one (\cite[Th\'eor\`eme 4.4]{Ska}),
    \item
    hyperbolic groups (\cite[Lemma 6.2.8]{HG}),
    \item
    $\ZZ^2 \rtimes SL_2(\ZZ)$ (\cite{Oz09}).
\end{itemize}
Notably, only $\ZZ^2 \rtimes SL_2(\ZZ)$ has been newly found since Ozawa defined bi-exactness in \cite{Oz04}. While there are some results to create a bi-exact group from existing bi-exact groups by group construction (cf. \cite{Oz06b}\cite{BO}\cite{Oy}), discovering a new class of bi-exact groups is not well-studied. In this paper, we push this direction of research forward by proving the following.
\begin{thm}
\label{thm:main}
Any finitely generated (not necessarily finitely presented) $C'(\frac{1}{33})$-group is bi-exact.
\end{thm}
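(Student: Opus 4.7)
The plan is to construct a proper array on $G$ and invoke the known criterion that proper arrays imply bi-exactness, thereby reducing the analytic statement to a purely geometric/combinatorial construction. This is the only viable route for infinitely presented $C'(1/33)$-groups, which need not be hyperbolic, acylindrically hyperbolic in a strong enough sense, or have any of the other structural features used in the four previously known classes.

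The geometric input would come from van Kampen diagrams for the classical small cancellation presentation. I would let $X$ be a countable $G$-set indexing the combinatorial features seen along geodesics in the Cayley graph --- natural candidates are the pieces a geodesic traverses, the relator-discs it crosses, or the maximal arcs on which a geodesic coincides with a boundary cycle of some relator. For a group element $g$ represented by a chosen geodesic word, one then sets $q(g) \in \ell^2(X)$ to be the sum of indicators of these features, suitably normalized. The almost-equivariance estimate $\|\pi(g) q(h) \sigma(g)^{-1} - q(gh)\|$ must be controlled by comparing two geodesics representing $gh$: one obtained by concatenation through $g$ and one chosen directly. Greendlinger's lemma for $C'(1/6)$-presentations, which is available also in the infinitely presented setting, forces the symmetric difference of their feature sets to be uniformly bounded, delivering the cocycle-like estimate.

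The hard part will be verifying properness, namely that $\|q(g)\| \to \infty$ as $|g| \to \infty$, and this is precisely where the $1/33$ threshold should enter. Two competing quantitative estimates need to be balanced: a lower bound on the number of essentially distinct combinatorial features seen along a geodesic of length $n$ (forcing it to grow linearly in $n$), and an upper bound on the overlap or multiplicity of these features (forcing $\ell^2$-cancellation to be small). Both bounds degrade linearly in the small cancellation parameter $\lambda$, and $1/33$ is almost certainly the largest $\lambda$ for which they simultaneously close --- one likely has to combine a refined Greendlinger estimate (say, guaranteeing a boundary arc of length at least $(1-3\lambda)$ times the relator length) with a pigeonhole argument on piece crossings, and track the constants through both. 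I expect the bulk of the paper to be this combinatorial book-keeping on diagrams, while the reduction from proper array to bi-exactness is essentially black-boxed from the operator algebra literature.
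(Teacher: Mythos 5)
Your high-level strategy is aligned with the paper: reduce bi-exactness to the existence of a proper array (the Popa--Vaes/Chifan--Sinclair--Udrea criterion), build the array out of combinatorial data read off van Kampen diagrams, control almost-equivariance via Greendlinger-type statements, and get exactness as a black box (the paper uses Sledd's finite asymptotic dimension result). So the ``what'' is right. However, the proposed ``how'' has two genuine gaps that would make the argument fail as stated.

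First, your key estimate --- that the symmetric difference of the feature sets of two geodesics for $gh$ is \emph{uniformly bounded} --- is false for raw indicator functions. A contour whose intersection with one geodesic is just above whatever threshold you set can drop just below it for a nearby geodesic; since relators can be arbitrarily long in the infinitely presented case, this single discontinuity already produces an unbounded $\ell^1$ (or $\ell^2$) jump in the indicator sum. Greendlinger's lemma does not save you here: it controls \emph{which} contours are hit and how much they are hit, but not the marginal membership of a contour whose overlap fraction sits near the cutoff. The paper's fix is to replace the $0/1$ membership indicator by a piecewise-linear ramp $\psi$ between two thresholds $\nu_0 < \nu_1$, and then, because adjacent contours along a geodesic overlap in pieces, to subtract off inductively defined correction terms $\rho,\sigma$ for the shared pieces. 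This turns the change under a single edge-move into a \emph{geometric} series $\sum_i K^{i-1}/(\nu_1-\nu_0)$ with $K = \lambda/(\nu_1-\nu_0) < 1$ rather than a uniform per-contour bound, and that decay is what actually closes the estimate (Lemma \ref{lem:xi estimate}, Proposition \ref{prop:xi array}).

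Second, a single ``features along the geodesic'' array cannot be proper. The paper needs \emph{two} arrays with different supports and different cutoffs: one on the set of contours $\C$ weighted by relator length (the ``peripheral'' part, $\xi$, with cutoffs $\nu_{1,0},\nu_{1,1}$) and one on the set of edges $\E$ recording the complement of high-overlap contours (the ``hyperbolic'' part, $\eta$, with cutoffs $\nu_{2,0},\nu_{2,1}$). Properness is shown by splitting a geodesic $p$ into the portion covered by high-overlap contours (which $\xi$ sees at full weight) and the rest (which $\eta$ sees at weight $1$), and this decomposition only works because the two cutoff regimes are compatible via $\nu_{1,1}+2\lambda \le \nu_{2,0}-2\lambda$ (Lemma \ref{lem:to show proper}). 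Stacking these constraints together with $\nu_{2,1}\le \frac{1}{2}-4\lambda$ and the spacing $\nu_0+\lambda<\nu_1$ is where the constant comes from, giving $\lambda < \frac{1}{32}$ as a threshold, for which $\frac{1}{33}$ is a convenient concrete choice, not an identified optimum. Finally, these constructions land naturally in $\ell^1(G)$, and the paper passes to $\ell^2$ by taking pointwise square roots, using $|\sqrt a - \sqrt b|^2 \le |a-b|$; some such device is needed since the natural estimates are additive rather than quadratic, and your proposal does not account for this.
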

Theorem \ref{thm:main} provides rich source of bi-exact groups, because there are uncountably many quasi-isometry classes of finitely generated $C'(\frac{1}{33})$-groups by \cite{Bow98}. Also, note that finitely generated $C'(\frac{1}{33})$-groups are no longer hyperbolic if they are infinitely presented. Moreover, this class even contains non-relaively hyperbolic groups. The standard method to prove bi-exactness is to use topologically amenable action on some compact space that is `small at infinity'. However, this method is not applicable to prove Theorem \ref{thm:main}, because such a nice compactification of small cancellation groups has not been found unless they're hyperbolic. This is apparently why small cancellation groups were not found to be bi-exact. We overcome this difficulty by using another characterization of bi-exactness that was introduced by Chifan, Sinclair, and Udrea in \cite{CSU} and later generalized by Popa and Vaes in \cite{PV}. This characterization is based on proper arrays, which can be considered as a weaker version of quasi-cocycles, and turned out to be compatible with geometric nature of groups in \cite{Oy}. More specifically, we prove Proposition \ref{prop:intro proper array} below, which is a major step for proving Theorem \ref{thm:main}.
\begin{prop}\label{prop:intro proper array}
Let $G=\la X \mid \R \ra$ be a finitely generated $C'(\frac{1}{33})$-group such that every word $r \in \R$ satisfies $|r| > 33$ and for every letter $x\in X$, there exists $r\in \R$ such that $x$ appears in the word $r$. Let $\lambda_G \colon G\act\ell^1(G)$ be the left regular representation of $G$. Then, there exists a map $\Phi \colon G\times G\ni (g,h) \mapsto \Phi[g,h]\in\ell^1(G)$ satisfying the following five conditions.
\begin{itemize}
    \item[(1)]
    For any $g,h\in G$, $\Phi[g,h]$ has finite support and $\Phi[g,h](k)\ge 0$ for any $k\in G$.
    \item[(2)]
    $\Phi$ is symmetric, i.e. $\Phi[g,h]=\Phi[h,g]$ for any $g,h\in G$.
    \item[(3)]
    $\Phi$ is $G$-equivariant, i.e. $\Phi[kg,kh]=\lambda_G(k)\Phi[h,g]$ for any $g,h,k\in G$.
    \item[(4)]
    $d_X(g,h)\le \|\Phi[g,h]\|_1$ for any $g,h\in G$.
    \item[(5)]
    There exists $L\in \RR_{>0}$ such that $\|\Phi[g,h]-\Phi[gk,h]\|_1\le L\cdot d_X(1,k)$ for any $g,h,k\in G$.
\end{itemize}
\end{prop}

The idea of the proof of Proposition \ref{prop:intro proper array} is to consider a Cayley graph of a $C'(\frac{1}{33})$-group to be `hyperbolic relative to loops' (This terminology is just for explanation which should be understood by heart. I don't claim any rigorous definition for this.) and construct two arrays that control `peripheral part' and `hyperbolic part'. This idea is similar to coning-off the Cayley graph. The difficult part of the proof is to reconcile the conditions (4) and (5). We make it possible by inventing a machinery in which loops sitting between two vertices interact one another (see the argument from Definition \ref{def:alpha beta gamma}). The number $\frac{1}{33}$ in Theorem \ref{thm:main} comes from the choices of constants at the beginning of Section \ref{sec:main thm}.

Another important point of Theorem \ref{thm:main} is the following immediate corollary.

\begin{cor} \label{cor:main}
    Any non-virtually cyclic finitely generated $C'(\frac{1}{33})$-group is properly proximal.
\end{cor}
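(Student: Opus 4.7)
The plan is to reduce Corollary \ref{cor:main} to Theorem \ref{thm:main} via the general implication, established in \cite{BIP}, that every non-amenable bi-exact group is properly proximal. Recall that proper proximality was introduced in \cite{BIP} precisely as a weakening of bi-exactness that still distinguishes non-amenable behaviour; in particular, amenable groups fail to be properly proximal, which is exactly why the non-virtually-cyclic hypothesis is needed in the corollary. Thus only two things remain to be checked about a finitely generated, non-virtually-cyclic $C'(\frac{1}{33})$-group $G$: bi-exactness and non-amenability.

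Bi-exactness is immediate from Theorem \ref{thm:main}. For non-amenability, I would invoke a Tits-type alternative in small cancellation theory: a finitely generated $C'(\frac{1}{6})$-group (and hence any $C'(\frac{1}{33})$-group) is either virtually cyclic or contains a non-abelian free subgroup. This is classical for finitely presented small cancellation groups (going back to Strebel and Ol'shanskii) and is known to extend to the possibly infinitely presented / graphical $C'(\frac{1}{6})$ setting by work of Arzhantseva and others. Since $G$ is assumed non-virtually-cyclic, it therefore contains a copy of $F_2$ and is non-amenable.

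Putting the pieces together, $G$ is non-amenable by the Tits alternative and bi-exact by Theorem \ref{thm:main}, so \cite{BIP} supplies proper proximality and the corollary follows. The one mildly non-formal step is locating a reference for the Tits alternative at the level of generality needed, namely finitely generated but possibly infinitely presented $C'(\frac{1}{33})$-groups; this is the only place where I anticipate any obstacle, and it is purely expository. Beyond that, the corollary is a direct formal consequence of Theorem \ref{thm:main}, requiring no further operator-algebraic or geometric argument.
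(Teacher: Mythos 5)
Your overall skeleton is exactly the paper's: bi-exactness from Theorem~\ref{thm:main}, non-amenability from the existence of a non-abelian free subgroup, and then the implication (non-amenable $+$ bi-exact $\Rightarrow$ properly proximal) from \cite{BIP}. The only place you diverge is the source of the free subgroup. You reach for a ``Tits alternative for small cancellation groups'' attributed loosely to Strebel/Ol'shanskii/Arzhantseva and correctly flag that you don't have a reference covering finitely generated but possibly infinitely presented $C'(1/6)$-groups. The paper fills that exact gap with a sharper and more directly citable fact: by Gruber--Sisto \cite[Theorem~1.3]{GS}, every non-virtually-cyclic finitely generated $C'(1/6)$-group is acylindrically hyperbolic, and non-elementary acylindrically hyperbolic groups contain $F_2$. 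So your argument is correct and essentially the same; the one step you worried about is precisely the one where the paper substitutes the Gruber--Sisto acylindrical hyperbolicity theorem for your informal Tits-alternative appeal, and you should do the same to make the proof self-contained.
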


\begin{proof}
    Any non-virtually cyclic finitely generated $C'(\frac{1}{33})$-group $G$ is acylindrically hyperbolic by \cite[Theorem 1.3]{GS}, hence contains a free group of rank 2. This and Theorem \ref{thm:main} imply that $G$ is non-amenable and bi-exact. By \cite{BIP}, any non-amenable bi-exact group is properly proximal.
\end{proof}

This supports a positive answer to the following folklore question. There are also other results that support the positive answer to Question \ref{ques:folklore} (cf. \cite{CJJ} \cite{CS22}).

\begin{ques}\label{ques:folklore}
Is any acylindrically hyperbolic group properly proximal?
\end{ques}

Corollary \ref{cor:main} also answers one of questions posed in \cite{CJJ}, where they proved proper proximality of various groups acting on non-positively curved spaces and asked whether other classes of groups with features of non-positive curvature, including small cancellation groups, are properly proximal.

The paper is organized as follows. In Section \ref{sec:preliminary}, we discuss the necessary definitions and known results about bi-exact groups and $C'(\lambda)$-groups. In Section \ref{sec:construction of arrays}, we discuss the construction of two arrays. Section \ref{subsec:first array} deals with contours (cf. Definition \ref{def:Cayley}) of a Cayley graph which is considered as `peripheral part' and Section \ref{subsec:second array} deals with undirected edges of a Cayley graph which is considered as `hyperbolic part'. In Section \ref{sec:main thm}, we first prove Proposition \ref{prop:proper array} by combining the two arrays constructed in Section \ref{sec:construction of arrays} and then use it to prove Theorem~\ref{thm:main}. In Section \ref{sec:infinite}, we prove that some infinitely generated small cancellation groups are bi-exact in Theorem \ref{thm:infinite case} by generalizing Theorem \ref{thm:main}.

\vspace{2mm}

\noindent\textbf{Acknowledgment.}
I would like to thank Denis Osin for introducing this topic to me, for sharing his ideas, and for many helpful discussions. I would also like to thank an anonymous referee for many helpful comments.

\section{Preliminary}\label{sec:preliminary}
\subsection{Bi-exact groups}

In this section, we introduce some equivalent conditions of bi-exactness.
\begin{defn}
A group $G$ is called \emph{exact} if there exists a compact Hausdorff space on which $G$ acts topologically amenably and \emph{bi-exact} if it is exact and there exists a map $\mu\colon G\to{\rm Prob}(G)$ such that for any $s,t\in G$, we have
\[\lim_{x\to \infty}\|\mu(sxt)-s.\mu(x)\|_1=0.\]
\end{defn}

The definition of an array given below was suggested in \cite[Definition 2.1]{CSU}.

\begin{defn}\label{def:array}
Suppose that $G$ is a group, $\mathcal{K}$ a Hilbert space, and $\pi\colon G\to \mathcal{U}(\mathcal{K})$ a unitary representation. We denote $\pi(g) \in \mathcal{U}(\mathcal{K})$ by $\pi_g$ for each $g \in G$. A map $r\colon G\to \mathcal{K}$ is called an \emph{array} on $G$ into $(\mathcal{K},\pi)$, if $r$ satisfies (1) and (2) below. When there exists such $r$, we say that $G$ admits an array into $(\mathcal{K},\pi)$.
\begin{itemize}
    \item [(1)]
    $\pi_g(r(g^{-1}))=r(g)$ for all $g\in G$.
    \item [(2)]
    For every $g\in G$, we have $\sup_{h\in G}\|r(gh)-\pi_g(r(h))\|<\infty$.
\end{itemize}
If, in addition, $r$ satisfies (3) below, $r$ is called \emph{proper}.
\begin{itemize}
    \item [(3)]
    For any $N\in \NN$, $\{g\in G\mid \|r(g)\|\le N\}$ is finite.
\end{itemize}
\end{defn}

\begin{rem}
    More precisely, an array as in Definition \ref{def:array} is called a \emph{symmetric array} in \cite{CSU}. If a map $r\colon G\to \mathcal{K}$ satisfies
    \begin{itemize}
        \item [(1')]
        $\pi_g(r(g^{-1}))=-r(g)$ for all $g\in G$
    \end{itemize}
    and Definition \ref{def:array} (2), then $r$ is called an \emph{anti-symmetric array}. In fact, we will construct symmetric arrays of a $C'(\frac{1}{33})$-group in Section \ref{sec:construction of arrays} and Proposition \ref{prop:proper array}. Anti-symmetric arrays are used only in Lemma \ref{lem:free product} in this paper.
\end{rem}

Proposition \ref{prop:bi-exact} (2) and (3) are simplified versions of \cite[Proposition 2.3]{CSU} and \cite[Proposition 2.7]{PV} respectively.

\begin{prop}\label{prop:bi-exact}
Suppose that $G$ is a countable group. Then, (1), (2), and (3) below are equivalent.
\begin{itemize}
    \item[(1)]
    $G$ is bi-exact.
    \item[(2)]
    $G$ is exact and admits a proper array into the left regular representation $(\ell^2(G),\lambda_G)$.
    \item[(3)]
    $G$ is exact, and there exist an orthogonal representation $\eta\colon G \to \mathcal{O}(K_\RR)$ to a real Hilbert space $K_\RR$ that is weakly contained in the regular representation of $G$ and a map $c\colon G\to K_\RR$ that is proper and satisfies
\[\sup_{k\in G}\|c(gkh)-\eta_g c(k)\|<\infty\] 
    for all $g,h\in G$.
\end{itemize}
\end{prop}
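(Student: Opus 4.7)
My plan is to derive the three-way equivalence by combining \cite[Proposition 2.3]{CSU} and \cite[Proposition 2.7]{PV}: the former yields $(1)\Leftrightarrow(2)$ and the latter $(1)\Leftrightarrow(3)$, so that $(2)\Leftrightarrow(3)$ follows through $(1)$. Both cited propositions characterize bi-exactness (in the presence of exactness) by existence of a proper cocycle-like map into a representation that is weakly contained in $\lambda_G$. The ``simplification'' here is merely to specialize the target representation to $\lambda_G$ itself, which is the universal object in the weakly-contained class, so no generality is lost.

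For the easy direction $(1)\Rightarrow(2)$: given the asymptotically equivariant $\mu\colon G\to {\rm Prob}(G)$ witnessing bi-exactness, I would build an array into $\ell^2(G)$ by a square-root and anti-symmetrization procedure, e.g.\ $r(g)=\sqrt{\mu(g)}-\lambda_g\sqrt{\mu(g^{-1})}$ after suitable normalization. The asymptotic invariance of $\mu$ yields axiom Definition \ref{def:array}(2), the anti-symmetry axiom (1) is enforced by construction, and properness follows from the fact that $\mu(x)$ must concentrate away from the identity as $x\to\infty$. The direction $(1)\Rightarrow(3)$ comes from the same construction after realification of $\ell^2(G)$.

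Once $(1)\Leftrightarrow(2)$ is in hand, the direct implication $(2)\Rightarrow(3)$ can also be verified by hand: take $K_\RR$ to be the realification of $\ell^2(G)$, set $\eta=\lambda_G$ (which is trivially weakly contained in itself), and put $c=r$. The two-sided bound required in (3) follows from the one-sided axiom Definition \ref{def:array}(2) via the identity
\[r(gkh)-\lambda_g r(k)=\bigl[r(gkh)-\lambda_g r(kh)\bigr]+\lambda_g\bigl[r(kh)-r(k)\bigr].\]
The first bracket is bounded by axiom (2). For the second, the anti-symmetry axiom (1) gives $r(kh)-r(k)=\lambda_k\bigl[r(k^{-1})-\lambda_h r(h^{-1}k^{-1})\bigr]$, whose norm is again bounded by axiom (2) applied with group element $h$ and argument $h^{-1}k^{-1}$.

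The main obstacle, common to the hard directions $(2)\Rightarrow(1)$ and $(3)\Rightarrow(1)$, is the genuinely technical passage from a proper array/cocycle-like map to a topologically amenable action witnessing bi-exactness. This is the substantive content of the CSU and PV propositions and will be invoked as a black box; the role of weak containment there is to approximate matrix coefficients of $\eta$ by those of $\lambda_G$ through a diagonal argument over increasing finite subsets of $G$, while preserving properness. Since we have at hand the full regular representation, this approximation step is trivial in our setting, and the statements collapse to the form recorded above.
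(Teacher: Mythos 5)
The paper does not actually prove this proposition: it is stated as a fact and simply attributed, immediately above the statement, to \cite[Proposition 2.3]{CSU} for the equivalence $(1)\Leftrightarrow(2)$ and \cite[Proposition 2.7]{PV} for $(1)\Leftrightarrow(3)$. Your proposal takes the same route (invoking the two cited propositions as black boxes and routing $(2)\Leftrightarrow(3)$ through $(1)$), so it is essentially the same approach.

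Your additional computation verifying $(2)\Rightarrow(3)$ directly is correct: the telescoping
\[
r(gkh)-\lambda_g r(k)=\bigl[r(gkh)-\lambda_g r(kh)\bigr]+\lambda_g\bigl[r(kh)-r(k)\bigr]
\]
handles the left-hand side by Definition~\ref{def:array}(2), and your use of anti-symmetry to write $r(kh)-r(k)=\lambda_k\bigl[r(k^{-1})-\lambda_h r(h^{-1}k^{-1})\bigr]$ is exactly the standard trick to convert a one-sided array bound into the two-sided bound required in (3). One small point to tidy: your phrase that no generality is lost because $\ell^2(G)$ is the ``universal object in the weakly-contained class'' glosses over the actual content --- there is no canonical intertwiner from a representation weakly contained in $\lambda_G$ into $\lambda_G$ itself, so the specialization is not automatic from universality. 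What actually justifies the simplification is that the construction in \cite{CSU} of an array from a bi-exact group already lands in $\ell^2(G)$ directly, so the forward direction $(1)\Rightarrow(2)$ holds with the sharper target, while $(2)\Rightarrow$(CSU condition) is trivial since $\lambda_G\prec\lambda_G$. With that reading, your argument is sound.
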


\subsection{Small cancellation groups}
For a set of letters $X$, we denote by $F(X)$ the free group generated by $X$. Elements of $F(X)$ are reduced words in $X$. We call a subset $\R$ of $F(X) \sm \{1\}$ \emph{symmetric}, if for any $r\in \R$, $r$ is cyclically reduced and all cyclic permutations of $r$ and $r^{-1}$ are contained in $\R$. For $u,v \in \R$ such that $u \neq v$ in $F(X)$, a word $w\in F(X) \sm \{1\}$ is called the \emph{piece} for $u$ and $v$, if $w$ is the largest common prefix of $u$ and $v$ (i.e. $u$ and $v$ are decomposed into $u=wu'$ and $v=wv'$ with $u',v'\in F(X)$ and the initial letters of $u'$ and $v'$ are distinct). In the context of group presentations, elements in $\R$ are also called \emph{relations}.

\begin{defn}
\label{def:C'-condition}
For a set of letters $X$ and a real number $\lambda>0$, a symmetric set of relations $\R$ is said to satisfy \emph{$C'(\lambda)$-condition}, if for any two distinct relations $u,v \in\R$, the piece $w$ for $u$ and $v$ satisfies
\[
|w|<\lambda \min\{|u|,|v|\},
\]
where $|\cdot|$ denotes word length.
\end{defn}

\begin{defn} \label{def:C'-group}
For a real number $\lambda>0$, a group $G$ is called a \emph{$C'(\lambda)$-group}, if $G$ has a group presentation $G=\la X \mid \R \ra$ such that $\R$ is symmetric and satisfies $C'(\lambda)$-condition. If in addition, $X$ is finite, then $G$ is called a \emph{finitely generated $C'(\lambda)$-group}.
\end{defn}

Note that $\R$ can be infinite. In the following, we state known facts of $C'(\lambda)$-groups. Lemma \ref{lem:Green} is a simplified version of \cite[Theorem 4.4]{LS} which is also known as Greendlinger's lemma.


\begin{lem}[Greendlinger's lemma]\label{lem:Green}
Suppose that $G=\la X \mid \R \ra$ is a $C'(\lambda)$-group with $\lambda \le \frac{1}{6}$, then any word $w \in F(X) \sm \{1\}$ satisfying $w=_G 1$ contains a subword $s$ of some relation $r \in \R$ such that
\[
|s|>(1-3\lambda)|r|.
\]
\end{lem}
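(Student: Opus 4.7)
The plan is to use the standard van Kampen diagram machinery. Since $w \in F(X) \sm \{1\}$ satisfies $w =_G 1$, there exists a van Kampen diagram $D$ over the presentation $\la X \mid \R \ra$ whose boundary cycle reads $w$. Replacing $D$ by a diagram of minimal area, I may assume $D$ is \emph{reduced}: no two distinct $2$-cells $\Pi_1, \Pi_2$ share an edge such that their boundary labels read from that edge are mutually inverse (otherwise one could cancel the pair and produce a smaller diagram).

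The first geometric input is that $C'(\lambda)$ directly controls the interior of $D$: any maximal common subpath of the boundaries of two distinct $2$-cells $\Pi_1, \Pi_2$ is labelled by a piece for the relations $r_1, r_2 \in \R$ labelling $\partial \Pi_1, \partial \Pi_2$, and hence has length strictly less than $\lambda \min(|r_1|, |r_2|)$. In particular, each internal arc contributes less than a $\lambda$-fraction of the perimeter of either adjacent $2$-cell.

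The heart of the argument is a combinatorial analysis of the planar $2$-complex $D$. Combining the pieces bound above with Euler's formula applied to the dual graph of $D$ together with a degree count for interior vertices, one obtains the existence of at least one $2$-cell $\Pi$ — a \emph{Greendlinger face} — whose boundary $\partial\Pi$ meets the outer boundary $\partial D$ in a single subarc of length strictly greater than $(1-3\lambda)|\partial\Pi|$. The label of this arc is simultaneously a subword of $w$ (because it lies along $\partial D$) and a subword of the cyclic permutation of the relation $r \in \R$ labelling $\partial \Pi$, which produces the desired subword $s$. The case where $D$ fails to be simply connected or contains cut vertices is dispatched by decomposing $D$ into simply connected components along the cut vertices and arguing inductively on the number of $2$-cells.

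The main obstacle is a bookkeeping matter: extracting the sharp constant $(1-3\lambda)$ from the Euler-characteristic and piece-counting estimates rather than some weaker bound, since one must be careful about multiple Greendlinger faces and about what happens at vertices lying on $\partial D$. Because the lemma is explicitly flagged as a simplified version of Lyndon--Schupp's \cite[Theorem 4.4]{LS}, I would simply cite that reference for the sharp estimate rather than reproduce the detailed Euler computation, and then explain how the stated form follows as a corollary.
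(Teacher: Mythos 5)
Your proposal is correct and takes essentially the same approach as the paper: the paper gives no proof of this lemma and simply cites Lyndon--Schupp \cite[Theorem 4.4]{LS}, which is exactly what you do after sketching the standard van Kampen diagram argument. Your outline (reduced diagram, interior arcs are pieces, Euler-characteristic count producing a Greendlinger face) is the correct underlying mechanism, and deferring the sharp constant to the reference is appropriate.
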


\begin{rem}
Note that in Lemma \ref{lem:Green}, $w$ is reduced but doesn't need to be cyclically reduced, and $w$ itself, not some cyclic permutation of $w$, contains $s$.
\end{rem}

Before stating more results, we prepare terminologies and notations. Our definition of graphs and Cayley graphs follows Serre. Therefore, we allow loops and multiple edges by default. A positive edge set of a graph means a set of edges where an orientation is assigned to each edge. Readers are referred to \cite[Section 2.1]{Ser03} for details.

\begin{defn}\label{def:Cayley}
Suppose $G=\la X \mid \R \ra$ is a $C'(\lambda)$-group with $\lambda > 0$. The \emph{Cayley graph} $\Gamma$ of $G$ (with respect to $X$) is a graph whose vertex set $V(\Gamma)$ is $G$ and positive edge set $E^+(\Gamma)$ is $G \times X$. Each positive edge $(g,s) \in G \times X$ goes from $g$ to $gs$ and has a label $\L((g,s))=s$. The inverse edge $\overline{(g,s)}$ of $(g,s)$ has a label $\L(\overline{(g,s)})=s^{-1}$. We denote by $\E$ the set of undirected edges of $\Gamma$. (Note $\E$ has a bijective correspondence with $E^+(\Gamma)$.) A \emph{path} $p$ in $\Gamma$ is a graph homomorphism (ignoring directions and labels of edges of $\Gamma$) from $[0,n]$ to $\Gamma$ where $n \in \NN \cup \{0\}$ and $[0,n]$ is considered as a graph with $n$ edges.  By abuse of notation, we also denote by $p$ the image of a path $p$, which is a subgraph of $\Gamma$. A \emph{loop} is a path $p$ such that $p(0)=p(n)$. For a path $p$, its label $\L(p)$ is defined by $\L(p)=\L(p[0,1])\L(p[1,2])\cdots\L(p[n-1,n])$ and we denote 
\[
p_- = p(0) {\rm ~~~and~~~} p_+ = p(n).
\]
A path $p$ is called \emph{simple}, if we have $p(k)\neq p(\ell)$ unless $|k-\ell|=0,n$. A path is called an \emph{arc}, if it's injective. If $p$ is an arc and $g,h\in V(\Gamma)$ are two vertices on $p$ satisfying $g=p(k)$, $h=p(\ell)$, and $k \le \ell$, then we denote the subpath $p|_{[k,l]}$ by $p_{[g,h]}$. A \emph{contour} is an image of a loop $r$ such that $\L(r)\in \R$. Because $\R$ is symmetric, the definition of contours doesn't depend on the initial point and direction of $r$. That is, if a loop $r \colon [0,n] \to \Gamma$ satisfies $\L(r) \in \R$, then the loops $p_k, p^{-1} \colon [0,n] \to \Gamma$, where $k \in \NN$, defined by $p_k(i) = p(i+k~\text{mod}~n)$ and $p^{-1}(i) = p(n-i)$ also satisfy $\L(p_k), \L(p^{-1}) \in \R$. We denote the set of all contours by $\C$.
\end{defn}

\begin{rem}
\label{rem:without inversion}
By our definition of the Cayley graph, $G$ acts on $\Gamma$ without inversion of edges.
\end{rem}

\begin{rem}
\label{rem:simple}
When $\lambda \le \frac{1}{6}$, any contour is a simple loop (i.e. it has no self-intersection) by Greendlinger's lemma. The set $\C$ of contours can be considered as a subset of the power set $2^{\E}$ of $\E$. Also, $G$ acts naturally on $\C$.
\end{rem}

\begin{defn}\label{def:E,V}
For any subgraph $Y$ of $\Gamma$, we define $\E(Y)$ to be the set of all undirected edges that are contained in $Y$, and $\V(Y)$ to be the set of all vertices of $Y$.
\end{defn}

\begin{rem}\label{rem:E}
Note $\E=\E(\Gamma)$ in Definition \ref{def:Cayley}. By abuse of notation and to simplify notations, for a subgraph $Y$ of $\Gamma$ (e.g. paths, contours, or their intersections), we often denote the cardinality of $\E(Y)$ by $|Y|$ (i.e. $|Y|=|\E(Y)|$).
\end{rem}

The following is an immediate corollary of Greendlinger's lemma.

\begin{cor}
\label{cor:Green}
Suppose that $G=\la X \mid \R \ra$ is a $C'(\lambda)$-group with $\lambda \le \frac{1}{6}$ and $\Gamma$ is its Cayley graph. If $p$ is a loop in $\Gamma$ such that $\L(p) \neq_{F(X)} 1$, then there exists a contour $r$ such that
\[
|p \cap r| > (1-3\lambda)|r|.
\]
If, in addition, $\L(p)$ is reduced, then there exists a subpath $s$ of $p$ such that $s \subset p \cap r$ and $|s| > (1-3\lambda)|r|$.
\end{cor}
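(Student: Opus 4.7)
The plan is to apply Greendlinger's lemma (Lemma \ref{lem:Green}) to the label of $p$, reducing it in $F(X)$ first if necessary, and then translate the subword that Greendlinger gives back into a subpath of $p$ that sits inside a contour.

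For the second (cleaner) statement first: assume $\L(p)$ is reduced. Since $p$ is a loop we have $\L(p) =_G 1$, so by hypothesis $\L(p) \in F(X) \setminus \{1\}$ satisfies the assumption of Lemma \ref{lem:Green}. This yields a subword $u$ of $\L(p)$ which is also a subword of some relation $r\in \R$ with $|u|>(1-3\lambda)|r|$. Geometrically, $u$ corresponds to a subpath $s$ of $p$ with $\L(s)=u$. By extending $s$ on one side we trace out a loop in $\Gamma$ whose label is exactly $r$, i.e.\ a contour; call it $\widetilde{r}\in\C$. Thus $s$ is a subpath of $\widetilde{r}$. By Remark \ref{rem:simple} the contour $\widetilde{r}$ is simple, so any strict subpath of it is an arc; hence the $|u|$ edges traversed by $s$ are pairwise distinct, giving $|\E(s)|=|u|$. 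Therefore $\E(s)\subseteq \E(p)\cap \E(\widetilde{r})$, so
\[
|p\cap \widetilde{r}| \ge |\E(s)| = |u| > (1-3\lambda)|\widetilde{r}|,
\]
and by construction $s\subset p\cap \widetilde{r}$, proving the second statement.

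For the first statement, drop the assumption that $\L(p)$ is reduced. Reduce $\L(p)$ in $F(X)$ by iteratively removing backtracks $xx^{-1}$; each such elementary step corresponds geometrically to deleting a pair of consecutively-reversed edge traversals from the loop $p$. This produces a loop $p'$ with the same endpoints as $p$, whose label $w=\L(p')$ is reduced and satisfies $w\ne_{F(X)} 1$ by assumption and $w=_G 1$ since $p'$ is still a loop. Crucially, each reduction step only removes edge traversals, so $\E(p')\subseteq \E(p)$. Now apply the reduced case to $p'$ to obtain a contour $\widetilde{r}$ with $|p'\cap \widetilde{r}|>(1-3\lambda)|\widetilde{r}|$, and conclude
\[
|p\cap \widetilde{r}| \ge |p'\cap \widetilde{r}| > (1-3\lambda)|\widetilde{r}|.
\]

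There is no real obstacle here; the only point that requires slight care is verifying that the subpath $s$ produced from Greendlinger's lemma uses $|u|$ distinct undirected edges, which is where the simpleness of contours (from $\lambda\le \frac16$) enters, and the observation that free reduction can only delete, never add, edge traversals from a loop in $\Gamma$.
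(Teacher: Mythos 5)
Your proof is correct and takes the same approach as the paper: reduce $\L(p)$ (if necessary), apply Greendlinger's lemma, and translate the resulting subword into a subpath of a contour. The paper's proof is stated very tersely and leaves all the geometric bookkeeping implicit; you have simply spelled out those details (distinctness of edges via simplicity of contours, $\E(p')\subseteq\E(p)$ under free reduction), and they are all correct.
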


\begin{proof}
The reduced word of $\L(p)$ is not 1 in $F(X)$ since $\L(p) \neq_{F(X)} 1$, hence we can apply Lemma \ref{lem:Green} to the reduced word. If $\L(p)$ is already reduced, then apply Lemma \ref{lem:Green} directly to $\L(p)$.
\end{proof}

\begin{rem}
We will often consider loops as a map from a simple loop to $\Gamma$ in order not to fix a starting point of a path.
\end{rem}

Lemma \ref{lem:AD} (1), (2), (3) is \cite[Lemma 4.2]{AD} and (4) is an immediate corollary of (1) and (2).
\begin{lem}
\label{lem:AD}
Suppose that $G=\la X \mid \R \ra$ is a $C'(\lambda)$-group with $\lambda \le \frac{1}{8}$ and $\Gamma$ is its Cayley graph. Let $r$ be a contour and $v,w$ be two vertices on $r$.
\begin{itemize}
    \item[(1)]
    If $s$ is a subpath of $r$ connecting $v$ and $w$ and satisfies $|s|<\frac{1}{2}|r|$, then $s$ is a unique geodesic path in $\Gamma$ from $v$ to $w$.
    
    \item[(2)]
    If $s_1$ and $s_2$ are two distinct subpaths of $r$ connecting $v$ and $w$ and satisfy $|s_1|=|s_2|=\frac{1}{2}|r|$, then $s_1$ and $s_2$ are the only geodesic paths in $\Gamma$ from $v$ to $w$.
    
    \item[(3)]
    For any contour $r$ and any geodesic path $p$, $r\cap p$ is an image of one arc, if it's nonempty.
    
    \item[(4)]
    For any two distinct contours $r_1$ and $r_2$, $r_1 \cap r_2$ is an image of one arc, if it's nonempty.
\end{itemize}
\end{lem}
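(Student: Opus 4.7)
The plan is to prove (1) and (2) first as the substantive statements, then derive (3) and (4) as relatively short consequences. For (3), if $r\cap p$ had two or more connected components, I would take a closest pair along $p$, meeting $r$ at $x$ and $y$; the subpath $p_{[x,y]}$ is a geodesic between two vertices of $r$, so by (1) or (2) it coincides with a subpath of $r$, merging the two components into one, a contradiction. Statement (4) follows similarly: for distinct contours $r_1, r_2$ and a putative pair of components of $r_1 \cap r_2$ containing vertices $x, y$ respectively, the shorter subpath of $r_1$ from $x$ to $y$ and that of $r_2$ are both geodesics by (1)/(2), hence equal by uniqueness, placing $x, y$ in the same component.

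For (1) and (2), the approach is proof by contradiction via Corollary \ref{cor:Green}. Suppose $s'$ is a path from $v$ to $w$ with $s'\neq s$ and $|s'|\le|s|$; without loss of generality $s'$ is a geodesic. Let $t$ be the subpath of $r$ complementary to $s$, so $|t|=|r|-|s|\ge|r|/2\ge|s|\ge|s'|$. Form the loop $\ell:=s'\cdot t^{-1}$. Since $\L(s')$ and $\L(t)$ are distinct reduced words (they have different lengths in case (1); in case (2) equality would force $s'$ to equal $s_1$ or $s_2$), $\L(\ell)\neq_{F(X)} 1$, so Corollary \ref{cor:Green} yields a contour $r''$ with $|\ell\cap r''|>(1-3\lambda)|r''|$. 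If $r''\neq r$, the $C'(\lambda)$-condition bounds any common subpath of $r$ and $r''$ by $\lambda|r''|$, so $|t\cap r''|<\lambda|r''|$ and hence
\[
|s'\cap r''|>(1-3\lambda)|r''|-\lambda|r''|=(1-4\lambda)|r''|>\tfrac{1}{2}|r''|,
\]
using $\lambda\le\frac{1}{8}$; an arc of $s'$ lying on $r''$ of length $>|r''|/2$ then admits a strictly shorter replacement by the complementary half of $r''$, contradicting geodesicity of $s'$. If $r''=r$, then $\ell\cap r = t\cup(s'\cap s)$ gives $|s'\cap s|>|s|-3\lambda|r|$, so $s'$ coincides with $s$ on almost all edges, which reduces the problem to the small disagreement subintervals, setting up an induction on $|r|$.

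The main obstacle is the $r''\neq r$ case, specifically justifying that $s'\cap r''$ contains a single arc of length $>|r''|/2$ (so that the shortening step is legitimate), rather than being spread among many short components along $r''$. This appears to require statement (3) applied to $r''$, suggesting that the cleanest proof is a simultaneous induction for (1), (2), and (3) on a well-chosen complexity parameter—most naturally the length of $r$ itself, or the area of a minimal van Kampen diagram for $\ell$. The $C'(\frac{1}{8})$ hypothesis (rather than the weaker $C'(\frac{1}{6})$ sufficient for Greendlinger's lemma) is exactly what yields the decisive inequality $1-4\lambda>\frac{1}{2}$.
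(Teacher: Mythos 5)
The paper does not actually prove (1)--(3): it cites them verbatim as \cite[Lemma 4.2]{AD}, and observes that (4) is an immediate corollary of (1) and (2). Your derivations of (3) and (4) from (1)--(2) are correct and are essentially the one-line argument the paper has in mind for (4): any geodesic between two vertices of $r_1\cap r_2$ must, by (1)/(2), be a subpath of $r_1$ and also a subpath of $r_2$, hence lies in $r_1\cap r_2$ and connects the two putative components. (Your phrasing ``equal by uniqueness'' needs a word in the boundary case $|s_1|=|s_2|=\tfrac12|r_1|$, but the fix is routine.)

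The attempted proof of (1) and (2), however, has a real gap, and it is not only the circularity with (3) that you flag. In the case $r''\ne r$ your inequality gives $|s'\cap r''|>(1-4\lambda)|r''|\ge\tfrac12|r''|$ as a count of edges, and you correctly worry that this must be realized as a single subarc of $s'$ before you can shorten. This part can in fact be patched without invoking (3): first cancel the common initial and terminal segments of $s'$ and $t$ so that $\L(\ell)$ is a reduced word, then apply the second sentence of Corollary~\ref{cor:Green}, which produces a single subpath $\sigma$ of $\ell$ lying on $r''$ with $|\sigma|>(1-3\lambda)|r''|$; since $\sigma$ straddles at most one of the two corners $v,w$, its intersection with $s'$ is one arc, of length $>(1-4\lambda)|r''|>\tfrac12|r''|$.

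The case $r''=r$ is where the approach genuinely breaks. Because $t\subseteq r$, the loop $\ell=s't^{-1}$ already contains an arc of $r$ of length $|t|=|r|-|s|\ge\tfrac12|r|$, so Greendlinger's conclusion $|\ell\cap r|>(1-3\lambda)|r|$ can be (and typically is) witnessed by $t$ together with a short piece of $s'$ on $r$, and no contradiction results. Sharpening $\lambda$ does not help: the threshold $1-4\lambda>\tfrac12$ only bites when $r''\ne r$. What is missing is exactly the reduced van Kampen diagram analysis used in \cite{AD} (the same machinery behind Theorem~\ref{thm:classification}): one looks at a reduced diagram for $\ell$ whose boundary is $s'$ together with an arc of $r$, and uses the structure of its $2$-cells, not a single application of Greendlinger's lemma, to force $s'$ to coincide with $s$. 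The induction you gesture at in your last sentence would have to be on the area of such a diagram, and carrying it out amounts to reproving that classification; as written, this case is open.
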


\begin{proof}[Proof of Lemma \ref{lem:AD} (4)]
    Let $(x,y)$ be a pair of vertices in $r_1 \cap r_2$ such that $d(x,y)$ is the maximum among all the pairs of vertices in $r_1 \cap r_2$. Let $s_1$ and $t_1$ (resp. $s_2$ and $t_2$) be the two distinct subpaths of $r_1$ (resp. $r_2$) connecting $x$ and $y$. Suppose $d(x,y) = \frac{1}{2}|r_1|$ for contradiction, then $s_1$ and $t_1$ become the only geodesic paths in $\Gamma$ from $x$ to $y$ by applying Lemma \ref{lem:AD} (2) to $r_1$. On the other hand, one of $s_2$ or $t_2$ is a geodesic path in $\Gamma$ from $x$ to $y$ by applying Lemma \ref{lem:AD} (1), (2) to $r_2$. Hence, $\{s_1,t_1\}\cap\{s_2,t_2\} \neq \emptyset$. This contradicts $C'(\lambda)$-condition by the assumption $d(x,y) = |s_1| = |t_1| = \frac{1}{2}|r_1|$. Hence, $d(x,y) < \frac{1}{2}|r_1|$. We assume $|s_1| < \frac{1}{2}|r_1|$ without loss of generality, then $s_1$ is a unique geodesic path in $\Gamma$ from $x$ to $y$ by Lemma \ref{lem:AD} (1). Arguing similarly for $r_2$, we may assume that $s_2$ is a unique geodesic path in $\Gamma$ from $x$ to $y$. Hence, $s_1 = s_2$. By $C'(\lambda)$-condition, we have $|s_1| = |s_2| < \lambda \min\{|r_1|,|r_2|\}$.

    We claim  $s_1 = s_2 = r_1 \cap r_2$. Indeed, suppose for contradiction that there exists a vertex $z$ in $t_1 \cap r_2$ with $z \notin \{x,y\}$. Applying the same argument as $(x,y)$ above to the pair $(x,z)$, there exists a subpath $u$ of $r_1$ that is a unique geodesic path from $z$ to $x$ and satisfies $|u| < \lambda |r_1|$. If $u \cap s_1 = \{x\}$, then we have $d(z,y) = |u|+|s_1| > d(x,y)$ by $|u|+|s_1| < 2\lambda|r_1| < \frac{1}{2}|r_1|$ and Lemma \ref{lem:AD} (1). This contradicts maximality of $d(x,y)$. If $s_1 \subset u$, then we have $d(z,y) = |u| > |s_1| = d(x,y)$, which again contradicts maximality of $d(x,y)$. Thus, $r_1 \cap r_2$ is the arc $s_1 (=s_2)$.
\end{proof}

\begin{rem}
\label{rem:AD}
The definition of $C'(\lambda)$-condition in \cite[Definition 3.2]{AD} puts an additional condition other than our Definition \ref{def:C'-condition}, but the proof of Lemma \ref{lem:AD} doesn't use the additional condition. Indeed, the proof is based on Greendinger's lemma.
\end{rem}

\begin{rem}\label{rem:r cap p}
For a contour $r$ and a geodesic $p$, $r\cap p$ is a subpath of $p$, if it's nonempty, by Lemma \ref{lem:AD} (3). Hence, by assigning to $r\cap p$ the same direction as $p$, we denote the two endpoints of $r\cap p$ by $(r\cap p)_-$ and $(r\cap p)_+$. Note they satisfy $d(p_-,(r\cap p)_-)\le d(p_-,(r\cap p)_+)$.
\end{rem}

Theorem \ref{thm:classification} is an immediate consequence of \cite[Theorem 43]{Gdela} when $\lambda \le \frac{1}{8}$.

\begin{figure}
  \begin{center}
 \hspace{0mm} 
\begingroup%
  \makeatletter%
  \providecommand\color[2][]{%
    \errmessage{(Inkscape) Color is used for the text in Inkscape, but the package 'color.sty' is not loaded}%
    \renewcommand\color[2][]{}%
  }%
  \providecommand\transparent[1]{%
    \errmessage{(Inkscape) Transparency is used (non-zero) for the text in Inkscape, but the package 'transparent.sty' is not loaded}%
    \renewcommand\transparent[1]{}%
  }%
  \providecommand\rotatebox[2]{#2}%
  \newcommand*\fsize{\dimexpr\f@size pt\relax}%
  \newcommand*\lineheight[1]{\fontsize{\fsize}{#1\fsize}\selectfont}%
  \ifx\svgwidth\undefined%
    \setlength{\unitlength}{251.12479921bp}%
    \ifx\svgscale\undefined%
      \relax%
    \else%
      \setlength{\unitlength}{\unitlength * \real{\svgscale}}%
    \fi%
  \else%
    \setlength{\unitlength}{\svgwidth}%
  \fi%
  \global\let\svgwidth\undefined%
  \global\let\svgscale\undefined%
  \makeatother%
  \begin{picture}(1,0.19513649)%
    \lineheight{1}%
    \setlength\tabcolsep{0pt}%
    \put(0,0){\includegraphics[width=\unitlength,page=1]{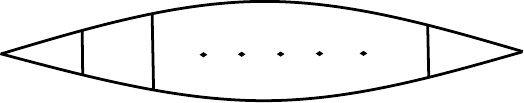}}%
    \put(0.06749901,0.17275164){\color[rgb]{0,0,0}\makebox(0,0)[lt]{\lineheight{1.25}\smash{\begin{tabular}[t]{l}$B$\end{tabular}}}}%
  \end{picture}%
\endgroup%

  \end{center}
   \vspace{-3mm}
  \caption{Simple geodesic bigon}
  \label{FigB}
\end{figure}

\begin{figure}
  \begin{center}
 \hspace{18mm} 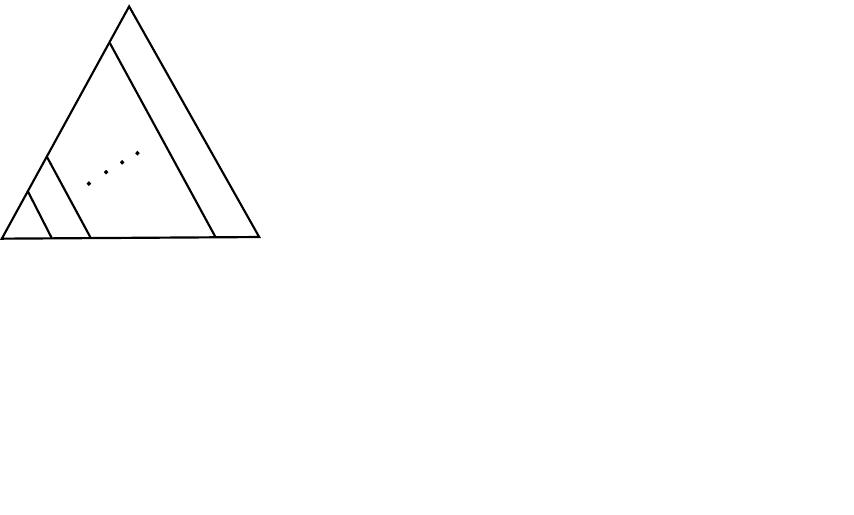
  \end{center}
  \vspace{-3mm}
  \caption{Simple geodesic triangles}
  \label{Fig2}
\end{figure}

\begin{thm}
\label{thm:classification}
Suppose that $G=\la X \mid \R \ra$ is a $C'(\lambda)$-group with $\lambda \le \frac{1}{8}$ and $\Gamma$ is its Cayley graph.
\begin{itemize}
    \item[(1)]
    If $\Delta$ is a reduced van Kampen diagram of a simple geodesic bigon with two distinct vertices in $\Gamma$ that has more than one 2-cell, then $\Delta$ has the form $B$ in Figure \ref{FigB}.
    
    \item[(2)]
    If $\Delta$ is a reduced van Kampen diagram of a simple geodesic triangle with three distinct vertices in $\Gamma$ that has more than one 2-cell, then $\Delta$ has one of the forms $T_1, T_2, T_3, T_4, T_5$ in Figure \ref{Fig2}.
\end{itemize}
\end{thm}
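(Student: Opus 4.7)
The plan is to derive Theorem \ref{thm:classification} as an immediate consequence of Strebel's classification of reduced van Kampen diagrams with geodesic boundary over $C'(\lambda)$-presentations in \cite[Appendix, Theorem 43]{Gdela}, and to record the short geometric reason why the cited result specializes cleanly at $\lambda \le \frac{1}{8}$ to the listed shapes. Rather than re-prove Strebel's enumeration, I would verify that the two inputs already available in this excerpt, namely Greendlinger's lemma (Lemma \ref{lem:Green} and Corollary \ref{cor:Green}) and the meeting lemma (Lemma \ref{lem:AD}), are what drive the classification.

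The geometric input I would isolate is the following \emph{corner-crossing} property. For any 2-cell $D$ in a reduced diagram $\Delta$ with geodesic boundary, with contour $r$, one locates via Corollary \ref{cor:Green} a subpath of $\partial D$ of length strictly greater than $(1-3\lambda)|r| \ge \frac{5}{8}|r|$ that lies on $\partial \Delta$. By Lemma \ref{lem:AD}(1), any subpath of $r$ that is also a geodesic has length at most $\frac{1}{2}|r|$; hence this long arc cannot be contained in a single geodesic side of $\partial \Delta$, and must therefore contain a corner vertex of $\partial \Delta$. Combined with Lemma \ref{lem:AD}(4), which says two distinct contours intersect in at most one arc, this forces any chain of adjacent 2-cells in $\Delta$ to share single short arcs and to be anchored at corners.

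Given this corner-crossing principle, the bigon case (1) is essentially forced: with only two corners $u,v$ and more than one 2-cell, one peels off a 2-cell at $u$, then a 2-cell at $v$, and then observes that any remaining cells must touch both geodesic sides simultaneously and be glued in a linear chain, yielding the ladder $B$. For the triangle case (2), the same principle gives each 2-cell at least one corner on its boundary arc, and the classification amounts to enumerating how chains of 2-cells can be hung from the three corners $u,v,w$ and how they may or may not meet in the interior. The distribution of corner-adjacencies leads to exactly the five combinatorial possibilities $T_1,\ldots,T_5$. The main obstacle is this triangle enumeration: showing it is both exhaustive (no further forms exist) and non-degenerate (each form is realizable), which is the content of Strebel's careful analysis in the cited appendix. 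I would import that enumeration directly rather than reproduce it, noting only that the sharper bound $\lambda \le \frac{1}{8}$ (rather than $\frac{1}{6}$) is what rules out more exotic configurations and pins the answer down to this short list.
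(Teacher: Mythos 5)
Your ultimate approach — defer to Strebel's classification in \cite[Appendix, Theorem 43]{Gdela} and note the specialization at $\lambda \le \frac{1}{8}$ — is exactly what the paper does; the paper gives no further argument, stating the theorem is "an immediate consequence" of that result.

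The supplementary heuristic you supply, however, contains a logical slip worth flagging. You invoke Corollary \ref{cor:Green} to claim that \emph{every} 2-cell $D$ in a reduced geodesic-boundary diagram has an arc on $\partial\Delta$ of length $>(1-3\lambda)|\partial D|$. But Greendlinger's lemma (and Corollary \ref{cor:Green}) is an \emph{existential} statement: given a nontrivial boundary loop, there exists \emph{some} 2-cell with such a long boundary arc. It does not immediately yield the universal statement you need for the "corner-crossing" argument, and indeed it fails as literally stated: in the ladder form $B$, the middle rungs do not have a single connected arc of length $>\tfrac{5}{8}|\partial D|$ on one geodesic side. Deducing that every 2-cell is a boundary cell with large exterior degree is precisely the nontrivial content of Strebel's combinatorial analysis (it rests on area/curvature counting, not a direct application of Greendlinger to $\partial\Delta$). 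Since you explicitly import that analysis anyway rather than reproducing it, your proof of the theorem stands as a citation, matching the paper; just be aware the motivating sketch as written would not survive if pressed for rigor.
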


\begin{rem}
The form $T_2$ in Figure \ref{Fig2} can occur. Indeed, take a simple geodesic bigon and add a third vertex in the middle of one geodesic path. This turns the bigon into a simple geodesic triangle of form $T_2$.
\end{rem}

\section{Construction of symmetric arrays}\label{sec:construction of arrays}
From Section \ref{sec:construction of arrays} up to the end of Remark \ref{rem:proper array}, suppose that $G=\la X \mid \R \ra$ is a finitely generated $C'(\lambda)$-group satisfying the following condition \eqref{eq:*} :
\begin{equation} \tag{$\ast$}\label{eq:*}
{\rm For ~any ~} r \in \R ,~|r| > \frac{1}{\lambda} {\rm ~and ~for ~any ~letter ~} x\in X, {\rm ~there ~exists ~} r\in \R {\rm ~such ~that ~}x\in r.
\end{equation}
Here, we define $x \in r \iff \text{the letter $x$ appears in the word $r$}$. The condition \eqref{eq:*} corresponds to that of Proposition \ref{prop:intro proper array}. We also suppose that $\Gamma$ is the Cayley graph of $G$ with respect to $X$ and fix two real numbers $\lambda,\mu$ by
\[
\lambda=\frac{1}{33} {\rm ~~~and~~~} \mu=\frac{4}{33}=4\lambda.
\]

We endow $\Gamma$ with its graph metric $d$.

\begin{rem}
\label{rem:relation}
The condition \eqref{eq:*} is not restrictive in proving Theorem \ref{thm:main}. Indeed, for any finitely generated $C'(\lambda)$-group $G=\la X \mid \R \ra$, define
\[
\R_1=\Big\{r\in \R \;\Big|\; |r| > \frac{1}{\lambda}\Big\}~~{\rm and}~~X_1=\{x\in X\mid x~{\rm appears~in~some}~r \in\R_1 \}.
\]
We also denote $\R_1^c=\R \sm \R_1$ and $X_1^c=X \sm X_1$. $\la X_1 \mid \R_1 \ra$ is a finitely generated $C'(\lambda)$-group satisfying \eqref{eq:*}. Note that for any $r \in \R_1^c$, any letter $x$ appearing in $r$ cannot be in $X_1$ by $C'(\lambda)$-condition and $|r|\le \frac{1}{\lambda}$, hence we have $x\in X_1^c$. Thus, the presentation $\la X_1^c \mid \R_1^c \ra$ is well-defined and it's a finitely presented $C'(\lambda)$-group, which is hyperbolic, hence bi-exact. Since we have
\[
G=\la X \mid \R \ra = \la X_1 \mid \R_1 \ra * \la X_1^c \mid \R_1^c \ra,
\]
$G$ is bi-exact if and only if $\la X_1 \mid \R_1 \ra$ is bi-exact by \cite[Theorem 1.1]{Oy}.
\end{rem}

Lemma \ref{lem:Fig3new} and \ref{lem:Fig4new} are immediate corollaries of Theorem \ref{thm:classification}.

\begin{lem}
\label{lem:Fig3new}
Suppose $g,h\in G$ and $p,q$ are geodesic paths in $\Gamma$ from $g$ to $h$. Then, the word $\L(qp^{-1})$ has a van Kampen diagram $\Delta$ of the form in Figure \ref{Fig3new}.
\end{lem}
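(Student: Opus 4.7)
The plan is to reduce to the simple-bigon case already handled by Theorem \ref{thm:classification}(1) by excising the portions along which $p$ and $q$ coincide. Parameterize both paths by arc-length as $p,q\colon[0,n]\to\Gamma$ with $p(0)=q(0)=g$ and $p(n)=q(n)=h$; the common length $n$ holds because both are geodesics. Since $d(g,p(k))=k=d(g,q(k))$, any equality $p(k)=q(\ell)$ of vertices forces $k=\ell$, so the agreement set $S:=\{i\in\{0,\dots,n\}:p(i)=q(i)\}$ is well defined, contains $0$ and $n$, and completely records where the two paths meet. Listing $S$ in order as $0=i_0<i_1<\dots<i_m=n$, for each consecutive pair $(i_{\ell-1},i_\ell)$ the sub-arcs $p|_{[i_{\ell-1},i_\ell]}$ and $q|_{[i_{\ell-1},i_\ell]}$ share only their endpoints, so they either coincide edgewise or form a simple geodesic bigon with two distinct endpoints.

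Next, for each non-coincident gap, invoke the van Kampen lemma to produce a reduced van Kampen diagram $\Delta_\ell$ for the corresponding bigon. Its boundary word is cyclically reduced: each side has a geodesic, hence reduced, label, and at each junction vertex the incident edges of $p$ and $q$ cannot share a label, else they would coincide as edges of $\Gamma$ and insert a new element of $S$ strictly between $i_{\ell-1}$ and $i_\ell$. The only potentially degenerate case is a length-two gap consisting of two parallel edges with distinct labels, but applying Greendlinger's lemma (Lemma \ref{lem:Green}) to such a loop would produce a contour $r$ with $|r|\le 2$, contradicting $|r|>1/\lambda=33$ from \eqref{eq:*}. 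Hence $\Delta_\ell$ has at least one $2$-cell, and Theorem \ref{thm:classification}(1) applies: either $\Delta_\ell$ is a single contour (in which case by Lemma \ref{lem:AD}(2) the two sides have equal length $|r|/2$), or it has the form $B$ of Figure \ref{FigB}.

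Finally, glue the diagrams $\Delta_1,\dots,\Delta_m$ along the shared sub-arcs corresponding to the coincident intervals of $p$ and $q$. The resulting reduced van Kampen diagram $\Delta$ has boundary label $\L(qp^{-1})$ and consists of a chain of lens-shaped pieces (single contours or $B$-diagrams) separated by common geodesic segments, precisely matching Figure \ref{Fig3new}. The main obstacle is the bookkeeping needed to ensure that each non-trivial gap is genuinely a simple bigon in the sense required by Theorem \ref{thm:classification}(1); this is immediate from the geodesic-parameterization observation together with the ruling out of the length-two case via \eqref{eq:*}, so no further small-cancellation input is needed.
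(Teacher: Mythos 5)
Your proposal is correct and follows essentially the same route as the paper: subdivide $p$ and $q$ by their common vertices (ordered by distance from $g$), observe that between consecutive common vertices the two sub-arcs either coincide or bound a simple geodesic bigon, and invoke Theorem~\ref{thm:classification}(1) on each bigon. You supply several details the paper's one-sentence proof leaves implicit — the observation that $p(k)=q(\ell)$ forces $k=\ell$, the reducedness of each bigon's boundary label, the exclusion of a degenerate two-edge bigon via Greendlinger's lemma and condition~\eqref{eq:*}, and the single-contour case not covered by Theorem~\ref{thm:classification}(1) — but these are bookkeeping refinements rather than a different argument.
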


\begin{proof}
Let $g=v_0,\cdots,v_n=h$ be vertices of $p\cap q$ such that $d(g,v_0)\le \cdots \le d(g,v_n)$. Then, for each $i\in \{1,\cdots,n\}$, either $p_{[v_{i-1},v_i]}=q_{[v_{i-1},v_i]}$ is satisfied or $q_{[v_{i-1},v_i]}(p_{[v_{i-1},v_i]})^{-1}$ is a simple geodesic bigon.
\end{proof}

\begin{lem}
\label{lem:Fig4new}
Suppose $g,h\in G$, $x\in X\cup X^{-1}$, $p$ is a geodesic path in $\Gamma$ form $g$ to $h$, and $q$ is a geodesic path in $\Gamma$ from $gx$ to $h$, $\e$ is the edge of $\Gamma$ from $g$ to $gx$ whose label is $x$. Then, the word $\L(\e qp^{-1})$ has a van Kampen diagram $\Delta$ of one of the forms in Figure \ref{Fig4new}.
\end{lem}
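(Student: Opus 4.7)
The plan is to mimic the proof of Lemma \ref{lem:Fig3new}, decomposing the loop $\e q p^{-1}$ along the common vertices of $p$ and $q$ and then handling the remaining triangular piece near $g, gx$ via Theorem \ref{thm:classification} (2). First I would let $v_0$ be the common vertex of $p$ and $q$ that is farthest from $h$; this is uniquely defined because distance to $h$ along either geodesic determines the vertex, and since $h \in V(p) \cap V(q)$ some such vertex exists. Enumerate $V(p) \cap V(q)$ as $v_0, v_1, \ldots, v_n = h$ in order of decreasing distance to $h$. For each $i \in \{1, \ldots, n\}$, either $p_{[v_{i-1}, v_i]} = q_{[v_{i-1}, v_i]}$ or the two subpaths form a simple geodesic bigon with two distinct vertices, to which Theorem \ref{thm:classification} (1) attaches a subdiagram of form $B$ (or just one $2$-cell).

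Next I would treat the remaining loop $\e \cdot q_{[gx, v_0]} \cdot p_{[g, v_0]}^{-1}$ by splitting into three subcases. If $v_0 = g$, then $g \in V(q)$; since $d(gx, g) = 1$ and $q$ is a geodesic, $g = q(1)$, and the first edge of $q$ has label $x^{-1}$. Condition \eqref{eq:*} together with Greendlinger's lemma forbids any non-trivial word of length $\le 2$ to equal $1$ in $G$, so no two distinct edges of $\Gamma$ share both endpoints; hence this first edge of $q$ coincides with $\e^{-1}$, the initial $\e \cdot \e^{-1}$ folds, and the loop reduces to the bigon $q_{[g, h]} \cdot p^{-1}$, handled by Lemma \ref{lem:Fig3new}. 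The subcase $v_0 = gx$ is symmetric: the first edge of $p$ equals $\e$, and the loop reduces (up to conjugation by $\e$) to the bigon $q \cdot p_{[gx, h]}^{-1}$. Otherwise $g, gx, v_0$ are three distinct vertices; the maximality of $v_0$ forces any common vertex of $p$ and $q$ to lie on the $v_0$-to-$h$ portion of each path, so $\e$, $q_{[gx, v_0]}$, and $p_{[g, v_0]}$ pairwise meet only at $\{g, gx, v_0\}$ and form a simple geodesic triangle. Theorem \ref{thm:classification} (2) then supplies a subdiagram of one of the forms $T_1, \ldots, T_5$, a single $2$-cell, or the trivial diagram.

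Finally I would glue the subdiagrams from the two steps along the vertices $v_0, \ldots, v_n$, producing the desired $\Delta$, which should match one of the forms in Figure \ref{Fig4new}. The main obstacle I anticipate lies in the degenerate subcases $v_0 \in \{g, gx\}$: I must carefully invoke \eqref{eq:*} and Greendlinger's lemma to rule out multi-edges in $\Gamma$, identify the offending edge of $p$ or $q$ with $\e$, and cleanly reduce to Lemma \ref{lem:Fig3new}. A secondary but more tedious point is the planar bookkeeping that confirms each combination of a triangular form $T_i$ with a chain of bigon subdiagrams of form $B$ indeed produces one of the forms drawn in Figure \ref{Fig4new}.
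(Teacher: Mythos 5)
Your proposal follows essentially the same route as the paper's proof: both split into the cases $gx\in p$, $g\in q$, and neither (your formulation via the farthest common vertex $v_0$ is equivalent), reduce the first two cases to the bigon Lemma~\ref{lem:Fig3new} after identifying the shared initial edge with $\e$, and in the third case peel off the simple geodesic triangle on $\{g,gx,v_0\}$ and append a chain of bigon pieces over $[v_0,h]$. The one small difference is that the paper pins the triangle down to form $T_1$ specifically (the side $\e$ is a single edge, which can touch at most one $2$-cell and so excludes the other configurations), whereas you leave $T_2,\dots,T_5$ open and defer the bookkeeping that the result matches Figure~\ref{Fig4new}; this is a minor imprecision, not a different approach.
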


\begin{proof}
If $gx\notin p$ and $g\notin q$, let $v\in G$ be the furthest vertex from $h$ in $p\cap q$, then $\e q_{[gx,v]}(p_{[g,v]})^{-1}$ is a simple geodesic triangle of the form $T_1$ in Figure \ref{Fig2}. Hence, $\Delta$ becomes form (a) with Lemma \ref{lem:Fig3new} applied to $q_{[v,h]}(p_{[v,h]})^{-1}$. If $gx\in p$, $\Delta$ becomes form (b) with Lemma \ref{lem:Fig3new} applied to $q(p_{[gx,h]})^{-1}$. If $g\in q$, $\Delta$ becomes form (c) with Lemma \ref{lem:Fig3new} applied to $q_{[g,h]}p^{-1}$.
\end{proof}

\begin{figure}
  \begin{center}
 \hspace{0mm} 
\begingroup%
  \makeatletter%
  \providecommand\color[2][]{%
    \errmessage{(Inkscape) Color is used for the text in Inkscape, but the package 'color.sty' is not loaded}%
    \renewcommand\color[2][]{}%
  }%
  \providecommand\transparent[1]{%
    \errmessage{(Inkscape) Transparency is used (non-zero) for the text in Inkscape, but the package 'transparent.sty' is not loaded}%
    \renewcommand\transparent[1]{}%
  }%
  \providecommand\rotatebox[2]{#2}%
  \newcommand*\fsize{\dimexpr\f@size pt\relax}%
  \newcommand*\lineheight[1]{\fontsize{\fsize}{#1\fsize}\selectfont}%
  \ifx\svgwidth\undefined%
    \setlength{\unitlength}{371.4395941bp}%
    \ifx\svgscale\undefined%
      \relax%
    \else%
      \setlength{\unitlength}{\unitlength * \real{\svgscale}}%
    \fi%
  \else%
    \setlength{\unitlength}{\svgwidth}%
  \fi%
  \global\let\svgwidth\undefined%
  \global\let\svgscale\undefined%
  \makeatother%
  \begin{picture}(1,0.12756895)%
    \lineheight{1}%
    \setlength\tabcolsep{0pt}%
    \put(0,0){\includegraphics[width=\unitlength,page=1]{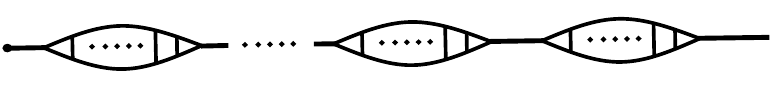}}%
    \put(0.9810558,0.10467597){\color[rgb]{0,0,0}\rotatebox{-2.75874555}{\makebox(0,0)[lt]{\lineheight{1.25}\smash{\begin{tabular}[t]{l}$h$\end{tabular}}}}}%
    \put(-0.0006702,0.09846191){\color[rgb]{0,0,0}\rotatebox{0.93355431}{\makebox(0,0)[lt]{\lineheight{1.25}\smash{\begin{tabular}[t]{l}$g$\end{tabular}}}}}%
    \put(0.46728753,0.00151305){\color[rgb]{0,0,0}\rotatebox{0.57586904}{\makebox(0,0)[lt]{\lineheight{1.25}\smash{\begin{tabular}[t]{l}$q$\end{tabular}}}}}%
    \put(0.46438983,0.1215185){\color[rgb]{0,0,0}\rotatebox{-0.47516799}{\makebox(0,0)[lt]{\lineheight{1.25}\smash{\begin{tabular}[t]{l}$p$\end{tabular}}}}}%
    \put(0,0){\includegraphics[width=\unitlength,page=2]{Fig3new.pdf}}%
  \end{picture}%
\endgroup%

  \end{center}
  \vspace{-3mm}
  \caption{Geodesic bigon}
  \label{Fig3new}
\end{figure}

\begin{figure}
  \begin{center}
 \hspace{18mm} 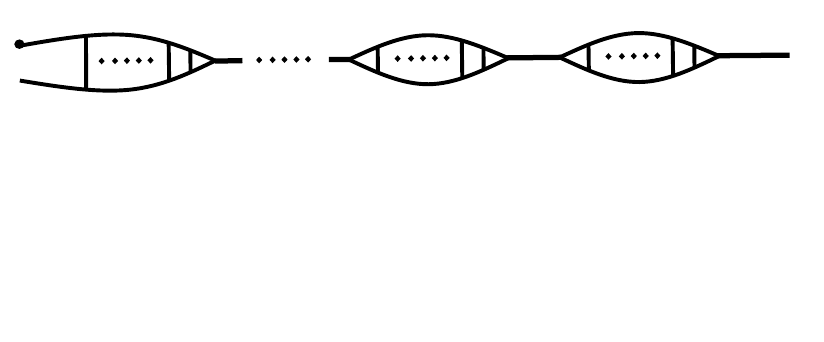
  \end{center}
  \vspace{-3mm}
  \caption{Geodesic triangles with one edge side}
  \label{Fig4new}
\end{figure}

\begin{rem}
\label{rem:CDelta}
In Lemma \ref{lem:Fig3new} and \ref{lem:Fig4new}, there exists a unique label preserving graph homomorphism from the van Kampen diagram $\Delta$ to the Cayley graph $\Gamma$ that sends the base point of $\Delta$ to $g$. We denote by $\C_\Delta$ the set of all contours that are images of some 2-cell of $\Delta$ by this graph homomorphism. Recall that $\C$ is the set of all contours.
\end{rem}

\begin{defn}
Let $p>0$. For $r \in \C$, we define $1_r \in \ell^p(\C)$ by $1_{r}(r')=1$ if $r'=r$ and $1_{r}(r')=0$ if $r'\in \C\sm\{r\}$. For $g\in G$ and $\xi=\sum_{r\in\C}c_r1_r\in \ell^p(\C)$ with $c_r\in\CC$ ($\forall\, r \in \C$), we define $\pi_\C(g)\xi \in \ell^p(\C)$ by
\[
\pi_\C(g)\xi = \sum_{r\in\C}c_r1_{gr}.
\]
This defines an isometric action $\pi_\C\colon G \act \ell^p(\C)$. Similarly, for $e \in \E$, we define $1_e \in \ell^p(\C)$ by $1_{e}(e')=1$ if $e'=e$ and $1_{e}(e')=0$ if $e'\in \E\sm\{e\}$. For $g\in G$ and $\eta=\sum_{e\in\E}c_e1_e\in \ell^p(\E)$ with $c_e\in\CC$ ($\forall\, e \in \E$), we define $\pi_\E(g)\eta \in \ell^p(\E)$ by
\[
\pi_\E(g)\eta=\sum_{e\in\E}c_e1_{ge}.
\]
This defines an isometric action $\pi_\E\colon G \act \ell^p(\E)$.
\end{defn}

\begin{rem}
We will deal with only $p=1,2$ in this paper.
\end{rem}

\subsection{First array}\label{subsec:first array}
The main goal of Section \ref{subsec:first array} is to define a function on $\C$ in Definition \ref{def:xi[g,h,psi]} and prove Proposition \ref{prop:xi array}. First of all, we assign a set of contours to a geodesic and to a pair of vertices of $\Gamma$, and define order on the set of contours. In doing so, we also study their basic properties which will be used everywhere after. This argument is up to Remark \ref{rem:order}.

\begin{defn}
For $g,h\in G$ we denote by $\G_{g,h}$ the set of all geodesic path in $\Gamma$ from $g$ to $h$.
\end{defn}

In Definition \ref{def:C_p,mu} below, $r \cap p$ is an arc by Theorem \ref{lem:AD} (4). Hence, the length $|r\cap p|$ is well-defined.

\begin{defn}\label{def:C_p,mu}
For $g,h\in G$, $p\in\G_{g,h}$, and $\mu'>2\lambda$, we define $\C_{p,\mu'}$ by
\[
\C_{p,\mu'}=\{ r\in\C \mid |r\cap p|\ge \mu'|r|\}.
\]
\end{defn}

For simplicity, we denote for $g,h\in G$ and $\mu'>0$,
\begin{equation}
\label{eq:all intersection}
    \C_{g,h,\mu'}=\bigcap_{p\in\G_{g,h}}\C_{p,\mu'}.
\end{equation}

We begin with simple observations.

\begin{lem}
\label{lem:two contours}
Suppose $g,h\in G$, $p\in\G_{g,h}$, and $r,s\in \C_{p,\lambda}$ with $r\neq s$. Then, neither of $r\cap p\subset s\cap p$ nor $s\cap p\subset r\cap p$ occurs.
\end{lem}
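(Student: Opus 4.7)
The plan is to derive a contradiction from either containment via the $C'(\lambda)$-condition applied to the pair $(r,s)$, after first bounding $|r\cap s|$.

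First I would note that by Lemma \ref{lem:AD}(4), $r\cap s$ is either empty or the image of a single arc. To bound $|r\cap s|$ when it is nonempty, I pick parameterizations of $r$ and $s$ as loops that both start at the same endpoint of the arc $r\cap s$ and traverse this arc in the same direction. The resulting labels $u,v\in\R$ then share the label of $r\cap s$ as a common prefix. If $u=v$ in $F(X)$, the two loops in $\Gamma$ starting at the same vertex and reading the same word coincide as subgraphs, contradicting $r\neq s$. Hence $u\neq v$ in $F(X)$, and the $C'(\lambda)$-condition (Definition \ref{def:C'-condition}) applied to this piece gives
\[
|r\cap s|<\lambda\min\{|u|,|v|\}=\lambda\min\{|r|,|s|\}.
\]

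Next, suppose for contradiction that $r\cap p\subset s\cap p$. Every edge in $r\cap p$ then lies in both $r$ and $s$, hence in $r\cap s$, so (using the notational convention from Remark \ref{rem:E})
\[
|r\cap p|\le|r\cap s|<\lambda|r|.
\]
This contradicts the hypothesis $r\in\C_{p,\lambda}$, which requires $|r\cap p|\ge\lambda|r|$. Swapping the roles of $r$ and $s$ rules out the containment $s\cap p\subset r\cap p$ in exactly the same way.

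The only delicate point I would take care to spell out is the $u=v$ case: one has to invoke the fact that a path in the Cayley graph is uniquely determined by its starting vertex and its label, so that two distinct contours sharing an arc cannot give the same element of $F(X)$ under any synchronized parameterization. Everything else is a direct combinatorial consequence of the $C'(\lambda)$-condition and Lemma \ref{lem:AD}(4), with no further geometric input required.
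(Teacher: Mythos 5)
Your proof is correct and follows essentially the same route as the paper's: derive $r\cap p\subset r\cap s$ from the supposed containment, compare $\lambda|r|\le|r\cap p|\le|r\cap s|$ against the $C'(\lambda)$ bound $|r\cap s|<\lambda|r|$, and conclude by contradiction. The only difference is that you spell out the translation from Definition~\ref{def:C'-condition} (a condition on words and pieces) to the geometric inequality $|r\cap s|<\lambda\min\{|r|,|s|\}$ for distinct contours, including the use of Lemma~\ref{lem:AD}(4) and the check that the synchronized labels cannot coincide in $F(X)$; the paper compresses all of this into the single phrase ``by $C'(\lambda)$-condition.''
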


\begin{proof}
Suppose $r\cap p\subset s\cap p$ for contradiction, then $r\cap p \subset r\cap s$. This implies $\lambda|r| \le |r\cap p| \le |r\cap s|$ since $r\in \C_{p,\lambda}$. This contradicts $r\neq s$ by $C'(\lambda)$-condition. Thus, $r\cap p \nsubset s\cap p$. Similarly, $s\cap p \nsubset r\cap p$ follows from $s\in \C_{p,\lambda}$.
\end{proof}

\begin{rem}
\label{rem:two contours}
By Lemma \ref{lem:two contours}, for any $g,h\in G$, $p\in\G_{g,h}$, and $r,s\in \C_{p,\lambda}$ with $r\neq s$, one of the two cases below holds (cf. Remark \ref{rem:r cap p}).
\begin{itemize}
    \item
    $d(g,(r\cap p)_-)< d(g,(s\cap p)_-)$ and $d(g,(r\cap p)_+)< d(g,(s\cap p)_+)$.
    \item
    $d(g,(s\cap p)_-)< d(g,(r\cap p)_-)$ and $d(g,(s\cap p)_+)< d(g,(r\cap p)_+)$.
\end{itemize}
\end{rem}

\begin{cor}
For any $g,h\in G$ and $p\in\G_{g,h}$, $\C_{p,\lambda}$ is finite.
\end{cor}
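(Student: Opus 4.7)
The plan is to exhibit an injection from $\C_{p,\lambda}$ into the finite vertex set $\V(p)$ of the geodesic $p$, from which finiteness is immediate. The idea is that the ordering structure already extracted in Remark \ref{rem:two contours} does essentially all the work.

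First I would check that the assignment $r\mapsto (r\cap p)_+$ is a well-defined map $\C_{p,\lambda}\to \V(p)$. For any $r\in\C_{p,\lambda}$, the standing assumption \eqref{eq:*} gives $|r|>1/\lambda$, so the defining inequality of $\C_{p,\lambda}$ yields $|r\cap p|\ge \lambda|r|>1$; in particular $r\cap p$ is nonempty. Lemma \ref{lem:AD}(3) then identifies $r\cap p$ with (the image of) a single arc that is a subpath of $p$, and Remark \ref{rem:r cap p} assigns it a well-defined endpoint $(r\cap p)_+\in \V(p)$ using the orientation inherited from $p$.

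For injectivity, let $r,s\in \C_{p,\lambda}$ with $r\neq s$. Since $\lambda\le\lambda$, both contours also lie in $\C_{p,\lambda}$ in the sense of Remark \ref{rem:two contours}, so exactly one of the two strictly ordered alternatives listed there holds. In either alternative one has the strict inequality $d(g,(r\cap p)_+)\neq d(g,(s\cap p)_+)$, so $(r\cap p)_+\neq (s\cap p)_+$. Thus $r\mapsto (r\cap p)_+$ is injective, and
\[
|\C_{p,\lambda}|\;\le\;|\V(p)|\;=\;|p|+1\;<\;\infty.
\]

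There is no real obstacle: the dichotomy in Remark \ref{rem:two contours} is already the hard input, and the $C'(\lambda)$-condition together with \eqref{eq:*} only needs to be invoked to guarantee $r\cap p\neq \emptyset$ so that the endpoint $(r\cap p)_+$ even makes sense. If one wanted a cleaner-looking statement it would be natural to record the explicit bound $|\C_{p,\lambda}|\le |p|+1$, although it is not needed in what follows.
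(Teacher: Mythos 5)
Your proof is correct and follows the same route as the paper's one-line proof: the paper injects $\C_{p,\lambda}$ into $\{0,\dots,d(g,h)\}$ via $r\mapsto d(g,(r\cap p)_-)$, while you inject into $\V(p)$ via $r\mapsto (r\cap p)_+$; since $p$ is a geodesic these are the same map up to reparametrization and a choice of endpoint, and both rely solely on Remark \ref{rem:two contours} for injectivity. The extra check that $r\cap p\neq\emptyset$ (so that the endpoint makes sense) is a reasonable bit of diligence, though $|r\cap p|\ge\lambda|r|>0$ already forces $|r\cap p|\ge 1$ without invoking \eqref{eq:*}.
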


\begin{proof}
The map $\C_{p,\lambda}\ni r \mapsto d(g,(r\cap p)_-) \in \{0,\cdots,d(g,h)\}$ is injective by Remark \ref{rem:two contours}.
\end{proof}

\begin{defn}
Suppose $g,h\in G$ and $p\in \G_{g,h}$. For a subset $\A\subset \C_{p,\lambda}$, we define an order $\le_p$ by defining for $r,s\in\A$,
\[
r\le_p s \iff d(g,(r\cap p)_-) \le d(g,(s\cap p)_-).
\]
\end{defn}

\begin{rem}
The order $\le_p$ is well-defined by Remark \ref{rem:two contours}, that is, $(r\le_p s) \wedge (s\le_p r)$ implies $r=s$. Also, this order is the same as defining
\[
r\le_p s \iff d((r\cap p)_+,h) \ge d((s\cap p)_+,h).
\]
This is used in the proof of Lemma \ref{lem:order independent}.
\end{rem}

Our next goal is to prove Lemma \ref{lem:order independent} where we show that this order is independent of the choice of geodesic $p$. In Lemma \ref{lem:three contours} below and thereafter, given a geodesic $p$ in $\Gamma$ and $r, r' \in \C$, the subgraph $r \cap r' \cap p$ is a subpath of $p$, because $r \cap p$ and $r' \cap p$ are subpaths of $p$ by Lemma \ref{lem:AD} (3).

\begin{lem}
\label{lem:three contours}
Suppose $g,h\in G$, $p\in \G_{g,h}$, and $r_1,r_2,r_3\in \C_{p,2\lambda}$. If $r_1<_p r_2 <_p r_3$, then we have
\[
d(g,(r_1\cap p)_+) < d(g,(r_3\cap p)_-),
\]
in particular, $r_1\cap r_3\cap p=\emptyset$.
\end{lem}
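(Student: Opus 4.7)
The plan is to argue by contradiction, combining the lower bound on $|r_2 \cap p|$ supplied by $r_2 \in \C_{p,2\lambda}$ with two applications of the $C'(\lambda)$-condition, one to the pair $\{r_1,r_2\}$ of distinct contours and one to $\{r_2,r_3\}$.

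Abbreviate $a_i := (r_i \cap p)_-$ and $b_i := (r_i \cap p)_+$ for $i=1,2,3$. Since $\le_p$ admits the equivalent description in terms of the $+$-endpoints (noted in the remark immediately after its definition), the chain $r_1 <_p r_2 <_p r_3$ gives both $d(g,a_1)<d(g,a_2)<d(g,a_3)$ and $d(g,b_1)<d(g,b_2)<d(g,b_3)$. Suppose for contradiction that $d(g,b_1) \ge d(g,a_3)$. Then along $p$ the six vertices are ordered as $a_1 < a_2 < a_3 \le b_1 < b_2 < b_3$, so $(r_1\cap p)\cap(r_2\cap p)$ is the subpath of $p$ from $a_2$ to $b_1$, of length $d(g,b_1)-d(g,a_2)$, and $(r_2\cap p)\cap(r_3\cap p)$ is the subpath of $p$ from $a_3$ to $b_2$, of length $d(g,b_2)-d(g,a_3)$.

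Each of these subpaths is contained in the arc $r_i \cap r_j$ (which is an arc by Lemma~\ref{lem:AD}(4)); since $r_i,r_j$ are distinct contours, this arc is a piece between two relations and therefore has length $< \lambda\min(|r_i|,|r_j|) \le \lambda|r_2|$. Consequently,
\[
d(g,b_1)-d(g,a_2) < \lambda|r_2| \quad\text{and}\quad d(g,b_2)-d(g,a_3) < \lambda|r_2|.
\]
Adding these and using that $r_2\cap p$ is a subpath of $p$ from $a_2$ to $b_2$, whence $d(g,b_2)-d(g,a_2)=|r_2\cap p|\ge 2\lambda|r_2|$ by $r_2\in\C_{p,2\lambda}$, gives
\[
2\lambda|r_2| + \bigl(d(g,b_1)-d(g,a_3)\bigr) \le \bigl(d(g,b_2)-d(g,a_2)\bigr) + \bigl(d(g,b_1)-d(g,a_3)\bigr) < 2\lambda|r_2|,
\]
forcing $d(g,b_1) < d(g,a_3)$, a contradiction. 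The ``in particular'' conclusion is then immediate, since $r_1\cap p$ ends at $b_1$ while $r_3\cap p$ begins at $a_3$, now strictly further along $p$.

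The only delicate step is the passage from ``common subpath of $p$ contained in $r_i\cap r_j$'' to the piece bound $<\lambda|r_2|$: one must verify that the shared arc between two distinct contours genuinely corresponds to a piece between two distinct cyclic conjugates of relations in $\R$, so that the $C'(\lambda)$-condition applies. Granting this, the numerology is exactly what one wants: the factor $2$ picked up from summing two piece bounds is absorbed by the threshold $2\lambda$ built into $\C_{p,2\lambda}$, which is precisely why the hypothesis uses $2\lambda$ rather than $\lambda$.
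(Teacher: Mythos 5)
Your proof is correct, and it is essentially the paper's argument in lightly repackaged form. The paper derives the same two bounds
\[
d(g,(r_1\cap p)_+) < d(g,(r_2\cap p)_-) + \lambda|r_2|,
\qquad
d(g,(r_2\cap p)_+) - \lambda|r_2| < d(g,(r_3\cap p)_-),
\]
from the $C'(\lambda)$ overlap estimates, combines them with $d(g,(r_2\cap p)_-) + 2\lambda|r_2| \le d(g,(r_2\cap p)_+)$ exactly as you do, and simply chains the three inequalities directly rather than assuming $d(g,b_1)\ge d(g,a_3)$ and deriving a contradiction. Your detour through contradiction buys a concrete linear ordering of the six endpoints (so the two overlap subpaths can be named explicitly), but it is not needed: the chain works unconditionally because if the overlaps happen to be empty the bounds hold trivially. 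The ``delicate step'' you flag --- that an arc shared by two distinct contours is a piece --- is indeed silently used in the paper as well (it is the standard consequence of $\R$ being symmetric, so the shared arc is a common prefix of appropriate cyclic conjugates), so you are not missing anything; you are just more explicit about it.
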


\begin{proof}
We have $d(g,(r_1\cap p)_+) < d(g,(r_2\cap p)_+)$ by $r_1<_p r_2$ and Remark \ref{rem:two contours}. Also, $C'(\lambda)$-condition implies $|r_1 \cap r_2 \cap p|\le |r_1\cap r_2|<\lambda|r_2|$. Hence, we have
\[
d(g,(r_1\cap p)_+)< d(g,(r_2\cap p)_-)+\lambda|r_2|.
\]
By the same argument for $r_2$ and $r_3$, we can also see 
$d(g,(r_2\cap p)_+)-\lambda|r_2|< d(g,(r_3\cap p)_-)$. Note that $r_2\in \C_{p,2\lambda}$ implies $d(g,(r_2\cap p)_-)+2\lambda|r_2|\le d(g,(r_2\cap p)_+)$. Thus, we have
\[
d(g,(r_1\cap p)_+)
< d(g,(r_2\cap p)_-)+\lambda|r_2|
\le d(g,(r_2\cap p)_+)-\lambda|r_2|
< d(g,(r_3\cap p)_-).
\]
\end{proof}

\begin{lem}
\label{lem:CDelata large}
Suppose $g,h\in G$, $x\in X\cup X^{-1}\cup\{1\}$, $p\in \G_{g,h}$ and $q\in \G_{gx,h}$. When $x\in X\cup X^{-1}$, let $\e$ be the edge of $\Gamma$ from $g$ to $gx$ with $\L(\e)=x$ and $\Delta$ be a van Kampen diagram of $\L(\e qp^{-1})$, and when $x=1$, let $\Delta$ be a van Kampen diagram of $\L(qp^{-1})$. Then, we have (cf. Remark \ref{rem:CDelta} for $\C_\Delta$)
\[
\C_\Delta 
\subset \C_{p,\frac{1}{2}-2\lambda}
\cap \C_{q,\frac{1}{2}-2\lambda}.
\]
\end{lem}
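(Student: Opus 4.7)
The plan is to exploit the diagrammatic classification supplied by Theorem~\ref{thm:classification}. Reading off Figure~\ref{FigB} and Figure~\ref{Fig2}, every van Kampen diagram $\Delta$ produced by Lemma~\ref{lem:Fig3new} or Lemma~\ref{lem:Fig4new} has a bead-chain structure in which distinct $2$-cells meet at most at a single vertex, and in the triangle case the edge $\e$ lies on the boundary of at most one $2$-cell. For each $r\in\C_\Delta$ this yields the edge-disjoint decomposition
\[
\partial r \;=\; (r\cap p)\,\cup\,(r\cap q)\,\cup\,(\e\cap\partial r),
\]
where $\e\cap\partial r$ is either empty or is the single edge $\e$. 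In either case, $r\cap p$ and $r\cap q$ are connected subarcs of the geodesics $p$ and $q$, hence geodesic in $\Gamma$.

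First I would handle a generic $2$-cell $r$ with $\e\cap\partial r=\emptyset$. Then $r\cap p$ and $r\cap q$ are two geodesic arcs sharing both endpoints, so they have equal lengths, which forces $|r\cap p|=|r\cap q|=|r|/2$. This already comfortably exceeds $(\tfrac{1}{2}-2\lambda)|r|$, and it settles the entire bigon case of Lemma~\ref{lem:Fig3new} and every $2$-cell of Lemma~\ref{lem:Fig4new} not touching $\e$.

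Next I would treat the distinguished $2$-cell $r$ containing $\e$. Writing $\ell_p=|r\cap p|$ and $\ell_q=|r\cap q|$, the decomposition gives $|r|=\ell_p+\ell_q+1$. The arcs $r\cap p$ and $r\cap q$ are geodesic, and their endpoint-pairs differ only by the edge $\e$; so the triangle inequality together with $d(g,gx)=1$ forces $|\ell_p-\ell_q|\le 1$, whence $\ell_p,\ell_q\ge (|r|-2)/2$. Condition $(\ast)$ supplies $|r|>1/\lambda=33$, which is exactly what is needed for $(|r|-2)/2\ge(\tfrac{1}{2}-2\lambda)|r|$, and the desired containment follows.

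The principal obstacle is the first step: extracting from Theorem~\ref{thm:classification} the combinatorial fact that adjacent $2$-cells of $\Delta$ share only vertices, and that the edge $\e$ appears on the boundary of at most one $2$-cell. Once this reading of Figure~\ref{FigB} and Figure~\ref{Fig2} is in place, the length bounds reduce to the elementary geodesic comparisons sketched above.
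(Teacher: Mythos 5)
The key misreading is in your first step: in small cancellation van Kampen diagrams (forms $B$, $T_1,\dots,T_5$), adjacent $2$-cells do \emph{not} meet only at vertices. They share interior arcs (pieces), whose length is controlled by the $C'(\lambda)$-condition but is generically positive. Your edge-disjoint decomposition
\[
\partial r \;=\; (r\cap p)\cup(r\cap q)\cup(\e\cap\partial r)
\]
therefore omits up to two interior arcs, say $r_-$ and $r_+$, along which $r$ meets its neighboring $2$-cells in $\Delta$. The paper's own proof makes this explicit: it writes $r_-=[(r\cap p)_-,(r\cap q)_-]$ and distinguishes three possibilities for $r_-$, one of which (case (ii)) is precisely ``an intersection of $r$ with another contour in $\C_\Delta$''; it then bounds $|r_-|<\lambda|r|$ via $C'(\lambda)$, and similarly $|r_+|<\lambda|r|$, yielding $|r\cap p| > (1-\lambda-\tfrac12-\lambda)|r| = (\tfrac12-2\lambda)|r|$.

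This also explains why your conclusion $|r\cap p|=|r\cap q|=|r|/2$ is too strong: the lemma only claims $(\tfrac12-2\lambda)|r|$, and the loss of $2\lambda|r|$ is exactly what the two pieces $r_-,r_+$ cost. For a $2$-cell in the interior of the bead chain of form $B$, the two geodesic arcs $r\cap p$ and $r\cap q$ do \emph{not} share endpoints; their endpoints are separated by the pieces. Likewise, for the $2$-cell carrying $\e$, the identity $|r|=\ell_p+\ell_q+1$ omits the interior piece where that $2$-cell meets its neighbor, so the arithmetic there also fails. To repair the argument you would reintroduce the $r_-,r_+$ terms and bound each by $\lambda|r|$ via $C'(\lambda)$, which recovers the paper's proof.
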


\begin{proof}
Let $r\in \C_\Delta$. Since $q$ is geodesic, we have $|r\cap q|\le \frac{1}{2}|r|$. Let $r_-=[(r\cap p)_-,(r\cap q)_-]$ be the subpath of $r$ that doesn't contain $r\cap p$ or $r\cap q$, then $r_-$ is either (i) of length 0, i.e. $(r\cap p)_-=(r\cap q)_-$, (ii) an intersection of $r$ with another contour in $\C_\Delta$, or (iii) coincides $\e$. In any of the cases (i)(ii)(iii), we have $|r_-|<\lambda|r|$. Indeed, this follows from $C'(\lambda)$-condition in case (ii) and from the condition \eqref{eq:*} in case (iii) (Note $|\e|=1<\lambda|r|$). Similarly, letting $r_+=[(r\cap p)_+,(r\cap q)_+]$ be the subpath of $r$ that doesn't contain $r\cap p$ or $r\cap q$, we have $|r_+|<\lambda|r|$. Thus, we have
\[
|r\cap p|
=|r|-|r_-|-|r\cap q|-|r_+|
>\left( 1-\lambda-\frac{1}{2}-\lambda \right)|r|
=\left( \frac{1}{2}-2\lambda \right)|r|,
\]
hence $r\in \C_{p,\frac{1}{2}-2\lambda}$. Since $p$ is geodesic, $r\in \C_{q,\frac{1}{2}-2\lambda}$ also follows similarly.
\end{proof}

\begin{lem}
\label{lem:attached contours}
Suppose $g,h\in G$, $x\in X\cup X^{-1}\cup \{1\}$, $p\in \G_{g,h}$ and $q\in \G_{gx,h}$, then for any $\mu' \in [2\lambda,\frac{1}{2}]$, we have 
\[
\C_{p,\mu'} \subset \C_{q,\mu'-2\lambda}.
\]
\end{lem}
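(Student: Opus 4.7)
My plan is to show that $r\in\C_\Delta$ for a suitably chosen van Kampen diagram $\Delta$, and then invoke Lemma~\ref{lem:CDelata large} to transfer the large intersection from $p$ to $q$. Fix $\Delta$ to be the van Kampen diagram of $\L(\e qp^{-1})$ produced by Lemma~\ref{lem:Fig4new} when $x\in X\cup X^{-1}$, or of $\L(qp^{-1})$ from Lemma~\ref{lem:Fig3new} when $x=1$. For $r\in\C_{p,\mu'}$, set $\alpha:=r\cap p$; by Lemma~\ref{lem:AD}(3) this is an arc, and $|\alpha|\ge\mu'|r|\ge 2\lambda|r|$.

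The key claim is $r\in\C_\Delta$, which I prove by contradiction. Suppose not. Since $p$ lies in the boundary of $\Delta$, each edge of $\alpha$ lies on a unique $2$-cell of $\Delta$; listing these consecutively along $\alpha$ yields a sequence $R_{i_1},\dots,R_{i_k}\in\C_\Delta$ with $r\ne R_{i_j}$ for every $j$, so the $C'(\lambda)$-condition forces $|r\cap R_{i_j}|<\lambda|r|$ for each $j$. I split on $k$. If $k=1$, then $\alpha\subseteq r\cap R_{i_1}$ forces $2\lambda|r|\le|\alpha|<\lambda|r|$, which is impossible. If $k=2$, then $\alpha$ breaks at the joint vertex between $R_{i_1}\cap p$ and $R_{i_2}\cap p$ into sub-arcs $\alpha_1,\alpha_2$ with $|\alpha_j|\le|r\cap R_{i_j}|<\lambda|r|$, so $|\alpha|<2\lambda|r|$, contradicting $|\alpha|\ge\mu'|r|\ge 2\lambda|r|$. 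If $k\ge 3$, pick any $1<j<k$; then $R_{i_j}\cap p\subseteq\alpha\subseteq r$, so Lemma~\ref{lem:CDelata large} gives
\[
|r\cap R_{i_j}|\ge|R_{i_j}\cap p|\ge\left(\tfrac12-2\lambda\right)|R_{i_j}|,
\]
and since $\lambda=\tfrac{1}{33}<\tfrac16$ yields $\tfrac12-2\lambda>\lambda$, this again contradicts the $C'(\lambda)$-condition. Hence $r\in\C_\Delta$.

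With $r\in\C_\Delta$ in hand, Lemma~\ref{lem:CDelata large} directly gives $|r\cap q|\ge(\tfrac12-2\lambda)|r|\ge(\mu'-2\lambda)|r|$ using $\mu'\le\tfrac12$, so $r\in\C_{q,\mu'-2\lambda}$, as required. I expect the main obstacle to be the case $k\ge 3$: it is the one step where the quantitative bound $\lambda<\tfrac16$ is essential, and it depends on the applicability of Lemma~\ref{lem:CDelata large} to every 2-cell of the nice-form diagram, allowing the long arc $\alpha$ to fully swallow an interior 2-cell $R_{i_j}\cap p$ and thereby force a small-cancellation violation.
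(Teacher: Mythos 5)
Your proposal fails at the central claim ``$r\in\C_\Delta$ for every $r\in\C_{p,\mu'}$,'' which is false in general, and the paper has to treat the case $r\notin\C_\Delta$ as a genuine second case.

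The specific step that breaks is the sentence ``Since $p$ lies in the boundary of $\Delta$, each edge of $\alpha$ lies on a unique $2$-cell of $\Delta$.'' That is not true: by Lemma~\ref{lem:Fig3new} and Lemma~\ref{lem:Fig4new}, the diagram $\Delta$ has the chain form of Figures~\ref{Fig3new} and~\ref{Fig4new}, which contains sub-segments where $p$ and $q$ coincide and the boundary is a single arc not bordering any $2$-cell. A contour $r$ with $|r\cap p|\ge\mu'|r|$ can be ``attached'' to $p$ precisely along such a shared segment. The cleanest counterexample is $x=1$ with $q=p$: then $\Delta$ has no $2$-cells at all, so $\C_\Delta=\emptyset$, yet $\C_{p,\mu'}$ need not be empty, and every $r\in\C_{p,\mu'}$ has $r\notin\C_\Delta$. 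Your contradiction machinery never engages because there are no $R_{i_j}$ to list; and if $r\cap p$ only partially overlaps the bigon part of $\Delta$, the first edges of $\alpha$ fall outside all $2$-cells and your $k$-cases again do not cover the situation. (The ``interior $2$-cell'' argument for $k\ge 3$ is fine as far as it goes, but it is only ever triggered by the false premise.)

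The correct handling of $r\notin\C_\Delta$ is what the paper does: rather than trying to show $r\in\C_\Delta$, it bounds the defect $|r\cap p|-|(r\cap p)\cap q|$. Writing $s_-$ and $s_+$ for the nearest $2$-cells of $\C_\Delta$ on either side of $r$ along $p$, the part of $r\cap p$ that fails to lie on $q$ is contained in $(r\cap p)\cap(s_-\cap p)$, $(r\cap p)\cap(s_+\cap p)$, and possibly the single edge $\e$; each of the first two is a piece between the distinct contours $r$ and $s_\pm$, hence has length $<\lambda|r|$ by the $C'(\lambda)$-condition, and $|\e|=1<\lambda|r|$ by condition~\eqref{eq:*}. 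This yields $|r\cap q|\ge|(r\cap p)\cap q|>|r\cap p|-2\lambda|r|\ge(\mu'-2\lambda)|r|$, which is exactly the $\mu'-2\lambda$ loss in the statement. Your $r\in\C_\Delta$ branch is correct and coincides with the paper's first case; you need this second, additive-loss argument for the contours sitting along shared portions of $p$ and $q$.
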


\begin{proof}
When $x\in X\cup X^{-1}$, let $\e$ be the edge of $\Gamma$ from $g$ to $gx$ with $\L(\e)=x$ and $\Delta$ be a van Kampen diagram of $\L(\e qp^{-1})$, and when $x=1$, let $\Delta$ be a van Kampen diagram of $\L(qp^{-1})$. Let $r\in\C_{p,\mu'}$. If $r\in \C_\Delta$, then we have $r\in \C_{q,\frac{1}{2}-2\lambda}$ by Lemma \ref{lem:CDelata large}. Hence, $r\in \C_{q,\mu'-2\lambda}$ by $\mu' \le \frac{1}{2}$.

We assume $r\notin \C_\Delta$ in the following. We define $s_-=\max\{s\in\C_\Delta \mid s<_p r \}$ and $s_+=\min\{s\in\C_\Delta \mid r<_p s \}$. (By convention, we define $s_-,s_+=\emptyset$, if the set defining the maximum and minimum of the right hand side is empty.) (1) When $\Delta$ is the form of Figure \ref{Fig3new}, or the form (a) or (c) of Figure \ref{Fig4new}, we have
$p\sm q=\bigcup_{s\in\C_\Delta}s\cap p$, hence $(r\cap p)\sm q=(r\cap p)\cap \left(\bigcup_{s\in\C_\Delta}s\cap p\right)$. By Remark \ref{rem:two contours} and Lemma \ref{lem:three contours}, we have $(r\cap p)\cap \left(\bigcup_{s\in\C_\Delta}s\cap p\right)=(r \cap s_- \cap p)\sqcup (r \cap s_+ \cap p)$. Thus, by $C'(\lambda)$-condition, we have
\[
|r\cap p|
=|r \cap s_- \cap p|+|(r\cap p)\cap q|+|r\cap s_+ \cap p|
<\lambda|r|+|(r\cap p)\cap q|+\lambda|r|.
\]
This implies
\[
|r\cap q|\ge|(r\cap p)\cap q|>|r\cap p|-2\lambda|r|\ge(\mu'-2\lambda)|r|,
\]
by $r\in \C_{p,\mu'}$. Hence, $r\in \C_{q,\mu'-2\lambda}$. Here, $|r\cap q|>|(r\cap p)\cap q|$ can occur only when $\Delta$ is the form (c) of Figure \ref{Fig4new} and $(r\cap q)_-=gx$.

(2) When $\Delta$ is the form (b) of Figure \ref{Fig4new}, we have
$p\sm q=\e \cup\bigcup_{s\in\C_\Delta}s\cap p$. If $(r\cap p)_-\neq g$, then $|(r\cap p)\cap \e|=0$. This implies $(r\cap p)\sm q = (r\cap p)\cap \left(\bigcup_{s\in\C_\Delta}s\cap p\right)$. Hence, $r\in \C_{q,\mu'-2\lambda}$ follows by the same argument as (1). If $(r\cap p)_-= g$, then $s_-=\emptyset$ and $(r\cap p)\sm q=\e \cup (r\cap s)_+$. Hence, we have
\[
|r\cap p|
=|\e|+|(r\cap p)\cap q|+|r\cap s_+ \cap p|
<\lambda|r|+|(r\cap p)\cap q|+\lambda|r|.
\]
Here, we used $|\e|=1<\lambda|r|$ by condition \eqref{eq:*}. This implies $|r\cap q|=|(r\cap p)\cap q|>|r\cap p|-2\lambda|r|\ge(\mu'-2\lambda)|r|$, hence $r\in \C_{q,\mu'-2\lambda}$.
\end{proof}

\begin{rem}
\label{rem:attached contours}
In Lemma \ref{lem:attached contours}, its proof also shows that for any $r\in\C_{p,\mu'} \sm \C_\Delta$, we have
\[
d((r\cap q)_-,h)> d((r\cap p)_-,h)-\lambda|r|
~~{\rm and}~~
d((r\cap p)_+,h)+\lambda|r|> d((r\cap q)_+,h).
\]
\end{rem}

\begin{cor}
\label{cor:attached contours}
Suppose $g,h\in G$, $x\in X\cup X^{-1}\cup \{1\}$, and $p\in \G_{g,h}$. For any $\mu'\in [2\lambda,\frac{1}{2}]$, we have
\[
\C_{p,\mu'}\subset \C_{g,h,\mu'-2\lambda} \cap \C_{gx,h,\mu'-2\lambda}.
\]
In particular, we have 
$
\C_{g,h,\mu} \cup \C_{gx,h,\mu}
\subset
\C_{g,h,2\lambda} \cap \C_{gx,h,2\lambda}.
$
\end{cor}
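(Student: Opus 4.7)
The first inclusion is essentially a double application of Lemma \ref{lem:attached contours}. To show $\C_{p,\mu'} \subset \C_{g,h,\mu'-2\lambda}$, I would invoke Lemma \ref{lem:attached contours} with $x = 1$ (so that $gx = g$): for any $q \in \G_{g,h}$ we then get $\C_{p,\mu'} \subset \C_{q,\mu'-2\lambda}$, and intersecting over all such $q$ yields the containment by the definition in equation \eqref{eq:all intersection}. To show $\C_{p,\mu'} \subset \C_{gx,h,\mu'-2\lambda}$, I would apply Lemma \ref{lem:attached contours} directly with the given $x$: for any $q \in \G_{gx,h}$ we get $\C_{p,\mu'} \subset \C_{q,\mu'-2\lambda}$, and again intersect.

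For the ``in particular'' statement, recall that $\mu = 4\lambda$, so $\mu - 2\lambda = 2\lambda$ and $\mu \in [2\lambda, \tfrac{1}{2}]$ is in the admissible range. If $r \in \C_{g,h,\mu}$, then $r \in \C_{p,\mu}$ for some (indeed every) $p \in \G_{g,h}$, and the first inclusion with $\mu' = \mu$ immediately gives $r \in \C_{g,h,2\lambda} \cap \C_{gx,h,2\lambda}$.

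The case $r \in \C_{gx,h,\mu}$ requires a symmetry step, which I expect to be the only mildly subtle point. The idea is to relabel: set $g' = gx$ and $x' = x^{-1} \in X \cup X^{-1} \cup \{1\}$, so that $g'x' = g$. Then $r \in \C_{gx,h,\mu} = \C_{g',h,\mu}$ means $r \in \C_{p',\mu}$ for any $p' \in \G_{g',h}$, and the already-proved first inclusion applied to the triple $(g', x', h)$ yields
\[
r \in \C_{g',h,2\lambda} \cap \C_{g'x',h,2\lambda} = \C_{gx,h,2\lambda} \cap \C_{g,h,2\lambda},
\]
as required. Combining the two cases gives the claimed inclusion $\C_{g,h,\mu} \cup \C_{gx,h,\mu} \subset \C_{g,h,2\lambda} \cap \C_{gx,h,2\lambda}$, which concludes the proof.
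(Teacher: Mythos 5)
Your proposal is correct and follows the same route as the paper: derive the first inclusion from Lemma \ref{lem:attached contours} (once with $x=1$ and once with the given $x$) and intersect over geodesics, then deduce the second inclusion by applying the first with $\mu'=\mu$, monotonicity in $\mu'$, and the symmetric swap $(g',x')=(gx,x^{-1})$ for the $\C_{gx,h,\mu}$ case. The paper's proof is just a terser statement of this; no substantive difference.
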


\begin{proof}
The first statement follows directly from Lemma \ref{lem:attached contours}. The second statement follows from the first one and $2\lambda\le\mu-2\lambda$.
\end{proof}

\begin{lem}\label{lem:order independent}
Suppose $g,h\in G$, $x\in X\cup X^{-1}\cup \{1\}$, $p\in \G_{g,h}$ and $q\in \G_{gx,h}$. If $r,s\in \C_{p,2\lambda} \cap \C_{q,2\lambda}$, then
\[
r\le_p s \iff r\le_q s.
\]
\end{lem}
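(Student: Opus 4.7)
I argue by contradiction. Both $\le_p$ and $\le_q$ are total orders on our pair (by Remark \ref{rem:two contours}), so by symmetry (swap $(p,g)$ with $(q,gx)$ and replace $x$ by $x^{-1}$) it suffices to prove $r<_p s \Rightarrow r<_q s$. I therefore assume $r<_p s$ and suppose for contradiction that $s<_q r$. The first step is to extract two quantitative separation estimates. From $r<_p s$, $r,s\in\C_{p,2\lambda}$, and the $C'(\lambda)$-condition $|r\cap s|<\lambda\min\{|r|,|s|\}\le\lambda|r|$, the overlap of $r\cap p$ and $s\cap p$ on $p$ is bounded by $\lambda|r|$ while $|r\cap p|\ge 2\lambda|r|$; combining these yields
\[
d((r\cap p)_-,h) - d((s\cap p)_-,h) > \lambda|r|.
\]
The identical reasoning applied on $q$, using $r,s\in\C_{q,2\lambda}$ and the hypothetical $s<_q r$, gives
\[
d((s\cap q)_-,h) - d((r\cap q)_-,h) > \lambda|s|.
\]

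The second step fixes a van Kampen diagram $\Delta$ of $\L(\e q p^{-1})$ (or $\L(qp^{-1})$ if $x=1$) in the form of Lemma \ref{lem:Fig3new} or \ref{lem:Fig4new} and establishes the perturbation bound
\[
\bigl|\,d((t\cap p)_-,h) - d((t\cap q)_-,h)\,\bigr| < \lambda|t|
\]
for every $t\in\C_{p,2\lambda}\cap\C_{q,2\lambda}$. If $t\in\C_\Delta$, the vertices $(t\cap p)_-$ and $(t\cap q)_-$ both lie on the cycle $t$ and are joined along the ``left side'' of $t$ in $\Delta$, which is either a single vertex, the edge $\e$, or a subpath of $t\cap t'$ for the adjacent 2-cell $t'\in\C_\Delta$; in each case condition \eqref{eq:*} or the $C'(\lambda)$-condition bounds its length strictly below $\lambda|t|$, and this length dominates the graph distance between the two vertices. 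If $t\notin\C_\Delta$, Remark \ref{rem:attached contours} supplies one side of the bound, and the symmetric application of Lemma \ref{lem:attached contours} (with the roles of $p$ and $q$ exchanged) supplies the other.

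Finally, rearranging the sum of the two separation inequalities gives
\[
\bigl(d((r\cap p)_-,h) - d((r\cap q)_-,h)\bigr) - \bigl(d((s\cap p)_-,h) - d((s\cap q)_-,h)\bigr) > \lambda|r|+\lambda|s|,
\]
while the perturbation bound applied to $r$ and $s$ forces the left-hand side to be strictly less than $\lambda|r|+\lambda|s|$. This contradicts the assumption $s<_q r$, so $r<_q s$. The main obstacle is the perturbation bound in the case $t\in\C_\Delta$: it requires inspecting the diagram shapes of Figures \ref{Fig3new} and \ref{Fig4new} to identify the ``left side'' subarc of each 2-cell and to check that its length stays below $\lambda|t|$, which is delicate for the corner 2-cells adjacent to the edge $\e$ in the triangle forms arising from Lemma \ref{lem:Fig4new}.
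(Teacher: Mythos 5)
Your proof is correct, but it takes a genuinely different route from the paper's. The paper argues directly, splitting into cases according to whether $r$ and $s$ lie in $\C_\Delta$; in the mixed and non-diagram cases it introduces the $\le_p$-maximal element $r'\in\C_\Delta$ below $s$, shows $(r'\cap p)_+=(r'\cap q)_+$ via maximality, and pushes the comparison from $p$ to $q$ through a chain of inequalities anchored at $r'$ (cf.~\eqref{eq:pq}). You instead argue uniformly by contradiction: two quantitative separation estimates coming from $r<_p s$ and the supposed $s<_q r$ (each worth $\lambda|r|$ and $\lambda|s|$ respectively, using $C'(\lambda)$ and the $2\lambda$-thickness of the overlaps), combined with a perturbation bound $|d((t\cap p)_-,h)-d((t\cap q)_-,h)|<\lambda|t|$ for $t\in\{r,s\}$, force an impossible inequality. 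This avoids the case split and the maximality device, at the cost of having to establish the perturbation bound in both directions.

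On the two details you flag: the case $t\in\C_\Delta$ is not actually the delicate one, since the paper's proof of Lemma~\ref{lem:CDelata large} already shows the arc $t_-=[(t\cap p)_-,(t\cap q)_-]$ is a vertex, the edge $\e$, or a piece with an adjacent $2$-cell, hence has length $<\lambda|t|$ in all cases including the corner cells next to $\e$; your worry there is unfounded. The point that does deserve to be spelled out is $t\notin\C_\Delta$: Remark~\ref{rem:attached contours} literally gives only $d((t\cap p)_-,h)-d((t\cap q)_-,h)<\lambda|t|$, and the reverse inequality requires running the proof of Lemma~\ref{lem:attached contours} with $p$ and $q$ exchanged. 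This does work — the same van Kampen diagram $\Delta$, traversed in the opposite orientation, is a diagram for $\L(\e^{-1}pq^{-1})$, $\C_\Delta$ is unchanged, and the case analysis of Lemma~\ref{lem:attached contours} (with forms (b) and (c) of Figure~\ref{Fig4new} trading places) applies verbatim — but since the Remark as stated is asymmetric in $p$ and $q$, one should verify rather than wave at the symmetric application.
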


\begin{proof}
When $x\in X\cup X^{-1}$, let $\e$ be the edge of $\Gamma$ from $g$ to $gx$ with $\L(\e)=x$ and $\Delta$ be a van Kampen diagram of $\L(\e qp^{-1})$, and when $x=1$, let $\Delta$ be a van Kampen diagram of $\L(qp^{-1})$. If $r,s\in \C_\Delta$, then $r\le_p s \iff r\le_q s$ follows from the van Kampen diagram $\Delta$ (cf. Figure \ref{Fig3new},\ref{Fig4new}).

When $r\in \C_\Delta$, $s\notin \C_\Delta$, and $r<_p s$, we will show $r<_q s$. Let $r'\in \C_\Delta$ be the maximum contour in $(\C_\Delta,\le_p)$ satisfying $r'<_p s$. By maximality of $r'$, we have $(r'\cap p)_+=(r'\cap q)_+$. Indeed, suppose $(r'\cap p)_+\neq(r'\cap q)_+$, then by the van Kampen diagram $\Delta$, there exists $r''\in \C_\Delta$ such that $r'<_p r''$, $(r'\cap p)_+=(r''\cap p)_-$, and $(r'\cap q)_+=(r''\cap q)_-$. By Lemma \ref{lem:three contours}, $(r'\cap p)_+=(r''\cap p)_-$ implies $r''<_p s$, which contradicts maximality of $r'$. By $C'(\lambda)$-condition, we have $d((r'\cap p)_+,h)> d((s\cap p)_-,h)-\lambda|s|$. We also have $d((s\cap p)_-,h) \ge d((s\cap p)_+,h)+2\lambda|s|$ by $s\in \C_{p,2\lambda}$, and $d((s\cap p)_+,h)+\lambda|s|> d((s\cap q)_+,h)$ by Remark \ref{rem:attached contours}. Thus, we have
\begin{equation}
\label{eq:pq}
\begin{split}
    d((r'\cap q)_+,h)
=d((r'\cap p)_+,h)
&>d((s\cap p)_-,h)-\lambda|s| \\
&\ge d((s\cap p)_+,h)+2\lambda|s|-\lambda|s|
=d((s\cap p)_+,h)+\lambda|s| \\
&>d((s\cap q)_+,h).
\end{split}
\end{equation}
This implies
\[
d(gx,(r'\cap q)_+)
=d(gx,h)-d((r'\cap q)_+,h)
<d(gx,h)-d((s\cap q)_+,h)
=d(gx,(s\cap q)_+).
\]
Hence $r'<_q s$ by Remark \ref{rem:two contours}. As we saw above, we have $r<_q r'$ by $r,r'\in \C_\Delta$, thus, $r<_q r' <_q s$. Similarly, we can show that $r<_q s$ implies $r<_p s$. Hence, when $r\in \C_\Delta$ and $s\notin \C_\Delta$, we have $r<_p s \iff r<_q s$. By similar argument, we can also show $r<_p s \iff r<_q s$ when $r\notin \C_\Delta$ and $s\in \C_\Delta$.

Finally, when $r,s\notin \C_\Delta$, we will show $r<_p s$ implies $r<_q s$. If there exists $t\in\C_\Delta$ such that $r<_p t <_p s$, then we have $r<_q t <_q s$ by the above argument. If there is no $t\in\C_\Delta$ satisfying $r<_p t <_p s$, then we have $(r\cap p)_+=(r\cap q)_+$. Indeed, suppose $(r\cap p)_+\neq (r\cap q)_+$, then there exists $t\in \C_\Delta$ such that $r<_p t$, $(t\cap p)_-=(t\cap q)_-$, and $d(g,(t\cap p)_-)\le d(g,(r\cap p)_+)$. By Lemma \ref{lem:three contours}, $d(g,(t\cap p)_-)\le d(g,(r\cap p)_+)$ implies $t<_p s$, which contradicts our assumption. Thus, by the same argument as the inequalities in $\eqref{eq:pq}$, we can show $d((r\cap q)_+,h)>d((s\cap q)_+,h)$, which implies $r<_q s$. Similarly, we can show that $r<_q s$ implies $r<_p s$. Hence, when $r,s\notin \C_\Delta$, we have $r<_p s \iff r<_q s$.
\end{proof}

\begin{rem}
\label{rem:order}
For simplicity, for $g,h\in G$, $p\in\G_{g,h}$, and a subset $\A=\{r_1,r_2,\cdots,r_n\}$ of $\C_{p,\lambda}$ satisfing $r_1<_p r_2 <_p \cdots <_p r_n$, we denote
\[
\A=\{r_1<_p r_2 <_p \cdots <_p r_n\}.
\]
Also, by Lemma \ref{lem:order independent}, for $g,h\in G$, $x\in X\cup X^{-1}$ and a subset $\A=\{r_1,r_2,\cdots,r_n\}$ of $\C_{g,h,2\lambda}\cap \C_{gx,h,2\lambda}$ (cf. \eqref{eq:all intersection}) satisfying $r_1<_p r_2 <_p \cdots <_p r_n$ for some (equivalently any) $p\in \G_{g,h}\cup \G_{gx,h}$, we denote
\[
\A=\{r_1 < r_2 < \cdots < r_n\}.
\]
\end{rem}

Now, we are ready to define a function on $\C$, given a pair of vertices (cf. Definition \ref{def:xi[g,h,psi]}). First, we assign several real numbers ($\alpha,\beta,\rho,\sigma,\tau$) to aligned contours on a geodesic.

\begin{defn}
\label{def:alpha beta gamma}
Suppose $g,h\in G$, $p\in \G_{g,h}$, and $\A=\{r_1 <_p r_2 <_p \cdots <_p r_n\}$ is a subset of $\C_{p,2\lambda}$. We define for each $i\in\{1,\cdots,n\}$,
\[
\alpha_{r_i,p,\A}=\frac{|r_{i-1} \cap r_i \cap p|}{|r_i|},
\]
\[
\beta_{r_i,p,\A}=\frac{|r_i\cap r_{i+1} \cap p|}{|r_i|}.
\]
By convention, we consider $r_0=r_{n+1}=\emptyset$ and define $\alpha_{r_1,p,\A}=\beta_{r_n,p,\A}=0$.
\end{defn}

\begin{rem}
\label{rem:alpha beta}
\begin{itemize}
\item[(1)]
For any $i\in\{1,\cdots,n-1\}$, we have
\[
\beta_{r_i,p,\A}|r_i|
=|r_i\cap r_{i+1} \cap p|
=\alpha_{r_{i+1},p,\A}|r_{i+1}|.
\]
\item[(2)]
For any $i\in\{1,\cdots,n\}$, by $C'(\lambda)$-condition, we have
\[
\alpha_{r_i,p,\A}<\lambda~~{\rm and}~~\beta_{r_i,p,\A}<\lambda.
\]
\end{itemize}
\end{rem}

Throughout Section \ref{sec:construction of arrays}, let $\nu_0,\nu_1$ be two real numbers satisfying
\begin{equation}\label{eq:1nu}
    \mu+2\lambda \le \nu_0 < \nu_1 \le \frac{1}{2}-4\lambda
\end{equation}
and
\begin{equation}\label{eq:2nu}
    \nu_0+\lambda<\nu_1,
\end{equation}
and we define a function $\psi\colon\RR\to\RR$ by
\begin{equation}
\label{eq:psi}
    \psi(x)=
    \begin{cases}
    0 
    & {\rm if~}x\le \nu_0\\
    \frac{1}{\nu_1-\nu_0}(x-\nu_0) 
    &{\rm if~}\nu_0<x<\nu_1 \\
    1 
    & {\rm if~}x\ge \nu_1.
\end{cases}
\end{equation}
The real numbers $\nu_0,\nu_1$ satisfying \eqref{eq:1nu} and \eqref{eq:2nu} exist by $\lambda = \frac{1}{33}$ and $\mu = \frac{4}{33} = 4\lambda$. In Section \ref{sec:main thm}, we will use two different choices for the pair $(\nu_0,\nu_1)$, and these will be denoted $(\nu_{1,0},\nu_{1,1})$ and $(\nu_{2,0},\nu_{2,1})$.

We define a real number $\rho_{r_i,p,\A}\in[0,1]$ for each $i\in\{1\cdots,n\}$ inductively from $1$ to $n$ as follows.
\[
\rho_{r_1,p,\A}=\psi\left( \frac{|r_1\cap p|}{|r_1|} \right),
\]
\[
\rho_{r_{i+1},p,\A}
=\psi\left( \frac{|r_{i+1}\cap p|}{|r_{i+1}|}-\rho_{r_i,p,\A}\alpha_{r_{i+1},p,\A} \right).
\]
Just for the sake of notation, we define $\rho_{r_0,p,\A}=0$. Similarly, we define a real number $\sigma_{r_i,p,\A}\in[0,1]$ for each $i\in\{1\cdots,n\}$ inductively from $n$ to $1$ as follows.
\[
\sigma_{r_n,p,\A}=\psi\left( \frac{|r_n\cap p|}{|r_n|} \right),
\]
\[
\sigma_{r_{i-1},p,\A}
=\psi\left( \frac{|r_{i-1}\cap p|}{|r_{i-1}|}-\sigma_{r_i,p,\A}\beta_{r_{i-1},p,\A} \right).
\]
Again for the sake of notation, we define $\sigma_{r_{n+1},p,\A}=0$.

The function $\psi$ in \eqref{eq:psi} have two key properties. First, when a contour $r_i \in \A$ has small intersection with the geodesic $p$ (i.e. $\frac{|r_i\cap p|}{|r_i|}$ is small), the condition $\forall\,x \le v_0,\, \psi(x) = 0$ eliminates the effect of $r_i$ on the real numbers $\{\rho_{r_j,p,\A}, \sigma_{r_j,p,\A}\}_{j=1}^n$ defined above. That is, we get the same numbers for $\{\rho_{r_j,p,\A}, \sigma_{r_j,p,\A}\}_{j=1}^n$ even if we remove $r_i$ from $\A$. Secondly, when $r_i \in \A$ has large intersection with $p$, $r_i$ can be a reason of two branching-out geodesic paths as in Figure \ref{Fig4new}. In this case, the condition $\forall\,x \ge v_1,\, \psi(x) = 1$ eliminates the difference of lengths of intersections of $r_i$ with the two geodesics.

The idea of introducing $\{\rho_{r_j,p,\A}, \sigma_{r_j,p,\A}\}_{j=1}^n$ is to make $\{\tau_{r_j,p,\A}\}_{j=1}^n$ in Definition \ref{def:xi[p,psi,A]} stable under branching-out of geodesic paths. Also, $\{\rho_{r_j,p,\A}\}_{j=1}^n$ exponentially attenuate the change of $\frac{|r_1\cap p|}{|r_1|}$, which is made by moving the endpoint $g$, as $j$ goes from $1$ to $n$. Similarly, $\{\sigma_{r_j,p,\A}\}_{j=1}^n$ attenuate the change of $\frac{|r_n\cap p|}{|r_n|}$ made by moving the other endpoint $h$ as $j$ goes from $n$ to $1$. These points will become clear in the proof of Lemma \ref{lem:xi estimate}.

\begin{defn}
\label{def:xi[p,psi,A]}
For $g,h\in G$, $p\in\G_{g,h}$, the function $\psi$ in \eqref{eq:psi}, and a subset $\A=\{r_1 <_p r_2 <_p \cdots <_p r_n\}$ of $\C_{p,2\lambda}$, we define a map $\xi[p,\psi,\A] \colon \C \to \RR$ by
\[
\xi[p,\psi,\A]
=\sum_{i=1}^n \psi\left( \frac{|r_i\cap p|}{|r_i|}-\rho_{r_{i-1},p,\A}\alpha_{r_i,p,\A}-\sigma_{r_{i+1},p,\A}\beta_{r_i,p,\A} \right)|r_i|\cdot1_{r_i},
\]
For simplicity, we will denote
\begin{equation*}
    \tau_{r_i,p,\A}=\frac{|r_i\cap p|}{|r_i|}-\rho_{r_{i-1},p,\A}\alpha_{r_i,p,\A}-\sigma_{r_{i+1},p,\A}\beta_{r_i,p,\A}.
\end{equation*}
\end{defn}

\begin{rem}
\label{rem:tau}
For any $i\in\{1,\cdots,n\}$, we have
\[
\frac{|r_i\cap p|}{|r_i|}-2\lambda
< \tau_{r_i,p,\A}
\le \frac{|r_i\cap p|}{|r_i|}.
\]
The first inequality follows from Remark \ref{rem:alpha beta} (2).
\end{rem}

\begin{defn}
\label{def:xi[g,h,psi]}
For $g,h\in G$ and the function $\psi$ in \eqref{eq:psi}, we define a map $\xi[g,h,\psi] \colon \C \to \RR$ by
\[
\xi[g,h,\psi](r)=\max_{p\in \G_{g,h}}\xi[p,\psi,\C_{g,h,\mu}](r)
\]
for each $r\in \C$.
\end{defn}

\begin{rem}
In Definition \ref{def:xi[g,h,psi]}, $\xi[p,\psi,\C_{g,h,\mu}]$ is well-defined since $\C_{g,h,\mu}$ is a finite subset of $\C_{p,2\lambda}$. Also, the maximum over $\G_{g,h}$ exists, because $\G_{g,h}$ is finite since $\Gamma$ is locally finite.
\end{rem}

\begin{rem}\label{rem:xi positive}
For any $r\in\C$, we have $0\le \xi[g,h,\psi](r) \le |r|$.
\end{rem}

The following is straightforward.

\begin{lem}
\label{lem:xi sym eqiv}
\begin{itemize}
    \item[(1)]
    $\xi[g,h,\psi]=\xi[h,g,\psi]$ for any $g,h\in G$.
    \item[(2)]
    $\xi[kg,kh,\psi]=\pi_\C(k)\xi[h,g,\psi]$ for any $g,h,k\in G$.
\end{itemize}
\end{lem}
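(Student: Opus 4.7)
The strategy is to show that the entire construction of $\xi[p,\psi,\A]$ is insensitive to reversing the geodesic $p$, and is equivariant under the left $G$-action on $\Gamma$; both statements then follow by taking the max over geodesics. For (1), the first observation is that path reversal $p \mapsto p^{-1}$ is a bijection $\G_{g,h} \to \G_{h,g}$, and that $\C_{p,\mu'} = \C_{p^{-1},\mu'}$ because $r\cap p$ and $r\cap p^{-1}$ are literally the same subgraph of $\Gamma$. Consequently $\C_{g,h,\mu} = \C_{h,g,\mu}$. The crucial point, from the alternative characterisation of the order recorded right after its definition, is that $<_p$ and $<_{p^{-1}}$ are opposite orders on $\C_{p,\lambda}$. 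So writing $\A = \{r_1 <_p r_2 <_p \cdots <_p r_n\}$, the reversed enumeration $\{r_n <_{p^{-1}} \cdots <_{p^{-1}} r_1\}$ interchanges the notions of immediate predecessor and immediate successor, so that $\alpha_{r_i, p^{-1}, \A} = \beta_{r_i, p, \A}$ and $\beta_{r_i, p^{-1}, \A} = \alpha_{r_i, p, \A}$.

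A short induction (starting from $r_n$, which is now the first element) then yields $\rho_{r_i, p^{-1}, \A} = \sigma_{r_i, p, \A}$ and $\sigma_{r_i, p^{-1}, \A} = \rho_{r_i, p, \A}$. Substituting these identities together with $|r_i \cap p^{-1}| = |r_i \cap p|$ into the definition of $\tau$ gives $\tau_{r_i, p^{-1}, \A} = \tau_{r_i, p, \A}$, hence $\xi[p^{-1}, \psi, \A] = \xi[p, \psi, \A]$ as functions $\C \to \RR$. Taking the maximum over $p \in \G_{g,h}$, which matches taking the maximum over $p^{-1}\in \G_{h,g}$, and using $\C_{g,h,\mu}=\C_{h,g,\mu}$, yields (1).

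For (2), left multiplication by $k$ is a label-preserving graph automorphism of $\Gamma$, so $p \mapsto kp$ is a bijection $\G_{g,h}\to \G_{kg,kh}$, and $r \mapsto kr$ is a bijection on $\C$ with $|kr|=|r|$ and $|(kr)\cap (kp)|=|r\cap p|$. The order $<_p$ corresponds to $<_{kp}$ under this relabelling, so all of $\alpha, \beta, \rho, \sigma, \tau$ transport equivariantly, and $\C_{kg,kh,\mu}=k\,\C_{g,h,\mu}$. Hence $\xi[kg,kh,\psi](kr) = \xi[g,h,\psi](r)$ for every $r\in\C$, which combined with (1) gives (2). The only step in this argument that requires any care is the inductive swap $\rho \leftrightarrow \sigma$ under path reversal, and this is purely a reindexing check.
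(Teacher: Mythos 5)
Your proof is correct and follows essentially the same route as the paper's: both arguments establish, for each geodesic $p$, the identities $\alpha_{r,p^{-1},\cdot}=\beta_{r,p,\cdot}$, $\beta_{r,p^{-1},\cdot}=\alpha_{r,p,\cdot}$, hence $\rho\leftrightarrow\sigma$ under path reversal and $\tau_{r,p^{-1},\cdot}=\tau_{r,p,\cdot}$, and establish full $k$-equivariance of $\alpha,\beta,\rho,\sigma,\tau$ for (2), before passing to the maximum over geodesics. You spell out the order-reversal bookkeeping a bit more explicitly than the paper does, but the substance is identical.
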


\begin{proof}
(1) follows since we have $\C_{g,h,\mu}=\C_{h,g,\mu}$ and we have for any $p\in \G_{g,h}$ and $r\in \C_{g,h,\mu}$, $\alpha_{r,p,\C_{g,h,\mu}}=\beta_{r,p^{-1},\C_{h,g,\mu}}$ and $\beta_{r,p,\C_{g,h,\mu}}=\alpha_{r,p^{-1},\C_{h,g,\mu}}$, which imply $\rho_{r,p,\C_{g,h,\mu}}=\sigma_{r,p^{-1},\C_{h,g,\mu}}$ and $\sigma_{r,p,\C_{g,h,\mu}}=\rho_{r,p^{-1},\C_{h,g,\mu}}$, hence $\tau_{r,p,\C_{g,h,\mu}}=\tau_{r,p^{-1},\C_{h,g,\mu}}$.

(2) follows since we have $\C_{kg,kh,\mu}=k\C_{g,h,\mu}$ and we have for any $p\in \G_{g,h}$ and $r\in \C_{g,h,\mu}$, $\alpha_{kr,kp,\C_{kg,kh,\mu}}=\alpha_{r,p,\C_{g,h,\mu}}$ and $\beta_{kr,kp,\C_{kg,kh,\mu}}=\beta_{r,p,\C_{g,h,\mu}}$, which imply $\rho_{kr,kp,\C_{kg,kh,\mu}}=\rho_{r,p,\C_{g,h,\mu}}$ and $\sigma_{kr,kp,\C_{kg,kh,\mu}}=\sigma_{r,p,\C_{g,h,\mu}}$, hence $\tau_{kr,kp,\C_{kg,kh,\mu}}=\tau_{r,p,\C_{g,h,\mu}}$.
\end{proof}

\begin{lem}
\label{lem:xi for big contour}
Suppose $g,h\in G$ and $p\in\G_{g,h}$, then for any $r\in \C_{p,\nu_1+2\lambda}$, we have
\[
\xi[g,h,\psi](r)=|r|.
\]
\end{lem}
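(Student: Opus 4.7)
The plan is to exhibit the specific geodesic $p$ itself as the one achieving the maximum in the definition of $\xi[g,h,\psi](r)$, by showing that the corresponding summand evaluates to $|r|$. Concretely, I would first verify that $r$ actually appears as an index in the sum defining $\xi[p,\psi,\C_{g,h,\mu}]$, i.e., that $r \in \C_{g,h,\mu}$. Since $\nu_1 \le \frac{1}{2}-4\lambda$, the threshold $\mu' := \nu_1 + 2\lambda$ lies in $[2\lambda,\frac{1}{2}]$, so Corollary \ref{cor:attached contours} applied to $r \in \C_{p,\nu_1+2\lambda}$ gives $r \in \C_{g,h,\nu_1}$. The constraint \eqref{eq:1nu} gives $\nu_1 > \mu + 2\lambda > \mu$, hence $\C_{g,h,\nu_1} \subset \C_{g,h,\mu}$, so $r$ does appear in the enumeration $\A = \C_{g,h,\mu} = \{r_1 <_p \cdots <_p r_n\}$.

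Next, I would compute the argument of $\psi$ in the term for $r$. Writing $r = r_i$, the hypothesis $r \in \C_{p,\nu_1+2\lambda}$ means $|r_i \cap p|/|r_i| \ge \nu_1 + 2\lambda$. By Remark \ref{rem:tau}, which in turn rests on the $C'(\lambda)$-bounds $\alpha_{r_i,p,\A},\beta_{r_i,p,\A} < \lambda$ from Remark \ref{rem:alpha beta}(2) together with $\rho,\sigma \in [0,1]$, we get
\[
\tau_{r_i,p,\A} > \frac{|r_i \cap p|}{|r_i|} - 2\lambda \ge \nu_1.
\]
By the piecewise definition \eqref{eq:psi} of $\psi$, any input $\ge \nu_1$ yields $\psi = 1$, so $\psi(\tau_{r_i,p,\A}) = 1$ and therefore the summand for $r_i$ in $\xi[p,\psi,\A]$ equals $|r_i| = |r|$.

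Finally, since every summand of $\xi[p,\psi,\A]$ is nonnegative (as $\psi \ge 0$), we conclude $\xi[p,\psi,\C_{g,h,\mu}](r) \ge |r|$, and hence $\xi[g,h,\psi](r) \ge |r|$. The reverse inequality $\xi[g,h,\psi](r) \le |r|$ is Remark \ref{rem:xi positive}, which itself follows from $\psi \le 1$ and the fact that only the single basis vector $1_{r_i}$ in the sum contributes to the coefficient at $r$. Combining the two inequalities gives equality. There is no real obstacle here: the argument is a direct bookkeeping combining Corollary \ref{cor:attached contours}, Remark \ref{rem:tau}, and the plateau behavior of $\psi$ on $[\nu_1,\infty)$; the only mild care needed is checking that the parameter ranges from \eqref{eq:1nu} and \eqref{eq:2nu} make all inclusions legal.
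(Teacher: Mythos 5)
Your proof is correct and follows essentially the same route as the paper: apply Lemma~\ref{lem:attached contours} (or its Corollary~\ref{cor:attached contours}) to place $r$ in $\C_{g,h,\mu}$, use Remark~\ref{rem:tau} to deduce $\tau_{r,p,\C_{g,h,\mu}}>\nu_1$ so that $\psi$ plateaus at $1$, and then sandwich $\xi[g,h,\psi](r)$ between $\xi[p,\psi,\C_{g,h,\mu}](r)=|r|$ and the upper bound $|r|$ from Remark~\ref{rem:xi positive}. The only cosmetic remark: evaluating $\xi[p,\psi,\C_{g,h,\mu}]$ at $r$ picks out exactly the $1_r$ coefficient, so equality $\xi[p,\psi,\C_{g,h,\mu}](r)=|r|$ holds directly, making the appeal to nonnegativity of the other summands unnecessary.
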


\begin{proof}
Since $r\in \C_{p,\nu_1+2\lambda}$, we have for any $q\in\G_{g,h}$, $r\in \C_{q,\nu_1}$ by Lemma \ref{lem:attached contours}. By $\mu \le \nu_1$, this implies $r\in\C_{g,h,\mu}$. Also, since $r\in \C_{p,\nu_1+2\lambda}$, we have $|r\cap p|\ge (\nu_1+2\lambda)|r|$. Hence, by Remark \ref{rem:tau}, we have
\[
\tau_{r,p,\C_{g,h,\mu}}>\frac{|r\cap p|}{|r|}-2\lambda\ge \nu_1+2\lambda-2\lambda=\nu_1.
\]
This implies $\xi[p,\psi,\C_{g,h,\mu}](r)=\psi(\tau_{r,p,\C_{g,h,\mu}})|r|=1\cdot|r|$, hence
\[
|r|=\xi[p,\psi,\C_{g,h,\mu}](r)\le \xi[g,h,\psi](r)\le|r|.
\]
\end{proof}

Lemma \ref{lem:xi stable} means that $\xi[p,\psi,\C_{g,h,\mu}]$ is unchanged even though we add contours that are not attached to a geodesic big enough. Recall $\C_{g,h,\mu} \cup \C_{gx,h,\mu} \subset \C_{g,h,2\lambda} \cap \C_{gx,h,2\lambda}$ by Corollary \ref{cor:attached contours}, hence $\xi[p,\psi,\C_{g,h,\mu}\cup\C_{gx,h,\mu}]$ below is well-defined.

\begin{lem}\label{lem:xi stable}
For any $g,h\in G$, $p\in \G_{g,h}$, and $x\in X\cup X^{-1}$, we have
\[
\xi[p,\psi,\C_{g,h,\mu}]=\xi[p,\psi,\C_{g,h,\mu}\cup\C_{gx,h,\mu}].
\]
\end{lem}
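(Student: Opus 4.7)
Write $\A = \C_{g,h,\mu}$ and $\A' = \C_{g,h,\mu} \cup \C_{gx,h,\mu}$, which is contained in $\C_{p,2\lambda}$ by Corollary \ref{cor:attached contours}, so that $\xi[p,\psi,\A']$ is well-defined and the order $<_p$ on $\A$ is the restriction of the order $<_p$ on $\A'$. The plan is to show that contours in $\A' \setminus \A$ contribute nothing to $\xi[p,\psi,\A']$ and, moreover, do not disturb the recursively defined quantities $\rho$, $\sigma$, $\tau$ attached to the contours of $\A$.

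The quantitative key is that every $s \in \A' \setminus \A = \C_{gx,h,\mu} \setminus \C_{g,h,\mu}$ satisfies $|s \cap p|/|s| < \mu + 2\lambda$. Indeed, if the reverse inequality held, then $s \in \C_{p,\mu + 2\lambda}$, and Corollary \ref{cor:attached contours} would place $s$ into $\C_{g,h,\mu}$, contradicting $s \notin \A$. Since $\mu + 2\lambda \le \nu_0$ by \eqref{eq:1nu}, this gives $\psi(|s \cap p|/|s|) = 0$. Combined with the monotonicity of $\psi$ and the non-negativity of the quantities $\rho$, $\sigma$, $\alpha$, $\beta$, an inductive walk along $<_p$ in each direction then forces $\rho_{s,p,\A'} = \sigma_{s,p,\A'} = 0$ for every $s \in \A' \setminus \A$; the same estimate, via Remark \ref{rem:tau}, yields $\psi(\tau_{s,p,\A'}) = 0$, so such $s$ contributes $0$ to $\xi[p,\psi,\A']$, matching its contribution $0$ to $\xi[p,\psi,\A]$ (where it does not appear at all).

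It remains to check that $\rho_{r,p,\A'} = \rho_{r,p,\A}$ (and symmetrically $\sigma_{r,p,\A'} = \sigma_{r,p,\A}$) for every $r \in \A$, from which $\tau_{r,p,\A'} = \tau_{r,p,\A}$ and the equality of the two sums follows. I proceed by induction along $<_p$ on $\A'$. If the predecessor of $r$ in $\A$ and in $\A'$ coincide, the induction hypothesis gives the equality at once. The delicate case is when one or more new contours $s_1, \dots, s_m$ sit in $\A'$ strictly between $r$ and its predecessor $r'$ in $\A$. Then $r' <_p s_1 <_p r$ are three distinct elements of $\C_{p,2\lambda}$, so Lemma \ref{lem:three contours} forces $(r' \cap p) \cap (r \cap p) = \emptyset$, whence $\alpha_{r,p,\A} = 0$; combined with $\rho_{s_m,p,\A'} = 0$ from the previous paragraph, both $\rho_{r,p,\A}$ and $\rho_{r,p,\A'}$ reduce to $\psi(|r \cap p|/|r|)$. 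The main obstacle of the argument is precisely this interaction of the $\rho$--$\sigma$ recursion with interspersed new contours, and Lemma \ref{lem:three contours} is the crucial geometric input that makes the discrepancy collapse.
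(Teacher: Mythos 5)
Your proposal matches the paper's proof of Lemma \ref{lem:xi stable} in both structure and key ingredients: the same three-case split on contours ($r\notin\A'$, $r\in\A'\setminus\A$, $r\in\A$), the same use of Corollary \ref{cor:attached contours} plus $\mu+2\lambda\le\nu_0$ to kill the contribution of new contours, and the same use of Lemma \ref{lem:three contours} to make $\alpha_{r,p,\A}$ vanish across gaps. One small imprecision: after establishing $\rho_{r,p,\A'}=\rho_{r,p,\A}$ and $\sigma_{r,p,\A'}=\sigma_{r,p,\A}$ you write that $\tau_{r,p,\A'}=\tau_{r,p,\A}$ follows; strictly this also requires matching the cross-terms $\rho_{\mathrm{pred}}\alpha_r$ and $\sigma_{\mathrm{succ}}\beta_r$ between $\A$ and $\A'$, since the predecessor/successor may differ between the two sets. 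The paper checks this separately, though your ``delicate case'' analysis ($\alpha_{r,p,\A}=0$ and $\rho_{s_m,p,\A'}=0$ when new contours intervene) is exactly the ingredient that supplies the missing check, so the gap is one of presentation rather than substance.
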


\begin{proof}
Just to simplify notations, we denote $\A=\C_{g,h,\mu}\cup\C_{gx,h,\mu}$. For any $r\in \C \sm \A$, we have $\xi[p,\psi,\C_{g,h,\mu}](r)=0=\xi[p,\psi,\A](r)$. For any $r\in \A \sm \C_{g,h,\mu}$, we have $r\notin \C_{p,\mu+2\lambda}$ by Corollary \ref{cor:attached contours}, hence
\[
|r\cap p|<(\mu+2\lambda)|r|\le \nu_0|r|,
\]
since we have $\mu+2\lambda\le \nu_0$ by \eqref{eq:1nu}. By Remark \ref{rem:tau}, this implies
$\tau_{r,p,\A}\le \nu_0$, hence $\psi(\tau_{r,p,\A})=0$. Thus, we have $\xi[p,\psi,\A](r)=0=\xi[p,\psi,\C_{g,h,\mu}](r)$.

Now we will show $\xi[p,\psi,\A](r)=\xi[p,\psi,\C_{g,h,\mu}](r)$ for any $r\in\C_{g,h,\mu}$. Let $\A=\{r_1<_p\cdots <_p r_n\}$, then $\C_{g,h,\mu}=\{r_{i_1}<_p\cdots <_p r_{i_m}\}$ is a subsequence of $\A$. We show $\rho_{r_{i_k},p,\A}=\rho_{r_{i_k},p,\C_{g,h,\mu}}$ for any $k\in\{1,\cdots,m\}$ by induction from $1$ to $m$. Since $r_{i_1-1}\notin \C_{g,h,\mu}$, we have $|r_{i_1-1}\cap p|< \nu_0|r_{i_1-1}|$ as above. This implies $\frac{|r_{i_1-1}\cap p|}{|r_{i_1-1}|} -\rho_{r_{i_1-2},p,\A}\alpha_{r_{i_1-1},p,\A} \le \nu_0$, hence $\rho_{r_{i_1-1},p,\A}=0$. Thus, we have
\[
\rho_{r_{i_1},p,\A}
=\psi\left(\frac{|r_{i_1}\cap p|}{|r_{i_1}|}-0\right)
=\rho_{r_{i_1},p,\C_{g,h,\mu}}.
\]
Next, assume $\rho_{r_{i_k},p,\A}=\rho_{r_{i_k},p,\C_{g,h,\mu}}$. If $i_k+1=i_{k+1}$, then we have $\alpha_{r_{i_{k+1}},p,\A}=\alpha_{r_{i_{k+1}},p,\C_{g,h,\mu}}$, hence,
\begin{align*}
    \rho_{r_{i_{k+1}},p,\A}
&=\psi\left( \frac{|r_{i_{k+1}}\cap p|}{|r_{i_{k+1}}|} -\rho_{r_{i_k},p,\A}\alpha_{r_{i_{k+1}},p,\A} \right)\\
&=\psi\left( \frac{|r_{i_{k+1}}\cap p|}{|r_{i_{k+1}}|} -\rho_{r_{i_k},p,\C_{g,h,\mu}}\alpha_{r_{i_{k+1}},p,\C_{g,h,\mu}} \right)
=\rho_{r_{i_{k+1}},p,\C_{g,h,\mu}}.
\end{align*}
If $i_k+1<i_{k+1}$, then $r_{i_{k+1}-1}\notin \C_{g,h,\mu}$. Hence, we have $\rho_{r_{i_{k+1}-1},p,\A}=0$ as above. On the other hand, by $r_{i_k}<_p r_{i_k+1}<_p r_{i_{k+1}}$ and Lemma \ref{lem:three contours}, we have $\alpha_{r_{i_{k+1}},p,\C_{g,h,\mu}}=0$. Hence, we have
\begin{align*}
    \rho_{r_{i_{k+1}},p,\A}
&=\psi\left(\frac{|r_{i_{k+1}}\cap p|}{|r_{i_{k+1}}|}-0\cdot\alpha_{r_{i_{k+1}},p,\A} \right)\\
&=\psi\left(\frac{|r_{i_{k+1}}\cap p|}{|r_{i_{k+1}}|}-\rho_{r_{i_k},p,\C_{g,h,\mu}}\cdot0 \right)
=\rho_{r_{i_{k+1}},p,\C_{g,h,\mu}}.
\end{align*}
Thus, $\rho_{r_{i_k},p,\A}=\rho_{r_{i_k},p,\C_{g,h,\mu}}$ for any $k\in\{1,\cdots,m\}$. In the same way by induction from $m$ to $1$, we can also see $\sigma_{r_{i_k},p,\A}=\sigma_{r_{i_k},p,\C_{g,h,\mu}}$ for any $k\in\{1,\cdots,m\}$. We claim $\rho_{r_{i_k-1},p,\A}\alpha_{r_{i_k},p,\A}=\rho_{r_{i_{k-1}},p,\C_{g,h,\mu}}\alpha_{r_{i_k},p,\C_{g,h,\mu}}$ for any $k\in\{1,\cdots,m\}$. Indeed, if $i_{k-1}=i_k-1$, then this follows from $\alpha_{r_{i_k},p,\A}=\alpha_{r_{i_k},p,\C_{g,h,\mu}}$ and $\rho_{r_{i_{k-1}},p,\A}=\rho_{r_{i_{k-1}},p,\C_{g,h,\mu}}$ as we have shown. If $i_{k-1}<i_k-1$, then $r_{i_k-1}\notin\C_{g,h,\mu}$, hence we have $\rho_{r_{i_k-1},p,\A}=0$ and $\alpha_{r_{i_k},p,\C_{g,h,\mu}}=0$ as above. This implies $\rho_{r_{i_k-1},p,\A}\alpha_{r_{i_k},p,\A}=0=\rho_{r_{i_{k-1}},p,\C_{g,h,\mu}}\alpha_{r_{i_k},p,\C_{g,h,\mu}}$. Similarly, we can also see $\sigma_{r_{i_k+1},p,\A}\beta_{r_{i_k},p,\A}=\sigma_{r_{i_{k+1}},p,\C_{g,h,\mu}}\beta_{r_{i_k},p,\C_{g,h,\mu}}$ for any $k\in\{1,\cdots,m\}$. Thus, for any $k\in\{1,\cdots,m\}$, we have $\tau_{r_{i_k},p,\A}=\tau_{r_{i_k},p,\C_{g,h,\mu}}$, hence $\xi[p,\psi,\A](r_{i_k})=\xi[p,\psi,\C_{g,h,\mu}](r_{i_k})$.
\end{proof}

Throughout Section \ref{sec:construction of arrays}, we define
\begin{equation}\label{eq:K}
    K=\frac{\lambda}{\nu_1-\nu_0}
\end{equation}
for simplicity. We have $0<K<1$ by $\nu_0+\lambda<\nu_1$.

\begin{lem}
\label{lem:xi estimate}
Suppose $g,h\in G$, $x\in X\cup X^{-1}$ and $\C_{g,h,\mu}\cup\C_{gx,h,\mu}=\{r_1<\cdots<r_n\}$, then for any $p\in \G_{g,h}$, there exists $q\in \G_{gx,h}$ such that for any $i\in\{1,\cdots,n\}$,
\begin{equation}
\label{eq:xi}
    |\xi[p,\psi,\C_{g,h,\mu}\cup\C_{gx,h,\mu}](r_i)-\xi[q,\psi,\C_{g,h,\mu}\cup\C_{gx,h,\mu}](r_i)|
<\frac{K^{i-1}}{\nu_1-\nu_0}.
\end{equation}
\end{lem}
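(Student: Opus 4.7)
My plan is to construct a specific $q\in\G_{gx,h}$ from the van Kampen diagram supplied by Lemma \ref{lem:Fig4new}, then run an induction on $i$ where the contraction factor $K=\lambda/(\nu_1-\nu_0)<1$ arises by combining the Lipschitz constant $1/(\nu_1-\nu_0)$ of $\psi$ with the bounds $\alpha_{r_i,\cdot,\A},\beta_{r_i,\cdot,\A}<\lambda$ from Remark \ref{rem:alpha beta}(2). Throughout I write $\A=\C_{g,h,\mu}\cup\C_{gx,h,\mu}$.

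First I would choose $q$ so that a reduced van Kampen diagram $\Delta$ of $\L(\e q p^{-1})$ takes one of the forms (a), (b), (c) of Figure \ref{Fig4new}, which forces $p$ and $q$ to share all vertices lying between consecutive 2-cells of $\Delta$. For $r_i\in\C_\Delta$, Lemma \ref{lem:CDelata large} gives $|r_i\cap p|/|r_i|$ and $|r_i\cap q|/|r_i|$ both at least $\tfrac12-2\lambda$, which together with Remark \ref{rem:tau} and $\nu_1\le\tfrac12-4\lambda$ forces $\tau_{r_i,p,\A},\tau_{r_i,q,\A}\ge\nu_1$; thus $\psi$ saturates at $1$ and $\xi[p,\psi,\A](r_i)=|r_i|=\xi[q,\psi,\A](r_i)$, making \eqref{eq:xi} trivial on this set. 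For $r_i\in\A\setminus\C_\Delta$, Remark \ref{rem:attached contours} controls how $r_i\cap p$ and $r_i\cap q$ differ along $r_i$, giving $\bigl|\,|r_i\cap p|-|r_i\cap q|\,\bigr|=O(\lambda|r_i|)$ together with analogous control on the pairwise overlaps $|(r_{i\pm 1}\cap p)\cap(r_i\cap p)|$ vs.\ $|(r_{i\pm 1}\cap q)\cap(r_i\cap q)|$.

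Since the claimed bound in \eqref{eq:xi} equals $K^{i-1}/(\nu_1-\nu_0)$, the Lipschitz property of $\psi$ reduces the task to proving the equivalent estimate $|r_i|\cdot|\tau_{r_i,p,\A}-\tau_{r_i,q,\A}|<K^{i-1}$, and I would establish this by strong induction on $i$. The forward recursion $\rho_{r_i,\cdot,\A}=\psi(\,\cdot\,-\rho_{r_{i-1},\cdot,\A}\alpha_{r_i,\cdot,\A})$ contracts errors in $\rho$ by factor at most $K$ per step, and the backward recursion for $\sigma$ behaves symmetrically. The convention $\rho_{r_0,\cdot,\A}=0$ anchors the forward induction, while the fact that $p$ and $q$ share a common suffix near $h$ past the rightmost 2-cell of $\Delta$ anchors the $\sigma$-recursion via $\sigma_{r_n,p,\A}=\sigma_{r_n,q,\A}$, with this equality propagating further down as long as $|r_j\cap p|=|r_j\cap q|$. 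Substituting these bounds into the defining formula for $\tau_{r_i,\cdot,\A}$ gives the desired estimate.

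The main obstacle is disentangling the simultaneous contributions of the forward $\rho$- and backward $\sigma$-recursions to each $\tau_{r_i,\cdot,\A}$, while also accounting for the residual direct discrepancies $\bigl|\,|r_i\cap p|-|r_i\cap q|\,\bigr|$ for $r_i\in\A\setminus\C_\Delta$. I expect to handle this by arguing that the substantive discrepancies are concentrated near $\e$ on the left end, so that the "initial" error at $r_1$ is of magnitude $O(1)$ and the geometric contraction by $K$ per step then propagates it rightward to yield the factor $K^{i-1}$. A case analysis on the form of $\Delta$ in Figure \ref{Fig4new}, and on whether the leftmost 2-cell of $\Delta$ adjacent to $\e$ itself lies in $\A$ or not, will constitute the bulk of the bookkeeping.
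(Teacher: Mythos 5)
Your outline captures several correct ingredients: constructing $q$ from the van Kampen diagram of Lemma \ref{lem:Fig4new}, using Lemma \ref{lem:CDelata large} and Remark \ref{rem:tau} to see that $\psi$ saturates at $1$ on $\C_\Delta$, and running the forward $\rho$-recursion with the contraction factor $K=\lambda/(\nu_1-\nu_0)$ arising from the Lipschitz constant of $\psi$ times the bound $\alpha,\beta<\lambda$. But there is a genuine gap in the step where you assert that for $r_i\in\A\setminus\C_\Delta$ one only controls $\bigl|\,|r_i\cap p|-|r_i\cap q|\,\bigr|=O(\lambda|r_i|)$ via Remark \ref{rem:attached contours}. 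That estimate is the correct bound for an \emph{arbitrary} $q\in\G_{gx,h}$, but it is fatally too weak for the claimed conclusion: plugged into the $\psi$-Lipschitz bound it yields a fresh error of size $O(\lambda/(\nu_1-\nu_0))$ at \emph{every} index $i$ outside $\C_\Delta$, so the accumulated error would not decay geometrically, and \eqref{eq:xi} would fail for large $i$. Your closing sentence (``I expect to handle this by arguing that the substantive discrepancies are concentrated near $\e$'') actually contradicts the $O(\lambda|r_i|)$-per-step picture and is the true heart of the proof, but it is not established.

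What is actually needed is a \emph{specific} choice of $q$, not an arbitrary one, and the argument splits into cases accordingly. If $gx\in p$ take $q=p_{[gx,h]}$; if $g\in q_0$ for some fixed $q_0\in\G_{gx,h}$ take $q=\e^{-1}p$; in both cases $p$ and $q$ differ only by the single edge $\e$, so $r_i\cap p=r_i\cap q$ for all $i\geq 2$, $\bigl|\,|r_1\cap p|-|r_1\cap q|\,\bigr|\leq 1$, and all $\alpha$'s, $\beta$'s and $\sigma$'s agree; the $K^{i-1}$ decay then comes \emph{solely} from the forward $\rho$-recursion starting from a one-edge error at $r_1$. Otherwise take $q=q_{0[gx,v]}p_{[v,h]}$ where $v$ is the last common vertex of $p$ and $q_0$; then $\Delta$ has form (a), $\C_\Delta=\{r_1<\cdots<r_k\}$, and $r_i\cap p=r_i\cap q$ exactly for $i\geq k+2$. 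In this second case the estimate \eqref{eq:xi} holds trivially with $\xi[p,\psi,\A](r_i)=\xi[q,\psi,\A](r_i)$ for \emph{all} $i$: for $i\leq k$ by saturation, for $i\geq k+2$ by identity of the relevant data, and for the boundary index $i=k+1$ by a separate computation using the saturated value $\rho_{r_k,\cdot,\A}=1$, which turns $\frac{|r_{k+1}\cap \cdot|}{|r_{k+1}|}-\rho_{r_k,\cdot,\A}\alpha_{r_{k+1},\cdot,\A}$ into $\frac{|\cdot_{[v,(r_{k+1}\cap\cdot)_+]}|}{|r_{k+1}|}$, a quantity that coincides for $p$ and $q$ because they share the suffix past $v$. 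Your sketch does not isolate this boundary contour $r_{k+1}$, and without the saturation identity the argument there does not close.

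So: right skeleton, but replace the $O(\lambda|r_i|)$ control by the exact coincidence of $r_i\cap p$ and $r_i\cap q$ forced by the careful choice of $q$, and add the separate $r_{k+1}$ calculation. These are the ingredients that make the $K^{i-1}$ bound in \eqref{eq:xi} actually hold.
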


\begin{proof}
For simplicity, we denote $\A=\C_{g,h,\mu}\cup\C_{gx,h,\mu}$. Let $p\in \G_{g,h}$ and fix $q_0\in \G_{gx,h}$. Also, let $\e$ be the edge of $\Gamma$ from $g$ to $gx$ with $\L(\e)=x$.

(1) When $gx\in p$, we define $q=p_{[gx,h]}$ (cf. Definition \ref{def:Cayley} for $p_{[gx,h]}$) and when $g\in q_0$, we define $q=\e^{-1} p$. In both cases, we have $q\in \G_{gx,h}$, $(r_1\cap p)\Delta (r_1\cap q)\subset \e$ ($\Delta$ means symmetric difference here), and for any $i\in\{2,\cdots,n\}$, $r_i\cap p=r_i\cap q$. This implies for any $i\in\{1,\cdots,n\}$, $\alpha_{r_i,p,\A}=\alpha_{r_i,q,\A}$ and $\beta_{r_i,p,\A}=\beta_{r_i,q,\A}$. Also, by induction from $n$ to $2$, we can see for any $i\in\{2,\cdots,n\}$, $\sigma_{r_i,p,\A}=\sigma_{r_i,q,\A}$. We will show for any $i\in\{1,\cdots,n\}$,
\begin{equation}
\label{eq:rho}
    |\rho_{r_i,p,\A}-\rho_{r_i,q,\A}||r_i|<\frac{K^{i-1}}{\nu_1-\nu_0}
\end{equation}
by induction from $1$ to $n$. For $r_1$, by $||r_1\cap p|-|r_1\cap q|| \le |\e| = 1$, we have
\begin{align*}
    |\rho_{r_1,p,\A}-\rho_{r_1,q,\A}||r_1|
    &=\left| \psi\left( \frac{|r_1\cap p|}{|r_1|} \right)-\psi\left( \frac{|r_1\cap q|}{|r_1|} \right) \right||r_1| \\
    &\le \frac{1}{\nu_1-\nu_0} \left| \frac{|r_1\cap p|}{|r_1|} -\frac{|r_1\cap q|}{|r_1|} \right||r_1| \\
    &=\frac{1}{\nu_1-\nu_0}||r_1\cap p|-|r_1\cap q|| \\
    &\le \frac{1}{\nu_1-\nu_0}\cdot 1.
\end{align*}

Next, assume \eqref{eq:rho} holds for $r_i$ with $i\ge1$, then by Remark \ref{rem:alpha beta} (1)(2), we have
\begin{align*}
    &|\rho_{r_{i+1},p,\A}-\rho_{r_{i+1},q,\A}|\cdot|r_{i+1}|\\
    &= \left| \psi\left( \frac{|r_{i+1}\cap p|}{|r_{i+1}|}-\rho_{r_i,p,\A}\alpha_{r_{i+1},p,\A} \right) - \psi\left( \frac{|r_{i+1}\cap q|}{|r_{i+1}|}-\rho_{r_i,q,\A}\alpha_{r_{i+1},q,\A} \right) \right|\cdot|r_{i+1}| \\
    &\le \frac{1}{\nu_1-\nu_0}\cdot \left| \left( \frac{|r_{i+1}\cap p|}{|r_{i+1}|}-\rho_{r_i,p,\A}\alpha_{r_{i+1},p,\A} \right) - \left( \frac{|r_{i+1}\cap q|}{|r_{i+1}|}-\rho_{r_i,q,\A}\alpha_{r_{i+1},q,\A} \right) \right|\cdot|r_{i+1}| \\
    &=\frac{1}{\nu_1-\nu_0}\cdot |-\rho_{r_i,p,\A}\alpha_{r_{i+1},p,\A} +\rho_{r_i,q,\A}\alpha_{r_{i+1},q,\A}|\cdot|r_{i+1}| \\
    &=\frac{1}{\nu_1-\nu_0}\cdot |-\rho_{r_i,p,\A} +\rho_{r_i,q,\A}|\cdot \alpha_{r_{i+1},p,\A}\cdot|r_{i+1}|\\
    &=\frac{1}{\nu_1-\nu_0}\cdot |-\rho_{r_i,p,\A} +\rho_{r_i,q,\A}|\cdot \beta_{r_i,p,\A}\cdot|r_i|\\
    &<\frac{1}{\nu_1-\nu_0}\cdot |-\rho_{r_i,p,\A} +\rho_{r_i,q,\A}|\cdot\lambda\cdot|r_i|
    =\frac{\lambda}{\nu_1-\nu_0} \cdot|\rho_{r_i,p,\A} -\rho_{r_i,q,\A}|\cdot|r_i|\\
    &<\frac{\lambda}{\nu_1-\nu_0}\cdot \frac{K^{i-1}}{\nu_1-\nu_0}
    =K\cdot\frac{K^{i-1}}{\nu_1-\nu_0}.
\end{align*}
Here, we also used $|r_{i+1}\cap p|=|r_{i+1}\cap q|$ and $\alpha_{r_{i+1},p,\A}=\alpha_{r_{i+1},q,\A}$. Thus, \eqref{eq:rho} holds for any $i\in\{1,\cdots,n\}$ by induction. Since $\sigma_{r_{i+1},p,\A}\beta_{r_i,p,\A}=\sigma_{r_{i+1},q,\A}\beta_{r_i,q,\A}$ for any $i\in\{1,\cdots,n\}$, this implies by the same calculation as above,
\begin{align*}
    &|\xi[p,\psi,\A](r_1)-\xi[q,\psi,\A](r_1)|\\
    &=\left| \psi\left( \frac{|r_1\cap p|}{|r_1|}-\sigma_{r_2,p,\A}\beta_{r_1,p,\A} \right)-\psi\left( \frac{|r_1\cap q|}{|r_1|}-\sigma_{r_2,q,\A}\beta_{r_1,q,\A} \right) \right|\cdot|r_1|\\
    &\le \frac{1}{\nu_1-\nu_0}\cdot\left| \left( \frac{|r_1\cap p|}{|r_1|}-\sigma_{r_2,p,\A}\beta_{r_1,p,\A} \right)-\left( \frac{|r_1\cap q|}{|r_1|}-\sigma_{r_2,q,\A}\beta_{r_1,q,\A} \right) \right|\cdot|r_1|\\
    &= \frac{1}{\nu_1-\nu_0}\cdot\left| \left( \frac{|r_1\cap p|}{|r_1|} \right)-\left( \frac{|r_1\cap q|}{|r_1|} \right) \right|\cdot|r_1|\\
    &\le \frac{1}{\nu_1-\nu_0}\cdot 1
\end{align*}
and for any $i\in\{2,\cdots,n\}$,
\begin{align*}
    &|\xi[p,\psi,\A](r_i)-\xi[q,\psi,\A](r_i)|\\
    &=\bigg| \psi\bigg( \frac{|r_i\cap p|}{|r_i|}-\rho_{r_{i-1},p,\A}\alpha_{r_i,p,\A}-\sigma_{r_{i+1},p,\A}\beta_{r_i,p,\A} \bigg)\\
    &~~~~~~~~~~~~~~~ -\psi\bigg( \frac{|r_i\cap q|}{|r_i|}-\rho_{r_{i-1},q,\A}\alpha_{r_i,q,\A}-\sigma_{r_{i+1},q,\A}\beta_{r_i,q,\A} \bigg) \bigg|\cdot|r_i|\\
    &\le \frac{1}{\nu_1-\nu_0}\cdot \bigg| \bigg( \frac{|r_i\cap p|}{|r_i|}-\rho_{r_{i-1},p,\A}\alpha_{r_i,p,\A}-\sigma_{r_{i+1},p,\A}\beta_{r_i,p,\A} \bigg)\\
    &~~~~~~~~~~~~~~~~~~~~~~~~-\bigg( \frac{|r_i\cap q|}{|r_i|}-\rho_{r_{i-1},q,\A}\alpha_{r_i,q,\A}-\sigma_{r_{i+1},q,\A}\beta_{r_i,q,\A} \bigg) \bigg|\cdot|r_i|\\
    &=\frac{1}{\nu_1-\nu_0}\cdot \left| \left( \frac{|r_i\cap p|}{|r_i|}-\rho_{r_{i-1},p,\A}\alpha_{r_i,p,\A} \right)-\left( \frac{|r_i\cap q|}{|r_i|}-\rho_{r_{i-1},q,\A}\alpha_{r_i,q,\A} \right) \right|\cdot|r_i|\\
    &<\frac{\lambda}{\nu_1-\nu_0}\cdot |\rho_{r_{i-1},p,\A} -\rho_{r_{i-1},q,\A}|\cdot|r_{i-1}|\\
    &<\frac{\lambda}{\nu_1-\nu_0}\cdot \frac{K^{i-2}}{\nu_1-\nu_0}
    =\frac{K^{i-1}}{\nu_1-\nu_0}.
\end{align*}
Thus, \eqref{eq:xi} holds for $q=p_{[gx,h]}$ when $gx\in p$ and $q=\e^{-1} p$ when $g\in q_0$.

(2) When $gx\notin p$ and $g\notin q_0$, we define $v\in G$ to be the furthest vertex from $h$ in $p\cap q_0$ and define $q=q_{0[gx,v]}p_{[v,h]}$. We have $q\in \G_{gx,h}$. Let $\Delta$ be a van Kampen diagram of $\L(\e qp^{-1})$, then since $\Delta$ has the form (a) of Figure \ref{Fig4new}, there exists $k\in \{1,\cdots,n\}$ such that $\C_\Delta=\{r_1<\cdots<r_k\}$ by Lemma \ref{lem:CDelata large}, Corollary \ref{cor:attached contours}, and $\mu \le \frac{1}{2}-4\lambda$. Also, we have $r_i\cap p = r_i\cap q$ and $\alpha_{r_i,p,\A}=\alpha_{r_i,q,\A}$ for any $i\in\{k+2,\cdots,n\}$, and $\beta_{r_i,p,\A}=\beta_{r_i,q,\A}$ for any $i\in\{k+1,\cdots,n\}$. Hence, by induction from $n$ to $k+2$, we can see $\sigma_{r_i,p,\A}=\sigma_{r_i,q,\A}$ for any $i\in\{k+2,\cdots,n\}$.

We claim $\rho_{r_i,p,\A}=\rho_{r_i,q,\A}$ for any $i\in\{k+1,\cdots,n\}$. Indeed, we have $|r_k\cap p|\ge \left(\frac{1}{2}-2\lambda\right)|r_k|$ by $r_k\in \C_\Delta$ and Lemma \ref{lem:CDelata large}. By Remark \ref{rem:alpha beta} (2), we also have $\alpha_{r_k,p,\A}<\lambda$, hence
\[
\frac{|r_k\cap p|}{|r_k|}-\rho_{r_{k-1},p,\A}\alpha_{r_k,p,\A}>\frac{1}{2}-2\lambda-\lambda\ge \nu_1.
\]
This implies $\rho_{r_k,p,\A}=1$, hence 
\begin{equation}
\label{eq:k+1}
    \frac{|r_{k+1}\cap p|}{|r_{k+1}|}-\rho_{r_k,p,\A}\alpha_{r_{k+1},p,\A}
=\frac{|r_{k+1}\cap p|}{|r_{k+1}|}-\frac{|r_k\cap r_{k+1} \cap p|}{|r_{k+1}|}
=\frac{|p_{[v,(r_{k+1}\cap p)_+]}|}{|r_{k+1}|}.
\end{equation}
Similarly, we have $\frac{|r_{k+1}\cap q|}{|r_{k+1}|}-\rho_{r_k,q,\A}\alpha_{r_{k+1},q,\A}=\frac{|q_{[v,(r_{k+1}\cap q)_+]}|}{|r_{k+1}|}$. Here, since $q_{[v,h]}=p_{[v,h]}$, we have $(r_{k+1}\cap p)_+=(r_{k+1}\cap q)_+$ and $p_{[v,(r_{k+1}\cap p)_+]}=q_{[v,(r_{k+1}\cap q)_+]}$. This implies
\begin{equation}
\label{eq:rho k+1}
    \rho_{r_{k+1},p,\A}
    =\psi\left( \frac{|p_{[v,(r_{k+1}\cap p)_+]}|}{|r_{k+1}|} \right)
    =\psi\left( \frac{|q_{[v,(r_{k+1}\cap q)_+]}|}{|r_{k+1}|} \right)
    =\rho_{r_{k+1},q,\A}.
\end{equation}
Since $r_i\cap p = r_i\cap q$ and $\alpha_{r_i,p,\A}=\alpha_{r_i,q,\A}$ for any $i\in\{k+2,\cdots,n\}$, we get $\rho_{r_i,p,\A}=\rho_{r_i,q,\A}$ for any $i\in\{k+1,\cdots,n\}$ by induction from $k+1$ to $n$. Finally, we will see for any $i\in\{1,\cdots,n\}$,
\begin{equation}
\label{eq:xi equal}
    \xi[p,\psi,\A](r_i)=\xi[q,\psi,\A](r_i).
\end{equation}
Indeed, this follows for $i\in\{k+2,\cdots,n\}$, since we have $\tau_{r_i,p,\A}=\tau_{r_i,q,\A}$ by what we have shown. For $r_{k+1}$, we can see $\frac{|r_{k+1}\cap p|}{|r_{k+1}|}-\rho_{r_k,p,\A}\alpha_{r_{k+1},p,\A}=\frac{|r_{k+1}\cap q|}{|r_{k+1}|}-\rho_{r_k,q,\A}\alpha_{r_{k+1},q,\A}$ by the same calculation as \eqref{eq:k+1} and \eqref{eq:rho k+1}. We also have $\sigma_{r_{k+2},p,\A}=\sigma_{r_{k+2},q,\A}$ and $\beta_{r_{k+1},p,\A}=\beta_{r_{k+1},q,\A}$. This implies $\tau_{r_{k+1},p,\A}=\tau_{r_{k+1},q,\A}$, hence \eqref{eq:xi equal}. For $i\in\{1,\cdots,k\}$, we have $|r_i\cap p|\ge \left(\frac{1}{2}-2\lambda\right)|r_i|$ by $r_i\in \C_\Delta$ and Lemma \ref{lem:CDelata large}. By Remark \ref{rem:alpha beta} (2), we also have $\alpha_{r_i,p,\A}<\lambda$ and $\beta_{r_i,p,\A}<\lambda$, hence
\[
\frac{|r_i\cap p|}{|r_i|}-\rho_{r_{i-1},p,\A}\alpha_{r_i,p,\A}-\sigma_{r_{i+1},p,\A}\beta_{r_i,p,\A}>\frac{1}{2}-2\lambda-\lambda-\lambda\ge \nu_1.
\]
This implies $\psi(\tau_{r_i,p,\A})=1$. Similarly, we can see $\psi(\tau_{r_i,q,\A})=1$. Thus, $\xi[p,\psi,\A](r_i)=|r_i|=\xi[q,\psi,\A](r_i)$.
\end{proof}

\begin{prop}\label{prop:xi array}
Suppose $g,h\in G$, $x\in X\cup X^{-1}$, and $\C_{g,h,\mu}\cup\C_{gx,h,\mu}=\{r_1<\cdots<r_n\}$. The following hold.
\begin{itemize}
    \item[(1)]
    $|\xi[g,h,\psi](r_i)-\xi[gx,h,\psi](r_i)|<\frac{K^{i-1}}{\nu_1-\nu_0}$ for any $i\in\{1,\cdots,n\}$.
    \item[(2)]
    $\|\xi[g,h,\psi]-\xi[gx,h,\psi]\|_1<\frac{1}{(1-K)(\nu_1-\nu_0)}$.
\end{itemize}
\end{prop}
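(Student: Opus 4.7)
The plan is to derive both (1) and (2) by combining Lemma \ref{lem:xi stable} and Lemma \ref{lem:xi estimate} with a symmetry swap between the base points $g$ and $gx$. Set $\A = \C_{g,h,\mu}\cup\C_{gx,h,\mu}$. Applying Lemma \ref{lem:xi stable} both to the pair $(g,h)$ with letter $x$ and to the pair $(gx,h)$ with letter $x^{-1}$ (so that $(gx)\cdot x^{-1}=g$ and the two contour-set unions both coincide with $\A$), I obtain $\xi[p,\psi,\C_{g,h,\mu}]=\xi[p,\psi,\A]$ for every $p\in\G_{g,h}$ and $\xi[q,\psi,\C_{gx,h,\mu}]=\xi[q,\psi,\A]$ for every $q\in\G_{gx,h}$. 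This places $\xi[g,h,\psi]$ and $\xi[gx,h,\psi]$ on a common footing as maxima of $\A$-supported coefficients. Moreover, the enumeration $\{r_1<\cdots<r_n\}$ of $\A$ is unambiguous across the two vantage points by Lemma \ref{lem:order independent} together with Corollary \ref{cor:attached contours}, and this is precisely the feature that makes the symmetry swap preserve the index $i$ when we invoke Lemma \ref{lem:xi estimate} from either side.

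For part (1), I fix $i$ and choose a maximizer $p^*\in\G_{g,h}$ with $\xi[g,h,\psi](r_i)=\xi[p^*,\psi,\A](r_i)$. Applying Lemma \ref{lem:xi estimate} produces $q\in\G_{gx,h}$ with $|\xi[p^*,\psi,\A](r_i)-\xi[q,\psi,\A](r_i)|<K^{i-1}/(\nu_1-\nu_0)$. Since $\xi[q,\psi,\A](r_i)\le \xi[gx,h,\psi](r_i)$ by definition of the maximum (combined with the second instance of Lemma \ref{lem:xi stable}), this gives one direction of the inequality in (1). For the reverse direction, I will apply Lemma \ref{lem:xi estimate} to the pair $(gx,h)$ with letter $x^{-1}$, starting from a maximizer $q^*\in\G_{gx,h}$ of $\xi[gx,h,\psi](r_i)$, which yields a matching $p\in\G_{g,h}$ satisfying the same strict bound with the same exponent $K^{i-1}$. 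Combining the two one-sided estimates gives (1).

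Part (2) then follows immediately: both $\xi[g,h,\psi]$ and $\xi[gx,h,\psi]$ vanish off $\A=\{r_1,\dots,r_n\}$ (since each is by definition a maximum of expressions supported on $\C_{g,h,\mu}$, respectively $\C_{gx,h,\mu}$), so the $\ell^1$-norm reduces to the finite sum $\sum_{i=1}^n|\xi[g,h,\psi](r_i)-\xi[gx,h,\psi](r_i)|$. Inserting the pointwise bound from (1) and summing the geometric series $\sum_{i=1}^n K^{i-1}<1/(1-K)$ (valid since $0<K<1$) produces (2). I do not expect a substantive obstacle here: the technical geometric work has already been carried out in Lemma \ref{lem:xi estimate}, and all that remains is an optimizer-comparison argument plus a geometric-series estimate. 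The only point that deserves careful bookkeeping is verifying that the enumeration of $\A$ is genuinely canonical across both vantage points, so that the exponent $K^{i-1}$ really coincides in both applications of Lemma \ref{lem:xi estimate}.
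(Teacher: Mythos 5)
Your proposal is correct and follows essentially the same route as the paper: both rely on Lemma \ref{lem:xi stable} to pass between $\C_{g,h,\mu}$-coefficients and $\A$-coefficients, invoke Lemma \ref{lem:xi estimate} to produce a matching geodesic with the $K^{i-1}$-bound, deduce a one-sided inequality by comparing with the maximum defining $\xi[gx,h,\psi]$, and obtain the two-sided bound by repeating the argument with $(gx,h,x^{-1})$ in place of $(g,h,x)$; part (2) is then the finite sum over the common support $\A$ bounded by the geometric series. The only cosmetic difference is that you fix $i$ first and take a per-$i$ maximizer $p^*$, whereas the paper takes an arbitrary $p$ and passes to the maximum afterward, but these are the same argument. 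Your flagged concern about the enumeration being canonical across both vantage points is precisely what Remark \ref{rem:order} (backed by Lemma \ref{lem:order independent} and Corollary \ref{cor:attached contours}) secures, so the exponent $K^{i-1}$ is indeed well-defined independently of direction.
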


\begin{proof}
(1) For any $p\in\G_{g,h}$, by Lemma \ref{lem:xi estimate}, there exists $q\in \G_{gx,h}$ such that for any $i\in\{1,\cdots,n\}$,
\[
|\xi[p,\psi,\C_{g,h,\mu}\cup\C_{gx,h,\mu}](r_i)-\xi[q,\psi,\C_{g,h,\mu}\cup\C_{gx,h,\mu}](r_i)|<\frac{K^{i-1}}{\nu_1-\nu_0}.
\]
By Lemma \ref{lem:xi stable}, this implies
\begin{align*}
\xi[p,\psi,\C_{g,h,\mu}](r_i)-\frac{K^{i-1}}{\nu_1-\nu_0}
&=\xi[p,\psi,\C_{g,h,\mu}\cup\C_{gx,h,\mu}](r_i)-\frac{K^{i-1}}{\nu_1-\nu_0}\\
&<\xi[q,\psi,\C_{g,h,\mu}\cup\C_{gx,h,\mu}](r_i)
=\xi[q,\psi,\C_{gx,h,\mu}](r_i)\\
&\le\max_{q'\in\G_{gx,h}}\xi[q',\psi,\C_{gx,h,\mu}](r_i)
= \xi[gx,h,\psi](r_i).
\end{align*}
Hence, for any $i\in\{1,\cdots,n\}$, we have 
\[
\xi[g,h,\psi](r_i)-\frac{K^{i-1}}{\nu_1-\nu_0}
=\max_{p\in\G_{g,h}}\xi[p,\psi,\C_{g,h,\mu}](r_i)-\frac{K^{i-1}}{\nu_1-\nu_0}
<\xi[gx,h,\psi](r_i).
\]
By the same argument for $gx,h\in G$ and $x^{-1}\in X\cup X^{-1}$, we also have 
\[
\xi[gx,h,\psi](r_i)-\frac{K^{i-1}}{\nu_1-\nu_0}<\xi[g,h,\psi](r_i)
\]
for any $i\in\{1,\cdots,n\}$, thus $|\xi[g,h,\psi](r_i)-\xi[gx,h,\psi](r_i)|<\frac{K^{i-1}}{\nu_1-\nu_0}$.

(2) Note $\xi[g,h,\psi](r_i)=\xi[gx,h,\psi](r_i)=0$ for any $r\in\C \sm (\C_{g,h,\mu}\cup\C_{gx,h,\mu})$. Hence, by (1), we have
\begin{align*}
    \|\xi[g,h,\psi]-\xi[gx,h,\psi]\|_1
&=\sum_{i=1}^n |\xi[g,h,\psi](r_i)-\xi[gx,h,\psi](r_i)| \\
&<\sum_{i=1}^n \frac{K^{i-1}}{\nu_1-\nu_0}
<\sum_{i=1}^\infty \frac{K^{i-1}}{\nu_1-\nu_0}
=\frac{1}{(1-K)(\nu_1-\nu_0)}.
\end{align*}
\end{proof}

\subsection{Second array}\label{subsec:second array}
The main goal of Section \ref{subsec:second array} is to define a map on $\E$ in Definition \ref{def:eta} and prove Proposition \ref{prop:eta aray}. Recall Definition \ref{def:E,V} and Remark \ref{rem:E} for $\E$ and $\E(p)$.

\begin{defn}
For $g,h\in G$, we define
\[
\E_{g,h}=\bigcup_{p\in \G_{g,h}}\E(p).
\]
\end{defn}

\begin{defn}\label{def:eta}
For $g,h\in G$, the function $\psi$ in \eqref{eq:psi}, and a subset $\A$ of $\C_{g,h,2\lambda}$ (cf. \eqref{eq:all intersection}), we define a map $\eta[g,h,\psi,\A] \colon \E \to \RR$ by
\[
\eta[g,h,\psi,\A]=\sum_{e\in\E_{g,h}} \left( 1-\max\left\{\frac{\xi[g,h,\psi](r)}{|r|} \mid r\in \A \wedge e\in \E(r) \right\}\right) \cdot 1_e.
\]
In particular, we define
\[
\eta[g,h,\psi]=\eta[g,h,\psi,\C_{g,h,\mu}].
\]
\end{defn}

\begin{rem}\label{rem:eta positive}
For any $e\in \E$, we have $0\le \eta[g,h,\psi](e)\le 1$.
\end{rem}

The following are straightforward.

\begin{lem}
\label{lem:eta sym eqiv}
\begin{itemize}
    \item[(1)]
    $\eta[g,h,\psi]=\eta[h,g,\psi]$ for any $g,h\in G$.
    \item[(2)]
    $\eta[kg,kh,\psi]=\pi_\E(k)\eta[h,g,\psi]$ for any $g,h,k\in G$.
\end{itemize}
\end{lem}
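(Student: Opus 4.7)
The proof is straightforward bookkeeping that parallels the proof of Lemma \ref{lem:xi sym eqiv}. The plan is to reduce both claims to $G$-equivariance (or symmetry) of each of the three ingredients appearing in Definition \ref{def:eta}: the edge set $\E_{g,h}$, the contour set $\C_{g,h,\mu}$, and the function $\xi[g,h,\psi]$.

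For (1), I first observe that a geodesic path from $g$ to $h$ is the same, as a subgraph of $\Gamma$, as its reversal from $h$ to $g$; hence $\E_{g,h}=\E_{h,g}$. From \eqref{eq:all intersection} we get $\C_{g,h,\mu}=\C_{h,g,\mu}$ at once, and by Lemma \ref{lem:xi sym eqiv} (1) we have $\xi[g,h,\psi]=\xi[h,g,\psi]$. Substituting these identities directly into the definition
\[
\eta[g,h,\psi]=\sum_{e\in\E_{g,h}} \left( 1-\max\!\left\{\tfrac{\xi[g,h,\psi](r)}{|r|} \;\Big|\; r\in \C_{g,h,\mu},\ e\in \E(r) \right\}\right)\cdot 1_e
\]
gives $\eta[g,h,\psi]=\eta[h,g,\psi]$.

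For (2), the $G$-action on $\Gamma$ is by label-preserving graph automorphisms (cf.\ Remark \ref{rem:without inversion}), so it sends geodesic paths to geodesic paths and contours to contours, preserving all relevant combinatorial data. Concretely, $\E_{kg,kh}=k\cdot\E_{g,h}$, $\C_{kg,kh,\mu}=k\cdot \C_{g,h,\mu}$, $|kr|=|r|$ for every contour $r$, and $e\in\E(r)$ iff $ke\in\E(kr)$. Combined with Lemma \ref{lem:xi sym eqiv} (2), which gives $\xi[kg,kh,\psi](kr)=\xi[g,h,\psi](r)$ for every $r\in\C$, a substitution and reindexing of the maximum yields, for any $e\in\E$,
\[
\eta[kg,kh,\psi](ke)
=\Big(1-\max\!\Big\{\tfrac{\xi[g,h,\psi](r)}{|r|} \;\Big|\; r\in \C_{g,h,\mu},\ e\in \E(r)\Big\}\Big)\cdot 1_{\E_{g,h}}(e)
=\eta[g,h,\psi](e),
\]
which is precisely $\eta[kg,kh,\psi]=\pi_\E(k)\eta[g,h,\psi]$; combined with (1) this is the desired identity $\eta[kg,kh,\psi]=\pi_\E(k)\eta[h,g,\psi]$.

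There is no real obstacle here: the only point that needs care is tracking that the edge set $\E_{g,h}$, the contour set $\C_{g,h,\mu}$, and the function $\xi[g,h,\psi]$ each transform correctly under the swap $g\leftrightarrow h$ and under left multiplication by $k$. Once Lemma \ref{lem:xi sym eqiv} is applied, the outer maximum and the support indicator $1_{\E_{g,h}}$ carry the symmetries through by direct substitution.
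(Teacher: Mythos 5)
Your proof is correct and takes essentially the same approach as the paper, reducing both parts to the transformation rules $\E_{g,h}=\E_{h,g}$, $\C_{g,h,\mu}=\C_{h,g,\mu}$, $\E_{kg,kh}=k\E_{g,h}$, $\C_{kg,kh,\mu}=k\C_{g,h,\mu}$ together with Lemma \ref{lem:xi sym eqiv}.
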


\begin{proof}
(1) follows from $\E_{g,h}=\E_{h,g}$, $\C_{g,h,\mu}=\C_{h,g,\mu}$ and, $\xi[g,h,\psi]=\xi[h,g,\psi]$. (2) follows from $\E_{kg,kh}=k\E_{g,h}$, $\C_{kg,kh,\mu}=k\C_{g,h,\mu}$ and, $\xi[kg,kh,\psi]=\pi_\C(k)\xi[g,h,\psi]$.
\end{proof}

\begin{lem}
\label{lem:eta stable}
For any $g,h\in G$ and $x\in X\cup X^{-1}$, we have
\[
\eta[g,h,\psi]=\eta[g,h,\psi,\C_{g,h,\mu}\cup \C_{gx,h,\mu}].
\]
\end{lem}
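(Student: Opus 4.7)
The plan is to establish the pointwise equality $\eta[g,h,\psi](e)=\eta[g,h,\psi,\C_{g,h,\mu}\cup\C_{gx,h,\mu}](e)$ for every $e\in\E$, by showing that the additional contours in $\C_{gx,h,\mu}\setminus\C_{g,h,\mu}$ contribute nothing to the relevant maximum inside the definition of $\eta$.

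First, I would set $\B=\C_{g,h,\mu}$ and $\A=\C_{g,h,\mu}\cup\C_{gx,h,\mu}$. By Corollary \ref{cor:attached contours}, $\A\subset\C_{g,h,2\lambda}$, so $\eta[g,h,\psi,\A]$ is well-defined. Both functions vanish outside $\E_{g,h}$, so it suffices to fix $e\in\E_{g,h}$ and compare the two maxima. Since $\B\subset\A$, the maximum appearing in $\eta[g,h,\psi,\A](e)$ is taken over a superset of that in $\eta[g,h,\psi](e)$, so it is automatic that $\eta[g,h,\psi,\A](e)\le\eta[g,h,\psi](e)$.

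The key step is the claim that every $r\in\A\setminus\B$ satisfies $\xi[g,h,\psi](r)=0$. This follows directly from the definitions: by Definition \ref{def:xi[g,h,psi]}, $\xi[g,h,\psi](r)=\max_{p\in\G_{g,h}}\xi[p,\psi,\C_{g,h,\mu}](r)$, and by Definition \ref{def:xi[p,psi,A]} the function $\xi[p,\psi,\C_{g,h,\mu}]$ is a finite sum of terms of the form $(\text{coefficient})\cdot 1_{r_i}$ with $r_i\in\C_{g,h,\mu}$. Hence $\xi[p,\psi,\C_{g,h,\mu}](r)=0$ whenever $r\notin\C_{g,h,\mu}$, and the maximum over $p$ is likewise zero.

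Combining these observations, the contributions $\xi[g,h,\psi](r)/|r|$ coming from $r\in\A\setminus\B$ are all zero, so adjoining them to the max-indexing set cannot raise the maximum (it is already nonnegative by Remark \ref{rem:xi positive}, with the usual convention that the maximum over the empty set is zero, consistent with Remark \ref{rem:eta positive}). Therefore the two maxima agree at every $e\in\E_{g,h}$, yielding the stated equality of functions on $\E$. No real obstacle is expected: the lemma is essentially immediate once one notes that $\xi[g,h,\psi]$ is supported on $\C_{g,h,\mu}$ by construction.
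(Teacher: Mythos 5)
Your proof is correct and follows essentially the same approach as the paper: the key observation in both is that $\xi[g,h,\psi]$ is supported on $\C_{g,h,\mu}$ by Definitions \ref{def:xi[p,psi,A]} and \ref{def:xi[g,h,psi]}, so enlarging the index set of the maximum by contours from $\C_{gx,h,\mu}\setminus\C_{g,h,\mu}$ only adjoins zero values and leaves the maximum unchanged. The preliminary inequality you note is harmless but unnecessary; the equality of the two maxima already suffices, as in the paper.
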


\begin{proof}
For any $r\in\C \sm \C_{g,h,\mu}$, we have $\xi[g,h,\psi](r)=0$ by Definition \ref{def:xi[p,psi,A]} and Definition \ref{def:xi[g,h,psi]}. Hence, for any $e\in\E_{g,h}$, we have 
\begin{align*}
    &\max\bigg\{\frac{\xi[g,h,\psi](r)}{|r|} \;\bigg|\; r\in \C_{g,h,\mu} \wedge e\in \E(r) \bigg\}\\
=&\max\bigg\{\frac{\xi[g,h,\psi](r)}{|r|} \;\bigg|\; r\in \C_{g,h,\mu}\cup\C_{gx,h,\mu} \wedge e\in \E(r) \bigg\}.
\end{align*}
This implies $\eta[g,h,\psi]=\eta[g,h,\psi,\C_{g,h,\mu}]=\eta[g,h,\psi,\C_{g,h,\mu}\cup \C_{gx,h,\mu}]$.
\end{proof}

\begin{lem}
\label{lem:eta estimate}
Suppose $g,h\in G$, $x\in X\cup X^{-1}$, and $\C_{g,h,\mu}\cup \C_{gx,h,\mu}=\{r_1 < \cdots < r_n\}$. For any $e\in \E_{g,h}\cap \E_{gx,h}\cap \left(\bigcup_{i=1}^n \E(r_i)\right)$, there exists $i_e\in\{1,\cdots,n\}$ such that $e\in \E(r_{i_e})$ and
\begin{equation}
\label{eq:eta estimate}
    \left|\eta[g,h,\psi,\C_{g,h,\mu}\cup \C_{gx,h,\mu}](e)-\eta[gx,h,\psi,\C_{g,h,\mu}\cup \C_{gx,h,\mu}](e) \right|
\le \frac{K^{i_e-2}}{(\nu_1-\nu_0)|r_{i_e}|}.
\end{equation}
\end{lem}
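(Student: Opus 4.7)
The plan is to reduce this estimate directly to Proposition \ref{prop:xi array}(1), via the elementary inequality that the difference of two maxima is controlled by the pointwise difference at the argmax. First I would unpack the definition of $\eta[\cdot,\cdot,\psi,\A]$ for $\A=\C_{g,h,\mu}\cup\C_{gx,h,\mu}=\{r_1<\cdots<r_n\}$: since by hypothesis $e\in\E_{g,h}\cap\E_{gx,h}$, both values $\eta[g,h,\psi,\A](e)$ and $\eta[gx,h,\psi,\A](e)$ are computed by the ``$1-\max$'' formula, so
\[
\eta[g,h,\psi,\A](e)-\eta[gx,h,\psi,\A](e)=M_{gx}(e)-M_g(e),
\]
where for brevity
\[
M_g(e)=\max\Bigl\{\tfrac{\xi[g,h,\psi](r)}{|r|}\;\Big|\;r\in\A,\ e\in\E(r)\Bigr\},\qquad M_{gx}(e)=\max\Bigl\{\tfrac{\xi[gx,h,\psi](r)}{|r|}\;\Big|\;r\in\A,\ e\in\E(r)\Bigr\}.
\]
The hypothesis $e\in\bigcup_{i=1}^n\E(r_i)$ guarantees that the common index set $S(e):=\{i\in\{1,\dots,n\}\mid e\in\E(r_i)\}$ is non-empty, so both maxima are well-defined and are taken over the same finite set.

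Next I would argue by symmetry. Assume without loss of generality that $M_g(e)\ge M_{gx}(e)$ and let $i_e\in S(e)$ be an index achieving $M_g(e)$; in the opposite case take $i_e$ to be an index achieving $M_{gx}(e)$. In either case $e\in\E(r_{i_e})$ as required. Then, using that $\xi[gx,h,\psi](r_{i_e})/|r_{i_e}|\le M_{gx}(e)$ in the assumed case,
\[
0\le M_g(e)-M_{gx}(e)\le \frac{\xi[g,h,\psi](r_{i_e})-\xi[gx,h,\psi](r_{i_e})}{|r_{i_e}|}.
\]
Applying Proposition \ref{prop:xi array}(1) to $r_{i_e}$ gives
\[
\frac{\bigl|\xi[g,h,\psi](r_{i_e})-\xi[gx,h,\psi](r_{i_e})\bigr|}{|r_{i_e}|}<\frac{K^{i_e-1}}{(\nu_1-\nu_0)|r_{i_e}|}\le \frac{K^{i_e-2}}{(\nu_1-\nu_0)|r_{i_e}|},
\]
where the last inequality uses $0<K<1$ (cf.\ \eqref{eq:K}). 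The symmetric case is identical with the roles of $g$ and $gx$ exchanged, so in both cases the desired bound \eqref{eq:eta estimate} holds.

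I do not expect any real obstacle here: the lemma is essentially bookkeeping once Proposition \ref{prop:xi array}(1) is in hand. The only small point of care is to verify that the max sets for $M_g(e)$ and $M_{gx}(e)$ coincide (they are both indexed by $S(e)$) so that the ``same-argument comparison'' really yields a pointwise bound on the contour $r_{i_e}$ rather than requiring a more delicate comparison across different contours. The looser exponent $K^{i_e-2}$ in the statement is presumably chosen for later convenience (most likely to absorb an off-by-one shift when this lemma is combined with other estimates in the next section) and costs nothing here.
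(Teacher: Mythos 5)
Your proof is correct, and it takes a genuinely different and in fact cleaner route than the paper. The paper's proof first establishes, via Lemma~\ref{lem:three contours} applied on some $p\in\G_{g,h}$ with $e\in\E(p)$, that the set $\A_e=\{r\in\C_{g,h,\mu}\cup\C_{gx,h,\mu}\mid e\in\E(r)\}$ has at most two elements and that these two (if there are two) are consecutive in the order, say $r_k$ and $r_{k+1}$. It then does a case analysis on $|\A_e|\in\{1,2\}$ and, in the two-element case, on which contour realizes the max; it picks $i_e\in\{k,k+1\}$ according to which of $|r_k|,|r_{k+1}|$ is smaller, which is exactly why the looser exponent $K^{i_e-2}$ appears: the exponent coming from Proposition~\ref{prop:xi array}(1) may be $k-1$ or $k$, while the chosen index may be $k+1$, costing one power of $K$. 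Your argument bypasses all of this. You observe that since $e\in\E_{g,h}\cap\E_{gx,h}$, both values are of the form $1-\max$, and you apply the elementary estimate that the difference of two maxima (over a common nonempty index set, here guaranteed by $e\in\bigcup_i\E(r_i)$) is bounded by the difference of the function values at whichever index realizes the larger maximum; you then invoke Proposition~\ref{prop:xi array}(1) at that single contour $r_{i_e}$. This avoids Lemma~\ref{lem:three contours} and the case split entirely, and it yields the stronger bound $K^{i_e-1}/((\nu_1-\nu_0)|r_{i_e}|)$, which of course implies the stated $K^{i_e-2}$ bound since $0<K<1$. Your parenthetical speculation that the $-2$ is for later convenience is not quite right: it is an artifact of the paper's bookkeeping in the two-contour case, and your argument shows it is not needed.
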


\begin{proof}
Let $e\in \E_{g,h}\cap\E_{gx,h}\cap \left(\bigcup_{i=1}^n \E(r_i)\right)$. For simplicity, we denote $\A=\C_{g,h,\mu}\cup \C_{gx,h,\mu}$ and $\A_e=\{r\in \C_{g,h,\mu}\cup \C_{gx,h,\mu} \mid e\in \E(r)\}$. Since $e\in\E_{g,h}$, there exists $p\in \G_{g,h}$ such that $e\in\E(p)$. Note $\A\subset \C_{p,2\lambda}$ by Corollary \ref{cor:attached contours}. Hence, by Lemma \ref{lem:three contours}, one of (1) or (2) below occurs: (1) $\A_e=\{r_k\}$ for some $k\in\{1\cdots,n\}$, (2) $\A_e=\{r_k,r_{k+1}\}$ for some $k\in\{1\cdots,n-1\}$. In case (1), we have $\eta[g,h,\psi,\A](e)=1-\frac{\xi[g,h,\psi](r_k)}{|r_k|}$ and $\eta[gx,h,\psi,\A](e)=1-\frac{\xi[gx,h,\psi](r_k)}{|r_k|}$. hence by Proposition \ref{prop:xi array} (1), 
\[
|\eta[g,h,\psi,\A](e)-\eta[gx,h,\psi,\A](e)|
=\frac{|\xi[g,h,\psi](r_k)-\xi[gx,h,\psi](r_k)|}{|r_k|}
<\frac{K^{k-1}}{(\nu_1-\nu_0)|r_k|}.
\]
Thus, $i_e=k$ satisfies \eqref{eq:eta estimate}. Here, note $0<K<1$.

In case (2), if $\frac{\xi[g,h,\psi](r_k)}{|r_k|}\ge \frac{\xi[g,h,\psi](r_{k+1})}{|r_{k+1}|}$, then $\eta[g,h,\psi,\A](e)=1-\frac{\xi[g,h,\psi](r_k)}{|r_k|}$. Hence, by Proposition \ref{prop:xi array} (1), we have
\begin{align*}
    \eta[gx,h,\psi,\A](e)
&\le 1-\frac{\xi[gx,h,\psi](r_k)}{|r_k|}\\
&<1-\frac{\xi[g,h,\psi](r_k)}{|r_k|}+\frac{K^{k-1}}{(\nu_1-\nu_0)|r_k|}\\
&=\eta[g,h,\psi,\A](e)+\frac{K^{k-1}}{(\nu_1-\nu_0)|r_k|}\\
&\le \eta[g,h,\psi,\A](e)+\frac{K^{k-1}}{(\nu_1-\nu_0)\cdot \min\{|r_k|,|r_{k+1}|\}}.
\end{align*}
If $\frac{\xi[g,h,\psi](r_k)}{|r_k|}\le \frac{\xi[g,h,\psi](r_{k+1})}{|r_{k+1}|}$, then we get by similar calculation using Proposition \ref{prop:xi array} (1),
\begin{align*}
    \eta[gx,h,\psi,\A](e)
&< \eta[g,h,\psi,\A](e)+\frac{K^{k}}{(\nu_1-\nu_0)\cdot \min\{|r_k|,|r_{k+1}|\}}\\
&\le \eta[g,h,\psi,\A](e)+\frac{K^{k-1}}{(\nu_1-\nu_0)\cdot \min\{|r_k|,|r_{k+1}|\}}.
\end{align*}
Note $0<K<1$. In both cases, we have $\eta[gx,h,\psi,\A](e)< \eta[g,h,\psi,\A](e)+\frac{K^{k-1}}{(\nu_1-\nu_0)\cdot \min\{|r_k|,|r_{k+1}|\}}$. By the same argument for $gx,h\in G$ and $x^{-1}\in X\cup X^{-1}$, we also have $\eta[g,h,\psi,\A](e)< \eta[gx,h,\psi,\A](e)+\frac{K^{k-1}}{(\nu_1-\nu_0)\cdot \min\{|r_k|,|r_{k+1}|\}}$, hence
\[
|\eta[g,h,\psi,\A](e)- \eta[gx,h,\psi,\A](e)|
<\frac{K^{k-1}}{(\nu_1-\nu_0)\cdot \min\{|r_k|,|r_{k+1}|\}}.
\]
Thus, if $|r_k|\le|r_{k+1}|$, $i_e=k$ satisfies $\eqref{eq:eta estimate}$ and if $|r_k|\ge|r_{k+1}|$, $i_e=k+1$ satisfies $\eqref{eq:eta estimate}$.
\end{proof}

\begin{prop}\label{prop:eta aray}
For any $g,h\in G$ and $x\in X\cup X^{-1}$, we have
\[
\|\eta[g,h,\psi]-\eta[gx,h,\psi]\|_1
<1+\frac{1}{K(1-K)(\nu_1-\nu_0)}.
\]
\end{prop}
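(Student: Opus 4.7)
My plan is to reduce both maps to a common form via Lemma~\ref{lem:eta stable}, and then split the edge-sum by geometric type. Setting $\A = \C_{g,h,\mu}\cup\C_{gx,h,\mu} = \{r_1 < \cdots < r_n\}$, Lemma~\ref{lem:eta stable} lets me replace $\eta[g,h,\psi]$ by $\eta[g,h,\psi,\A]$ and similarly for $gx$, so that
\[
\|\eta[g,h,\psi]-\eta[gx,h,\psi]\|_1 = \sum_{e\in\E} |\eta[g,h,\psi,\A](e) - \eta[gx,h,\psi,\A](e)|.
\]
I will then partition this sum by whether $e$ lies in $\E_{g,h}\cap\E_{gx,h}$ or in the symmetric difference $\E_{g,h}\triangle\E_{gx,h}$, and within the intersection by whether $e$ lies in $\bigcup_{i=1}^n\E(r_i)$.

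For edges $e\in \E_{g,h}\cap\E_{gx,h}\cap\bigcup_i\E(r_i)$, Lemma~\ref{lem:eta estimate} supplies a per-edge bound of $\frac{K^{i_e-2}}{(\nu_1-\nu_0)|r_{i_e}|}$ with $e\in \E(r_{i_e})$. Since for each fixed $i$ at most $|r_i|$ edges can share $i_e = i$, summing over $i$ collapses into a geometric series bounded by $\frac{1}{\nu_1-\nu_0}\sum_{i=1}^n K^{i-2} < \frac{1}{K(1-K)(\nu_1-\nu_0)}$. For edges in $\E_{g,h}\cap\E_{gx,h}$ lying in no $\E(r_i)$, both $\eta$-values equal $1$ (under the convention $\max\emptyset = 0$), so their contribution is zero.

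The additive constant $1$ in the bound must therefore come from $\E_{g,h}\triangle\E_{gx,h}$. The strategy is to invoke Lemmas~\ref{lem:Fig3new}--\ref{lem:Fig4new}: for any $p\in\G_{g,h}$ and $q\in\G_{gx,h}$, the van Kampen diagram of $\L(\e qp^{-1})$ has one of the forms (a), (b), (c) in Figure~\ref{Fig4new}, and outside of its 2-cells (whose boundary contours already lie in $\A$) the arcs $p$ and $q$ coincide modulo the single edge $\e$. Consequently, the non-contour part of $\E_{g,h}\triangle\E_{gx,h}$ consists of at most one edge, contributing at most~$1$. The main obstacle I anticipate is controlling contour edges that lie in the symmetric difference: even when a contour $r_i$ lies in both $\C_{g,h,\mu}$ and $\C_{gx,h,\mu}$, the arcs $r_i\cap p$ and $r_i\cap q$ can differ at their endpoints by up to $\lambda|r_i|$ (cf.\ Remark~\ref{rem:attached contours}), so one must argue that the resulting $\eta$-contributions are either negligible (when $\xi[\cdot,h,\psi](r_i)/|r_i|$ is close to $1$ on the relevant side) or reabsorbed into the geometric series by the same inductive $K$-telescoping already used in Proposition~\ref{prop:xi array}. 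Collecting the three contributions yields the stated strict inequality.
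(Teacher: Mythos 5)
Your framework, reduction via Lemma~\ref{lem:eta stable}, the per-edge bound from Lemma~\ref{lem:eta estimate}, and the geometric-series collapse match the paper's argument, but the handling of $\E_{g,h}\triangle\E_{gx,h}$ is where the proposal leaves a genuine gap. You correctly observe that the sole non-contour edge there is $\e$, and you correctly flag the contour edges in the symmetric difference as the remaining concern, but then you only gesture at two possible resolutions (``negligible'' or ``reabsorbed into the geometric series'') without carrying either out. Neither of these is quite what happens.

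The correct resolution (and the one the paper uses) is sharper and simpler: if $e\in\E_{g,h}\sm(\E(\e)\cup\E_{gx,h})$, pick $p\in\G_{g,h}$ with $e\in\E(p)$ and any $q\in\G_{gx,h}$; then $e\in\E(p)\sm\E(q)$, so $e$ must lie on some $s_e\in\C_\Delta$ for the van Kampen diagram $\Delta$ of $\L(\e qp^{-1})$. By Lemma~\ref{lem:CDelata large} and Corollary~\ref{cor:attached contours}, $\C_\Delta\subset\C_{p,\frac12-2\lambda}\subset\C_{g,h,\mu}$, and by Lemma~\ref{lem:xi for big contour} (using $\nu_1+2\lambda\le\frac12-2\lambda$) we have $\xi[g,h,\psi](s_e)=|s_e|$ \emph{exactly}, forcing $\eta[g,h,\psi](e)=0$. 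Combined with $\eta[gx,h,\psi](e)=0$ (since $e\notin\E_{gx,h}$), the contribution of every such edge is exactly zero, not merely small. In particular these terms are not ``reabsorbed into the geometric series''; they simply vanish, and the additive constant $1$ comes solely from $\e$. Also beware that you cannot analyze $\E_{g,h}\triangle\E_{gx,h}$ from a single fixed pair $(p,q)$ since $\E_{g,h}$ is a union over all geodesics; the argument must be run edge by edge, choosing $p$ so that $e\in\E(p)$.
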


\begin{proof}
Let $g,h\in G$, $x\in X\cup X^{-1}$, and $\C_{g,h,\mu}\cup \C_{gx,h,\mu}=\{r_1 < \cdots < r_n\}$. Let $\e$ be the edge of $\Gamma$ from $g$ to $gx$ with $\L(\e)=x$. Note $\eta[g,h,\psi](e)=\eta[gx,h,\psi](e)=0$ for any $e\in \E\sm (\E_{g,h}\cup\E_{gx,h})$. We claim for any $e\in \E_{g,h}\sm(\E(\e)\cup\E_{gx,h})$, $\eta[g,h,\psi](e)=0$. Indeed, since $e\in\E_{g,h}\sm \E_{gx,h}$, there exist $p\in\G_{g,h}$ and $q\in\G_{gx,h}$ such that $e\in \E(p)\sm \E(q)$. Let $\Delta$ be a van Kampen diagram of $\L(\e qp^{-1})$. Since $p\sm q \subset \e \cup \bigcup_{s\in\C_\Delta}(s\cap p)$, we have $e\in \E(\e) \cup \bigcup_{s\in\C_\Delta}\E(s)$. Recall we assume $e\neq \E(\e)$, hence there exists $s_e\in \C_\Delta$ such that $e\in\E(s_e)$. Note that by Lemma \ref{lem:CDelata large} and Corollary \ref{cor:attached contours}, we have $\C_\Delta \subset \C_{p,\frac{1}{2}-2\lambda} \subset \C_{g,h,\frac{1}{2}-2\lambda-2\lambda}$. In particular, we have $\C_\Delta\subset \C_{g,h,\mu}$ by $\mu\le\frac{1}{2}-4\lambda$. We also have $\xi[g,h,\psi](s_e)=|s_e|$ by Lemma \ref{lem:xi for big contour} and $\nu_1+2\lambda\le\frac{1}{2}-2\lambda$. Thus,
\[
0
\le \eta[g,h,\psi](e)
\le 1-\frac{|s_e|}{|s_e|}
=0.
\]
This implies $\eta[g,h,\psi](e)=\eta[gx,h,\psi](e)=0$ for any $e\in \E_{g,h}\sm(\E(\e)\cup\E_{gx,h})$. Similarly we can also see $\eta[g,h,\psi](e)=\eta[gx,h,\psi](e)=0$ for any $e\in \E_{gx,h}\sm(\E(\e)\cup\E_{g,h})$. Thus, we have
\[
\|\eta[g,h,\psi]-\eta[gx,h,\psi]\|_1
=\sum_{e\in \E(\e)\cup(\E_{g,h}\cap\E_{gx,h})}|\eta[g,h,\psi](e)-\eta[gx,h,\psi](e)|.
\]

If $e\in (\E_{g,h}\cap\E_{gx,h})\sm \left(\bigcup_{i=1}^n \E(r_i)\right)$, then we have $\eta[g,h,\psi](e)=\eta[gx,h,\psi](e)=1$. This implies
\begin{align*}
    &\sum_{e\in \E(\e)\cup(\E_{g,h}\cap\E_{gx,h})}|\eta[g,h,\psi](e)-\eta[gx,h,\psi](e)|\\
    =&\sum_{e\in \E(\e)\cup\left(\E_{g,h}\cap\E_{gx,h}\cap\left(\bigcup_{i=1}^n \E(r_i)\right) \right)}|\eta[g,h,\psi](e)-\eta[gx,h,\psi](e)|.
\end{align*}

By Lemma \ref{lem:eta stable} and Lemma \ref{lem:eta estimate}, for any $e\in \E_{g,h}\cap \E_{gx,h}\cap\left(\bigcup_{i=1}^n \E(r_i)\right)$, there exists $i_e\in\{1,\cdots,n\}$ such that $e\in \E(r_{i_e})$ and
\[
\left|\eta[g,h,\psi](e)-\eta[gx,h,\psi](e) \right|
\le \frac{K^{i_e-2}}{(\nu_1-\nu_0)|r_{i_e}|}.
\]
Hence, when we define for each $r_k\in \C_{g,h,\mu}\cup \C_{gx,h,\mu}$,
\begin{align*}
    B_{r_k}=\Big\{e\in& \E_{g,h}\cap\E_{gx,h}\cap \Big(\bigcup_{i=1}^n \E(r_i)\Big) \\
    &\;\Big|\; e\in \E(r_k) \wedge |\eta[g,h,\psi](e)-\eta[gx,h,\psi](e) |
    \le \frac{K^{k-2}}{(\nu_1-\nu_0)|r_k|}\Big\},
\end{align*}
we have $\E_{g,h}\cap\E_{gx,h}\cap \left(\bigcup_{i=1}^n \E(r_i)\right)=\bigcup_{i=1}^n B_{r_i}$. Thus, we have
\begin{align*}
    \|\eta[g,h,\psi]-\eta[gx,h,\psi]\|_1
    &=\sum_{e\in \E(\e)\cup\left(\E_{g,h}\cap\E_{gx,h}\cap\left(\bigcup_{i=1}^n \E(r_i)\right) \right)}|\eta[g,h,\psi](e)-\eta[gx,h,\psi](e)|\\
    &\le |\eta[g,h,\psi](\E(\e))-\eta[gx,h,\psi](\E(\e))|\\
    &~~~~~~~~~~~
    +\sum_{i=1}^n \sum_{e\in B_{r_i}}|\eta[g,h,\psi](e)-\eta[gx,h,\psi](e)|\\
    &\le 1+\sum_{i=1}^n \sum_{e\in B_{r_i}} \frac{K^{i-2}}{(\nu_1-\nu_0)|r_i|}
    =1+\sum_{i=1}^n \frac{K^{i-2}}{(\nu_1-\nu_0)|r_i|}\cdot|B_{r_i}|\\
    &\le 1+\sum_{i=1}^n \frac{K^{i-2}}{\nu_1-\nu_0}\\
    &< 1+\sum_{i=1}^\infty \frac{K^{i-2}}{\nu_1-\nu_0}
    =1+\frac{1}{K(1-K)(\nu_1-\nu_0)}.
\end{align*}
\end{proof}

\begin{lem}
\label{lem:eta proper}
Suppose $g,h\in G$, $p\in \G_{g,h}$, and $e\in\E(p)$. If $e\notin \bigcup_{r\in \C_{p,\nu_0-2\lambda}}\E(r)$, then $\eta[g,h,\psi](e)=1$.
\end{lem}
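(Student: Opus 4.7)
The plan is to chase definitions and transfer the hypothesis from the specific geodesic $p$ to an arbitrary geodesic $q \in \G_{g,h}$ via Lemma~\ref{lem:attached contours}.

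First I would unpack Definition~\ref{def:eta}. Since $e \in \E(p) \subset \E_{g,h}$, the value $\eta[g,h,\psi](e)$ equals $1 - M$, where
\[
M = \max\Big\{\frac{\xi[g,h,\psi](r)}{|r|} \;\Big|\; r \in \C_{g,h,\mu},\ e \in \E(r)\Big\},
\]
interpreted as $0$ if the indexing set is empty. So it suffices to show that $\xi[g,h,\psi](r) = 0$ for every $r \in \C_{g,h,\mu}$ with $e \in \E(r)$. Fix such an $r$. The hypothesis on $e$ says $e \notin \E(r')$ for any $r' \in \C_{p,\nu_0 - 2\lambda}$; since $e \in \E(r)$, this forces $r \notin \C_{p, \nu_0 - 2\lambda}$, i.e.\ $|r \cap p| < (\nu_0 - 2\lambda)|r|$.

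Next I would transfer this bound to any $q \in \G_{g,h}$. The range condition \eqref{eq:1nu} together with $\mu = 4\lambda$ gives $\nu_0 \ge 6\lambda \ge 2\lambda$ and $\nu_0 \le \nu_1 \le \tfrac{1}{2} - 4\lambda \le \tfrac{1}{2}$, so $\nu_0 \in [2\lambda, \tfrac{1}{2}]$ and Lemma~\ref{lem:attached contours} applies with $\mu' = \nu_0$. Applying it with $x = 1$ and the roles of $p$ and $q$ swapped, we obtain $\C_{q,\nu_0} \subset \C_{p, \nu_0 - 2\lambda}$. Since $r$ is not in the right-hand side, it is not in $\C_{q,\nu_0}$ either, so $|r \cap q| < \nu_0|r|$ for every $q \in \G_{g,h}$.

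Finally, Remark~\ref{rem:tau} gives $\tau_{r,q,\C_{g,h,\mu}} \le |r \cap q|/|r| < \nu_0$, and the piecewise formula \eqref{eq:psi} for $\psi$ then yields $\psi(\tau_{r,q,\C_{g,h,\mu}}) = 0$. By Definition~\ref{def:xi[p,psi,A]} this means $\xi[q,\psi,\C_{g,h,\mu}](r) = 0$ for every $q \in \G_{g,h}$, and taking the max over $q \in \G_{g,h}$ in Definition~\ref{def:xi[g,h,psi]} gives $\xi[g,h,\psi](r) = 0$. Hence $M = 0$ and $\eta[g,h,\psi](e) = 1$. The only subtle point in this plan is verifying the applicability of Lemma~\ref{lem:attached contours} in the claimed direction, but this is immediate by swapping the names of the two geodesics in the $x = 1$ case; there is no genuine obstacle here, as the work has already been absorbed into the preceding lemmas.
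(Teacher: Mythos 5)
Your proof is correct and is essentially the paper's argument run forward instead of by contraposition: both hinge on the same two ingredients, namely Lemma~\ref{lem:attached contours} with $x=1$ to transfer the bound $|r\cap p|<(\nu_0-2\lambda)|r|$ to $|r\cap q|<\nu_0|r|$ for every geodesic $q$, and Remark~\ref{rem:tau} together with the cutoff in $\psi$ to conclude $\xi[q,\psi,\C_{g,h,\mu}](r)=0$. The convention $\max\emptyset = 0$ that you flag is the one the paper uses (see the proof of Proposition~\ref{prop:eta aray}), so there is no gap.
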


\begin{proof}
We will show the contraposition. Suppose $e\in\E(p)$ and $\eta[g,h,\psi](e)<1$, then there exists $s\in\C_{g,h,\mu}$ such that $e\in\E(s)$ and $\xi[g,h,\psi](s)>0$. Hence, there exists $q\in\G_{g,h}$ such that $\xi[q,\psi,\C_{g,h,\mu}](s)>0$. This implies $\tau_{s,q,\C_{g,h,\mu}}>\nu_0$, hence we have
\[
\frac{|s\cap q|}{|s|}\ge \tau_{s,q,\C_{g,h,\mu}}>\nu_0.
\]
by Remark \ref{rem:tau}. By Lemma \ref{lem:attached contours}, this implies $s\in\C_{p,\nu_0-2\lambda}$. Thus, we have $e\in \bigcup_{r\in \C_{p,\nu_0-2\lambda}}\E(r)$.
\end{proof}

\section{Main theorem}\label{sec:main thm}
The goal of this section is to prove Proposition \ref{prop:intro proper array} and Theorem \ref{thm:main}. Up to Remark \ref{rem:proper array}, $G,\Gamma,\lambda,\mu$ are as were defined at the beginning of Section \ref{sec:construction of arrays}. That is, $G=\la X \mid \R \ra$ is a finitely generated $C'(\lambda)$-group satisfying the condition \eqref{eq:*} in Section \ref{sec:construction of arrays}, $\Gamma$ is the Cayley graph of $G$ with respect to $X$, $\lambda=\frac{1}{33}$, and $\mu=\frac{4}{33}=4\lambda$. We fix
\[
\nu_{1,0}=\frac{6}{33}=6\lambda  ,~~  \nu_{1,1}=\frac{7.1}{33}=7.1\lambda, ~~\nu_{2,0}=\frac{11.1}{33}=11.1\lambda  ,~~  \nu_{2,1}=\frac{12.2}{33}=12.2\lambda,
\]
and define two functions $\psi_1,\psi_2 \colon \RR\to\RR$ by
\[
    \psi_1(x)=
    \begin{cases}
    0 
    & {\rm if~}x\le \nu_{1,0}\\
    \frac{1}{\nu_{1,1}-\nu_{1,0}}(x-\nu_{1,0}) 
    &{\rm if~}\nu_{1,0}<x<\nu_{1,1} \\
    1 
    & {\rm if~}x\ge \nu_{1,1}
    \end{cases}
\]
and
\[
    \psi_2(x)=
    \begin{cases}
    0 
    & {\rm if~}x\le \nu_{2,0}\\
    \frac{1}{\nu_{2,1}-\nu_{2,0}}(x-\nu_{2,0}) 
    &{\rm if~}\nu_{2,0}<x<\nu_{2,1} \\
    1 
    & {\rm if~}x\ge \nu_{2,1}.
    \end{cases}
\]
\begin{rem}
Note that both of $(\nu_{1,0},\nu_{1,1})$ and $(\nu_{2,0},\nu_{2,1})$ satisfy the inequalities (\ref{eq:1nu}) and (\ref{eq:2nu}) for ($\nu_0,\nu_1$). We also have 
\begin{equation}
\label{eq:nu1,1 nu2,0}
    \nu_{1,1}+2\lambda \le \nu_{2,0}-2\lambda.
\end{equation}
\end{rem}

We denote
\begin{equation}\label{eq:K1K2}
    K_1=\frac{\lambda}{\nu_{1,1}-\nu_{1,0}}
    {\rm ~~and~~}
    K_2=\frac{\lambda}{\nu_{2,1}-\nu_{2,0}}.
\end{equation}

Lemma \ref{lem:to show proper} is for showing properness in the proof of Theorem \ref{thm:main}.

\begin{lem}\label{lem:to show proper}
For any $g,h\in G$, we have
\[
d(g,h)
\le \|\xi[g,h,\psi_1]\|_1+\|\eta[g,h,\psi_2]\|_1.
\]
\end{lem}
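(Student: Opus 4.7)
The plan is to fix a geodesic $p\in\G_{g,h}$, so that $d(g,h)=|p|=|\E(p)|$, and then split the edges of $p$ into two classes: those that lie in some contour of $\C_{p,\nu_{2,0}-2\lambda}$ (to be paid for by $\|\xi[g,h,\psi_1]\|_1$) and those that do not (to be paid for by $\|\eta[g,h,\psi_2]\|_1$). Set $\A=\C_{p,\nu_{2,0}-2\lambda}$.

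For the second class, any edge $e\in\E(p)\sm\bigcup_{r\in\A}\E(r)$ satisfies $\eta[g,h,\psi_2](e)=1$ by Lemma~\ref{lem:eta proper} applied with the parameters of $\psi_2$ (the hypotheses of Section~\ref{sec:construction of arrays} hold for $(\nu_{2,0},\nu_{2,1})$). Since $\eta[g,h,\psi_2]\ge 0$ everywhere, this immediately gives
\[
\|\eta[g,h,\psi_2]\|_1\ \ge\ \bigl|\E(p)\bigr|\ -\ \Bigl|\E(p)\cap\bigcup_{r\in\A}\E(r)\Bigr|.
\]
For the first class, the key numerical observation is that with the chosen parameters
\[
\nu_{2,0}-2\lambda\ =\ 9.1\lambda\ =\ \nu_{1,1}+2\lambda,
\]
so every $r\in\A$ lies in $\C_{p,\nu_{1,1}+2\lambda}$ and Lemma~\ref{lem:xi for big contour} (applied with $\psi_1$) gives $\xi[g,h,\psi_1](r)=|r|$. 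Since $\xi[g,h,\psi_1]\ge 0$, one obtains
\[
\|\xi[g,h,\psi_1]\|_1\ \ge\ \sum_{r\in\A}|r|\ \ge\ \sum_{r\in\A}|r\cap p|\ \ge\ \Bigl|\bigcup_{r\in\A}(r\cap p)\Bigr|\ =\ \Bigl|\E(p)\cap\bigcup_{r\in\A}\E(r)\Bigr|,
\]
where the middle inequalities are trivial (union bound and $|r\cap p|\le|r|$).

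Adding the two displayed inequalities cancels the intersection term and yields $\|\xi[g,h,\psi_1]\|_1+\|\eta[g,h,\psi_2]\|_1\ge|\E(p)|=d(g,h)$, which is the claim. There is no real obstacle here; the only thing to verify carefully is the numerical compatibility between the two functions, namely that the threshold $\nu_{2,0}-2\lambda$ beyond which $\eta$ may fail to be $1$ matches the threshold $\nu_{1,1}+2\lambda$ above which $\xi$ saturates at $|r|$, and this is exactly why the parameters $\nu_{1,1}=7.1\lambda$ and $\nu_{2,0}=11.1\lambda$ were fixed as they were at the start of Section~\ref{sec:main thm}.
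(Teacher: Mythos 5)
Your proposal is correct and follows essentially the same route as the paper's own proof: fix a geodesic $p$, split its edges into those covered by contours $r$ with $|r\cap p|\ge(\nu_{1,1}+2\lambda)|r|$ (handled by Lemma \ref{lem:xi for big contour} applied to $\psi_1$) and the rest (handled by Lemma \ref{lem:eta proper} applied to $\psi_2$), and add the two bounds. The only cosmetic difference is that you name the relevant set of contours $\C_{p,\nu_{2,0}-2\lambda}$ while the paper writes $\C_{p,\nu_{1,1}+2\lambda}$; with the chosen parameters $\nu_{1,1}+2\lambda=\nu_{2,0}-2\lambda=9.1\lambda$ these coincide, and the paper's inequality \eqref{eq:nu1,1 nu2,0} is exactly what guarantees the containment you need.
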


\begin{proof}
Fix $p\in\G_{g,h}$. By Lemma \ref{lem:xi for big contour}, we have for any $r\in\C_{p,(\nu_{1,1}+2\lambda)}$, $\xi[g,h,\psi_1](r)=|r|$. This implies
\[
\sum_{r\in \C_{p,(\nu_{1,1}+2\lambda)}}|r|
=\sum_{r\in \C_{p,(\nu_{1,1}+2\lambda)}}\xi[g,h,\psi_1](r)
\le \|\xi[g,h,\psi_1]\|_1.
\]
On the other hand, given $e\in\E(p)\sm \bigcup_{r\in \C_{p,(\nu_{1,1}+2\lambda)}}\E(r)$, we have $e\in\E(p)\sm \bigcup_{r\in \C_{p,(\nu_{2,0}-2\lambda)}}\E(r)$ since \eqref{eq:nu1,1 nu2,0} implies $\C_{p,(\nu_{2,0}-2\lambda)} \subset \C_{p,(\nu_{1,1}+2\lambda)}$. Hence, we have $\eta[g,h,\psi_2](e)=1$ for any $e\in\E(p)\sm \bigcup_{r\in \C_{p,(\nu_{1,1}+2\lambda)}}\E(r)$ by applying Lemma \ref{lem:eta proper} to $\nu_0 = \nu_{2,0}$ and $\psi = \psi_2$. This implies
\[
\Big|p\sm \bigcup_{r\in\C_{p,(\nu_{1,1}+2\lambda)}}r \Big|
=\sum_{e\in\E(p)\sm \bigcup_{r\in \C_{p,(\nu_{1,1}+2\lambda)}}\E(r)}\eta[g,h,\psi_2](e)
\le \|\eta[g,h,\psi_2]\|_1.
\]
Thus, we have
\begin{align*}
    d(g,h)
    =|p|
    &=
    \Big|p\cap \bigcup_{r\in\C_{p,(\nu_{1,1}+2\lambda)}}r \Big|
    +
    \Big|p\sm \bigcup_{r\in\C_{p,(\nu_{1,1}+2\lambda)}}r \Big| \\
    &\le
    \sum_{r\in\C_{p,(\nu_{1,1}+2\lambda)}}|r|
    +
    \Big|p\sm \bigcup_{r\in\C_{p,(\nu_{1,1}+2\lambda)}}r \Big| \\
    &\le
    \|\xi[g,h,\psi_1]\|_1+\|\eta[g,h,\psi_2]\|_1.
\end{align*}
\end{proof}

Next, we will turn maps constructed in Section \ref{subsec:first array} and \ref{subsec:second array} into maps on $G$. Recall Definition \ref{def:E,V} for $\V(r)$ and $\V(e)$.

\begin{defn}
We define a linear map $P_\C \colon \ell^1(\C) \to \ell^1(G)$ by defining for $\xi=\sum_{r\in\C} \xi(r) 1_r \in \ell^1(\C)$,
\[
P_\C \xi
=\sum_{r\in\C} \frac{\xi(r)}{|r|} 1_{\V(r)},
\]
where $1_{\V(r)}(g)=1$ if $g\in \V(r)$ and $1_{\V(r)}(g)=0$ if $g\in G\sm\V(r)$.
\end{defn}

\begin{rem}\label{rem:PC}
We can see $P_\C\xi$ is indeed in $\ell^1(G)$ by the following computation. For each $g\in G$, we define $C_g=\{r\in\C \mid g\in\V(r)\}$, then we have
\begin{align*}
    \|P_\C \xi\|_1
    &= \Big\|\sum_{g\in G} \Big(\sum_{r\in C_g}\frac{\xi(r)}{|r|}\Big) 1_g\Big\|_1 
    = \sum_{g\in G} \Big|\sum_{r\in C_g}\frac{\xi(r)}{|r|}\Big| \\
    &\le \sum_{g\in G} \Big(\sum_{r\in C_g}\frac{|\xi(r)|}{|r|}\Big) \\
    &= \sum_{r\in\C} \Big(\sum_{g\in\V(r)}\frac{|\xi(r)|}{|r|}\Big)
    = \sum_{r\in\C} \frac{|\xi(r)|}{|r|}\cdot|\V(r)| \\
    &= \sum_{r\in\C} |\xi(r)|
    =\|\xi\|_1.
\end{align*}
The above computation also shows that if $\xi(r)\ge 0$ for any $r\in\C$, then $\|P_\C \xi\|_1=\|\xi\|_1$. Indeed, the only inequality above becomes equality in this case.
\end{rem}

\begin{defn}
We define a linear map $P_\E \colon \ell^1(\E) \to \ell^1(G)$ by defining for $\eta=\sum_{e\in\E} \eta(e) 1_e \in \ell^1(\E)$,
\[
P_\E \eta
=\sum_{e\in\E} \frac{\eta(e)}{2} 1_{\V(e)}.
\]
\end{defn}

\begin{rem}\label{rem:PE}
We can see $P_\E \eta$ is indeed in $\ell^1(G)$ in the same way as Remark \ref{rem:PC}. For each $g\in G$, we define $E_g=\{e\in\E \mid g\in\V(e)\}$, then we have
\begin{align*}
    \|P_\E \eta\|_1
    &= \Big\|\sum_{g\in G} \Big(\sum_{e\in E_g}\frac{\eta(e)}{2}\Big) 1_g\Big\|_1 
    = \sum_{g\in G} \Big|\sum_{e\in E_g}\frac{\eta(e)}{2}\Big| \\
    &\le \sum_{g\in G} \Big(\sum_{e\in E_g}\frac{|\eta(e)|}{2}\Big)
    = \sum_{e\in\E} \Big(\sum_{g\in\V(e)}\frac{|\eta(e)|}{2}\Big)
    = \sum_{e\in\E} |\eta(e)|
    =\|\eta\|_1.
\end{align*}
The above computation also shows that if $\eta(e)\ge 0$ for any $e\in\E$, then $\|P_\E \eta\|_1=\|\eta\|_1$. Indeed, the only inequality above becomes equality in this case.
\end{rem}

Now, we prove Proposition \ref{prop:intro proper array}, which is Proposition \ref{prop:proper array} below. Recall that $G=\la X \mid \R \ra$ is a finitely generated $C'(\lambda)$-group with $\lambda=\frac{1}{33}$ satisfying condition \eqref{eq:*}. For $p>0$, $\lambda_G \colon G\act\ell^p(G)$ denotes the left regular representation of $G$ as usual.

\begin{prop}\label{prop:proper array}
There exists a map $\Phi \colon G\times G\ni (g,h) \mapsto \Phi[g,h]\in\ell^1(G)$ satisfying the following.
\begin{itemize}
    \item[(1)]
    For any $g,h\in G$, $\Phi[g,h]$ has finite support and $\Phi[g,h](k)\ge 0$ for any $k\in G$.
    \item[(2)]
    $\Phi$ is symmetric, i.e. $\Phi[g,h]=\Phi[h,g]$ for any $g,h\in G$.
    \item[(3)]
    $\Phi$ is $G$-equivariant, i.e. $\Phi[kg,kh]=\lambda_G(k)\Phi[h,g]$ for any $g,h,k\in G$.
    \item[(4)]
    $d(g,h)\le \|\Phi[g,h]\|_1$ for any $g,h\in G$.
    \item[(5)]
    There exists $L\in \RR_{>0}$ such that $\|\Phi[g,h]-\Phi[gk,h]\|_1\le L\cdot d(1,k)$ for any $g,h,k\in G$.
\end{itemize}
\end{prop}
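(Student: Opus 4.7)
The plan is to define
\[
\Phi[g,h] := P_\C\, \xi[g,h,\psi_1] + P_\E\, \eta[g,h,\psi_2],
\]
combining the ``peripheral'' first array and the ``hyperbolic'' second array built in Sections \ref{subsec:first array} and \ref{subsec:second array}, and then to verify properties (1)--(5) by assembling the estimates already proved.

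Properties (1)--(3) are essentially bookkeeping. Finite support follows because $\C_{g,h,\mu}$ and $\E_{g,h}$ are finite (since $\Gamma$ is locally finite and $\G_{g,h}$ consists of finitely many geodesics). Non-negativity of the two summands is Remarks \ref{rem:xi positive} and \ref{rem:eta positive}, and $P_\C,P_\E$ visibly preserve non-negativity. Symmetry in $(g,h)$ is Lemmas \ref{lem:xi sym eqiv}(1) and \ref{lem:eta sym eqiv}(1). For $G$-equivariance, Lemmas \ref{lem:xi sym eqiv}(2) and \ref{lem:eta sym eqiv}(2) reduce the claim to checking $P_\C\circ\pi_\C(k) = \lambda_G(k)\circ P_\C$ and its $\E$-analogue, and these are immediate from $|kr|=|r|$, $\V(kr)=k\V(r)$, and $\V(ke)=k\V(e)$.

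Property (4) is where Lemma \ref{lem:to show proper} pays off. Since $\Phi[g,h]$ is a sum of two non-negative elements of $\ell^1(G)$, its $\ell^1$-norm equals the sum of the two individual $\ell^1$-norms; the second halves of Remarks \ref{rem:PC} and \ref{rem:PE} then give $\|P_\C\xi[g,h,\psi_1]\|_1 = \|\xi[g,h,\psi_1]\|_1$ and $\|P_\E\eta[g,h,\psi_2]\|_1 = \|\eta[g,h,\psi_2]\|_1$, so that Lemma \ref{lem:to show proper} yields $\|\Phi[g,h]\|_1 \ge d(g,h)$.

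For (5) I would first obtain a uniform bound on single-letter moves: since $P_\C$ and $P_\E$ are contractive on $\ell^1$ by Remarks \ref{rem:PC} and \ref{rem:PE}, Propositions \ref{prop:xi array}(2) and \ref{prop:eta aray} applied to $\psi_1,\psi_2$ respectively give, uniformly in $g,h\in G$ and $x\in X\cup X^{-1}$,
\[
\|\Phi[g,h] - \Phi[gx,h]\|_1 < \frac{1}{(1-K_1)(\nu_{1,1}-\nu_{1,0})} + 1 + \frac{1}{K_2(1-K_2)(\nu_{2,1}-\nu_{2,0})} =: L.
\]
For general $k\in G$, write $k = x_1 x_2 \cdots x_n$ with $n = d(1,k)$ and $x_i\in X\cup X^{-1}$, and telescope through the intermediate vertices $g, gx_1, gx_1 x_2,\ldots, gk$ to conclude $\|\Phi[g,h] - \Phi[gk,h]\|_1 \le L\cdot d(1,k)$. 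The only mildly delicate point is the non-negativity observation used in (4), which upgrades the contractivity of $P_\C, P_\E$ to an isometry on non-negative functions; beyond that, the proposition is a clean combination of the machinery assembled in Section \ref{sec:construction of arrays}.
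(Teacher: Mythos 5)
Your proof is correct and follows essentially the same route as the paper: the same definition $\Phi[g,h] = P_\C\,\xi[g,h,\psi_1] + P_\E\,\eta[g,h,\psi_2]$, the same appeals to Lemmas \ref{lem:xi sym eqiv}, \ref{lem:eta sym eqiv}, \ref{lem:to show proper}, Remarks \ref{rem:PC}, \ref{rem:PE}, and Propositions \ref{prop:xi array}(2), \ref{prop:eta aray}, and the same telescoping via a geodesic word for $k$ in (5). Your explicit reduction of $G$-equivariance to $P_\C\circ\pi_\C(k) = \lambda_G(k)\circ P_\C$ (and the $\E$-analogue) is a slightly more spelled-out version of what the paper leaves as "straightforward," but the content is identical.
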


\begin{proof}
We define a map $\Phi \colon G\times G \to \ell^1(G)$ by
\[
\Phi[g,h]=P_\C \xi[g,h,\psi_1]+P_\E \eta[g,h,\psi_2].
\]
It's straightforward to see $\Phi$ is symmetric and $G$-equivariant by Lemma \ref{lem:xi sym eqiv} and Lemma \ref{lem:eta sym eqiv}. Since $\xi[g,h,\psi_1]$ and $\eta[g,h,\psi_2]$ both have finite support, so does $\Phi[g,h]$. We also have $\Phi[g,h](k)\ge 0$ for any $k\in G$ by Remark \ref{rem:xi positive} and Remark \ref{rem:eta positive}. This also implies
\[
\|\Phi[g,h]\|_1
= \|P_\C \xi[g,h,\psi_1]\|_1+\|P_\E \eta[g,h,\psi_2]\|_1
= \|\xi[g,h,\psi_1]\|_1+\|\eta[g,h,\psi_2]\|_1
\ge d(g,h),
\]
using Remark \ref{rem:PC}, Remark \ref{rem:PE}, and Lemma \ref{lem:to show proper}.

It remains to show condition (5). Let $g,h,k\in G$ and $k=x_1\cdots x_n$, where $n=d(1,k)$ and $x_i\in X\cup X^{-1}$ for each $i\in \{1,\cdots,n\}$. We denote $x_0=1$ for convenience. By Proposition \ref{prop:xi array} (2) and Proposition \ref{prop:eta aray} (cf. \eqref{eq:K1K2}), we have
\begin{align*}
    \|\Phi[g,h]-\Phi[gk,h]\|_1
    &\le \sum_{i=1}^n\|\Phi[gx_0\cdots x_{i-1},h]-\Phi[gx_0\cdots x_i,h]\|_1 \\
    &\le \sum_{i=1}^n \Big(\|P_\C \xi[gx_0\cdots x_{i-1},h,\psi_1]-P_\C \xi[gx_0\cdots x_i,h,\psi_1]\|_1 \\
    &~~~~~~~~~~~~~~+\|P_\E \eta[gx_0\cdots x_{i-1},h,\psi_2]-P_\E \eta[gx_0\cdots x_i,h,\psi_2]\|_1 \Big)\\
    &= \sum_{i=1}^n \Big(\|P_\C (\xi[gx_0\cdots x_{i-1},h,\psi_1]-\xi[gx_0\cdots x_i,h,\psi_1])\|_1 \\
    &~~~~~~~~~~~~~~+\|P_\E (\eta[gx_0\cdots x_{i-1},h,\psi_2]-\eta[gx_0\cdots x_i,h,\psi_2])\|_1 \Big)\\
    &\le \sum_{i=1}^n \Big(\|\xi[gx_0\cdots x_{i-1},h,\psi_1]-\xi[gx_0\cdots x_i,h,\psi_1]\|_1 \\
    &~~~~~~~~~~~~~~+\|\eta[gx_0\cdots x_{i-1},h,\psi_2]-\eta[gx_0\cdots x_i,h,\psi_2]\|_1 \Big)\\
    &\le \sum_{i=1}^n \Big( \frac{1}{(1-K_1)(\nu_{1,1}-\nu_{1,0})}
    + 1+\frac{1}{K_2(1-K_2)(\nu_{2,1}-\nu_{2,0})} \Big)\\
    &=n\cdot \Big( \frac{1}{(1-K_1)(\nu_{1,1}-\nu_{1,0})}
    + 1+\frac{1}{K_2(1-K_2)(\nu_{2,1}-\nu_{2,0})} \Big).
\end{align*}
Since $n=d(1,k)$, (5) is satisfied with $L=\frac{1}{(1-K_1)(\nu_{1,1}-\nu_{1,0})}
    + 1+\frac{1}{K_2(1-K_2)(\nu_{2,1}-\nu_{2,0})}$.
\end{proof}

\begin{rem}\label{rem:proper array}
For the map $\Phi$ in Proposition \ref{prop:proper array}, we define a map $\tPhi \colon G\times G\ni (g,h) \mapsto \tPhi[g,h]\in\ell^2(G)$ by \[
\tPhi[g,h](k)=\sqrt{\Phi[g,h](k)}
\]
for each $g,h,k\in G$. Note that $\tPhi$ is also symmetric and $G$-equivariant (cf. Proposition \ref{prop:proper array} (2)(3)). Proposition \ref{prop:proper array} (4) implies for any $g,h\in G$,
\begin{equation}\label{eq:tPhi proper}
    d(g,h)\le \|\Phi[g,h]\|_1=\|\tPhi[g,h]\|_2^2.
\end{equation}
Also, since $|\sqrt{a}-\sqrt{b}|^2\le|\sqrt{a}-\sqrt{b}|(\sqrt{a}+\sqrt{b})=|a-b|$ for any $a,b\in\RR_{\ge0}$, Proposition \ref{prop:proper array} (5) implies for any $g,h,k\in G$,
\begin{equation}\label{eq:tPhi array}
    \|\tPhi[g,h]-\tPhi[gk,h]\|_2^2\le \|\Phi[g,h]-\Phi[gk,h]\|_1\le L\cdot d(1,k).
\end{equation}
\end{rem}

\begin{proof}[Proof of Theorem \ref{thm:main}]
As discussed in Remark \ref{rem:relation}, it's enough to show any finitely generated $C'(\lambda)$-group $G=\la X\mid \R\ra$ with $\lambda=\frac{1}{33}$ satisfying condition \eqref{eq:*} is bi-exact. We will verify Proposition \ref{prop:bi-exact} (3). By \cite[Theorem 1]{Sl}, $G$ has finite asymptotic dimension, hence $G$ is exact.

We define a map $c\colon G \to \ell^2(G)$ by
\[
c(g)=\tPhi[1,g],
\]
where $\tPhi$ is a map in Remark \ref{rem:proper array} defined from a map $\Phi$ of Proposition \ref{prop:proper array}. Note that $(\ell^2(G),\lambda_G)$ is obviously weakly contained in $(\ell^2(G),\lambda_G)$. By \eqref{eq:tPhi proper}, we have for any $n\in \NN$,
\[
\{g\in G \mid \|c(g)\|_2\le n\}\subset
\{g\in G \mid d(1,g)\le n^2\}.
\]
Since $X$ is finite, $\Gamma$ is locally finite, hence the right hand side above is finite. Hence, $c$ is proper.

Finally, since $\tPhi$ is symmetric and $G$-equivariant and satisfies \eqref{eq:tPhi array}, we have for any $g,h,k\in G$,
\begin{align*}
    \|c(gkh)-\lambda_G(g)c(k)\|_2
    &=\|\tPhi[1,gkh]-\lambda_G(g) \tPhi[1,k]\|_2 \\
    &\le\|\tPhi[1,gkh]-\tPhi[1,gk]\|_2+\|\tPhi[1,gk]-\lambda_G(g)\tPhi[1,k]\|_2 \\
    &=\|\tPhi[gkh,1]-\tPhi[gk,1]\|_2+\|\tPhi[1,gk]-\tPhi[g,gk]\|_2 \\
    &\le \sqrt{Ld(1,h)}+\sqrt{Ld(1,g)}.
\end{align*}
This implies $\sup_{k\in G}\|c(gkh)-\lambda_G(g)c(k)\|_2<\infty$ for any $g,h\in G$.
\end{proof}

\begin{rem}\label{rem:the end of proof of Theorem 1.1}
    By the same argument as the end of the proof of Theorem \ref{thm:main}, the following holds. Let $G$ be a group, $\mathcal{K}$ a Hilbert space, and $\pi\colon G\to \mathcal{U}(\mathcal{K})$ a unitary representation. We denote $\pi(g) \in \mathcal{U}(\mathcal{K})$ by $\pi_g$ for each $g \in G$. Suppose that a map $r \colon G \times G \to \mathcal{K}$ satisfies (1), (2), and (3) below.
    \begin{itemize}
    \item[(1)]
    $\big[ \forall\, g,h \in G,\,r(g,h) = r(h,g) \big] \vee \big[\forall\, g,h \in G,\,r(g,h) = -r(h,g)\big]$.
    \item[(2)]
    $\forall\,g,h,k \in G,\, \pi_k(r(g,h)) = r(kg,kh)$.
    \item[(3)]
    $\forall\, g \in G,\, \sup_{k \in G} \|r(1,k) - r(g,k)\| < \infty$.
    \end{itemize}
    Then, the map $c \colon G \to \mathcal{K}$ defined by $c(g) = r(1,g)$ satisfies $\sup_{k\in G}\|c(gkh)-\pi_g (c(k))\|<\infty$ for all $g,h\in G$.
\end{rem}

\section{Generalization to the infinitely generated case} \label{sec:infinite}

In this section, we generalize Theorem \ref{thm:main} to some infinitely generated small cancellation groups. Note that in Theorem \ref{thm:infinite case}, both $X$ and $\R$ can be infinite.

\begin{lem} \label{lem:free product}
    If $\{G_n\}_{n\in\NN}$ are countable bi-exact groups, then $\ast_{n\in\NN}G_n$ is bi-exact.
\end{lem}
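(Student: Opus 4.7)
My plan is to verify Proposition~\ref{prop:bi-exact}(2) for $G := \ast_{n\in\NN} G_n$: show that $G$ is exact and admits a proper anti-symmetric array into the left regular representation $(\ell^2(G),\lambda_G)$. Each $G_n$ is exact since bi-exactness implies exactness, and exactness is preserved under countable free products: the finite-factor case is a theorem of Dykema, and the countable case follows by writing $G = \varinjlim_N (\ast_{n\le N} G_n)$ and using that exactness is preserved under directed unions.

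For each $n$, fix a proper anti-symmetric array $r_n\colon G_n \to \ell^2(G_n)$ (supplied by Proposition~\ref{prop:bi-exact}(2)), and view $\ell^2(G_n) \subset \ell^2(G)$ via the natural inclusion. For $g = g_1 g_2 \cdots g_k$ in reduced form (each $g_i \in G_{n_i}\setminus\{1\}$, $n_i \neq n_{i+1}$), I would define
\[
r(g) := \sum_{i=1}^{k}\lambda_G(g_1 g_2 \cdots g_{i-1})\, r_{n_i}(g_i).
\]
Anti-symmetry $\lambda_G(g) r(g^{-1}) = -r(g)$ is checked by direct computation applying $\lambda_{G_{n_i}}(g_i) r_{n_i}(g_i^{-1}) = -r_{n_i}(g_i)$ syllable by syllable to the reversed expression of $g^{-1}$. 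For properness, I would augment $r$ with a Bass--Serre tree component $c_T \colon G \to \ell^2(E(T))_\RR$, the standard signed 1-cocycle whose norm squared is the syllable length of $g$; since edges of the Bass--Serre tree have trivial stabilizers for a free product, $\ell^2(E(T))$ is a direct sum of copies of $\lambda_G$ and the augmented array still takes values in a representation weakly contained in $\lambda_G$ (which, via the equivalence of Proposition~\ref{prop:bi-exact}, can be transported to an array into $\lambda_G$ itself). Then $\|r(g)\|$ diverges whenever either some $\|r_{n_i}(g_i)\| \to \infty$ (by properness of $r_{n_i}$) or the syllable length $k \to \infty$ (detected by $c_T$).

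The main obstacle is the bounded-difference condition $\sup_{h\in G}\|r(gh) - \lambda_G(g)r(h)\| < \infty$. By a triangle-inequality induction on the syllable length of $g$, it suffices to treat the case $g \in G_m$. With $h = h_1 h_2 \cdots h_\ell$ reduced, I would split into three cases according to the cancellation between $g$ and $h_1$: (i) if $h_1 \notin G_m$, no cancellation occurs and the difference equals exactly $r_m(g)$; (ii) if $h_1 \in G_m$ and $gh_1 \neq 1$, the difference reduces to $r_m(gh_1) - \lambda_{G_m}(g) r_m(h_1)$, bounded uniformly in $h_1$ by the array property of $r_m$ at the fixed element $g$; (iii) if $gh_1 = 1$, so $h_1 = g^{-1}$, the surviving extra term $\lambda_{G_m}(g) r_m(g^{-1})$ equals $-r_m(g)$ by the anti-symmetry of $r_m$, again bounded. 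Case (iii) is precisely where the anti-symmetric (as opposed to symmetric) form of an array is indispensable, which matches the remark after Definition~\ref{def:array}.
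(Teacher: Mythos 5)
Your construction is on the right track in several places (the syllable-wise formula for $r$, the case analysis for the bounded-difference property, and the observation that anti-symmetry is what saves case (iii)), and these do match the paper's treatment closely. But the properness argument has a genuine gap that cannot be repaired by the Bass--Serre cocycle alone.

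The problem is an infinite-factor phenomenon. Your claim that ``$\|r(g)\|$ diverges whenever either some $\|r_{n_i}(g_i)\|\to\infty$ or the syllable length $k\to\infty$'' is not equivalent to properness when the index set $\NN$ is infinite. For each $n$, properness of $r_n$ bounds the cardinality of $\{g\in G_n:\|r_n(g)\|_2\le N\}$, but there is no uniformity across $n$: for each $n$ you could pick $g_n\in G_n\setminus\{1\}$ with, say, $\|r_n(g_n)\|_2\le 1$ (nothing in Definition~\ref{def:array} forbids $r_n$ vanishing on a nontrivial element). Each such $g_n$ has syllable length $1$, so the Bass--Serre cocycle contributes a uniformly bounded amount, and your augmented map takes norm $\le\sqrt{2}$ (say) on the infinite set $\{g_n\}_{n\in\NN}$. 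The tree cocycle measures only the number of syllables, not which free factor each syllable lives in, so it cannot rule this out. The paper resolves this with a normalization step you are missing: it modifies each $r_n$ (on the finite set $F_n=\{g:\|r_n(g)\|_2<n\}$, replacing $r_n(g)$ by $n(1_{\{g\}}-1_{\{1\}})$) so that $\|r_n(g)\|_2\ge n$ for all $g\ne 1$. With this normalization, $\|R(g)\|\le N$ forces the syllable length to be $\le N^2$, forces each $n_i\le N$, and forces $\|r_{n_i}(h_i)\|\le N$, which together give a finite count. This is the essential idea your proposal lacks.

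A secondary point: in your $r(g)=\sum_i\lambda_G(g_1\cdots g_{i-1})r_{n_i}(g_i)$ landing in $\ell^2(G)$ alone, the summands are not mutually orthogonal. The support of the $i$-th term lies in $g_1\cdots g_{i-1}G_{n_i}$, and for consecutive indices these cosets share the point $g_1\cdots g_{i}$ (the ``$1\in G_{n_{i+1}}$'' entry overlaps the ``$g_i\in G_{n_i}$'' entry). So $\|r(g)\|^2$ need not equal $\sum_i\|r_{n_i}(g_i)\|^2$ and cancellations could lose information. The paper's device of tensoring the $i$-th term with the basis vector $e_{n_i}\in\ell^2(\NN)$ (so $R$ lands in $\ell^2(G)\otimes\ell^2(\NN)$, which is still weakly contained in the regular representation, verifying Proposition~\ref{prop:bi-exact}(3) rather than (2)) makes the terms with distinct $n_i$ orthogonal by fiat, while terms with equal $n_i$ are shown to have disjoint support. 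Your plan to ``transport'' back to $\lambda_G$ via the equivalence in Proposition~\ref{prop:bi-exact} is unnecessary once you simply appeal to criterion (3), as the paper does.
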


\begin{proof}
    Since a free product of two exact groups is exact and an increasing union of exact groups are exact (cf. \cite{BO}), $\ast_{n=1}^N G_n$ is exact for any $N\in\NN$ and this implies that $\ast_{n\in\NN}G_n$ is exact. For each $n\in\NN$, there exists a proper array $r_n$ of $G_n$ into the left regular representation $(\ell^2(G_n),\lambda_{G_n})$ by Proposition \ref{prop:bi-exact} (2). We claim that for any $n\in\NN$, we can assume
    \begin{align}\label{eq:r_n}
        \|r_n(g)\|_2\ge n
    \end{align}
    for any $g\in G_n \setminus \{1\}$. Indeed, since $r_n$ is proper, the set $F_n=\{g\in G_n \mid \|r_n(g)\|_2 < n \}$ is finite. Note $F_n=F_n^{-1}$ by Definition \ref{def:array} (1). Define a map $r'_n \colon G_n \to \ell^2(G_n)$ by
    \[ r'_n(g)=
        \begin{cases}
    n(1_g-1_1) & \mathrm{if}\;\; g\in F_n \\
    r_n(g) & \mathrm{if}\;\; g\in G_n\setminus F_n,
    \end{cases}
    \]
    where $1_g\in \ell^2(G_n)$ is defined by $1_g(g)=1$ and $1_g(x)=0$ if $x\neq g$. Note $\|r'_n(g)\|_2 =\sqrt{2}n$ for any $g \in F_n\setminus\{1\}$. This implies $\|r'_n(g)\|_2\ge n$ for any $g\in G_n \setminus \{1\}$. It's straightforward to see $r'_n$ satisfies Definition \ref{def:array} (1). Since $F_n$ is finite, we have $\sup_{g\in G_n} \|r_n(g)-r'_n(g)\|_2 < \infty$. Hence, $r'_n$ is proper and satisfies Definition \ref{def:array} (2) as well. Thus, the claim follows by replacing $r_n$ by $r'_n$.

    We denote $G=\ast_{n\in\NN}G_n$ for simplicity and regard each $r_n$ as a map from $G_n$ to $\ell^2(G)$ by composing it with the embedding $\ell^2(G_n) \inj \ell^2(G)$. Consider the unitary representation $(\lambda_G\otimes {\rm id}_{\ell^2(\NN)},\ell^2(G)\otimes \ell^2(\NN))$ of $G$ and let $\{e_n\}_{n\in\NN}$ be an orthonormal basis of $\ell^2(\NN)$ defined by $e_n=1_{\{n\}} \in\ell^2(\NN)$ for each $n\in\NN$. Since $(\lambda_G\otimes {\rm id}_{\ell^2(\NN)},\ell^2(G)\otimes \ell^2(\NN))$ is unitarily isomorphic to a direct sum representation $\bigoplus_{n\in\NN} (\lambda_G, \ell^2(G))$, $(\lambda_G\otimes {\rm id}_{\ell^2(\NN)},\ell^2(G)\otimes \ell^2(\NN))$ is weakly contained in $(\lambda_G, \ell^2(G))$. For $g\in G\setminus\{1\}$, let $g=h_1\cdots h_m$ be the normal form of $g$, where $m\in\NN$ and $h_i\in G_{n_i}\setminus\{1\}$ with $n_i\neq n_{i+1}$ for any $i$. For convenience, we denote $h_0=1$ and define a map $R\colon G\to \ell^2(G)\otimes \ell^2(\NN)$ by $R(1)=0$ and if $g\neq 1$,
    \[
    R(g)=\sum_{i=1}^m \lambda_G(h_0\cdots h_{i-1})r_{n_i}(h_i)\otimes e_{n_i}.
    \]
    We claim $R$ is proper. Indeed, let $N\in\NN$ and $g\in G\setminus \{1\}$ satisfy $\|R(g)\| \le N$. Let $g=h_1\cdots h_m$ be the normal form of $g$, then it's straightforward to see
    \begin{align}\label{eq:R(g)}
            \|R(g)\|^2 =\sum_{i=1}^m \|r_{n_i}(h_i)\|_2^2 \le N^2.
    \end{align}
    For the equality in \eqref{eq:R(g)}, we used that supports of $\lambda_G(h_0\cdots h_{i-1})r_{n_i}(h_i)$ and $\lambda_G(h_0\cdots h_{j-1})r_{n_j}(h_j)$ are disjoint when $i\neq j$ and $n_i=n_j$. Since $h_i\neq 1$ for each $i$, \eqref{eq:r_n} and \eqref{eq:R(g)} imply $m \le N^2$, and $n_i \le N$ and $\|r_{n_i}(h_i)\|_2 \le N$ for any $i\in\{1,\cdots,m\}$. Note that the set $\{x\in G_{n_i} \mid \|r_{n_i}(x)\|_2 \le N\}$ is finite, since $r_{n_i}$ is proper. This implies
    \[
    \#\{g\in G \mid \|R(g)\| \le N\}
    \le
    \Big(\sum_{n=1}^N \#\{x\in G_n \mid \|r_n(x)\|_2 \le N\} \Big)^{N^2}
    < \infty.
    \]
    Hence, $R$ is proper. It's not difficult to see $R$ satisfies Definition \ref{def:array} (1) and for any $g\in G\setminus\{1\}$ with normal form $g=h_1\cdots h_m$,
    \[
    \sup_{k\in G}\|R(gk)-\lambda_G(g)R(k)\|
    \le \sum_{i=1}^m \sup_{x\in G_{n_i}}\|r_{n_i}(h_ix)-\lambda_{G_{n_i}}(h_i)r_{n_i}(x)\|_2
    <\infty.
    \]
    From this, we can verify Proposition \ref{prop:bi-exact} (3) in the same way as the proof of Theorem \ref{thm:main} (see Remark \ref{rem:the end of proof of Theorem 1.1}).
\end{proof}

\begin{thm} \label{thm:infinite case}
    Suppose that $G = \la X \mid \R \ra$ is $C'(\lambda)$-group with $\lambda < \frac{1}{33}$ and $X$ is countable. If there exists $N \in\NN$ such that for any letter $x\in X$,
    \[
    \#\{y\in X \mid \exists r \in \R {\rm ~s.t.~} x,y\in r \} \le N,
    \] then $G$ is bi-exact.
\end{thm}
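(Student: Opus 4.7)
The plan is to combine a free product decomposition of $G$ with Theorem~\ref{thm:main} and Lemma~\ref{lem:free product}.

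First I would introduce an equivalence relation $\sim$ on $X$: declare $x \sim y$ iff there is a finite chain $x = x_0, x_1, \ldots, x_n = y$ in $X$ with each consecutive pair $\{x_i, x_{i+1}\}$ occurring in a single relation of $\R$. Let $\{X_\alpha\}_{\alpha \in A}$ be the equivalence classes and $\R_\alpha = \{r \in \R : \mathrm{supp}(r) \subseteq X_\alpha\}$. Since the letters of any single relation are pairwise $\sim$-equivalent via that relation, one has $\R = \bigsqcup_\alpha \R_\alpha$; a van Kampen argument built on Greendlinger's lemma (Lemma~\ref{lem:Green}), which forces every reducible word to contain a large subword of some single relation (and that relation is supported in one class), identifies $G$ with the countable free product
\[
G \;\cong\; \ast_{\alpha \in A} H_\alpha, \qquad H_\alpha := \langle X_\alpha \mid \R_\alpha \rangle.
\]

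Next I would verify that under the bounded-degree hypothesis each equivalence class $X_\alpha$ is finite, so that each $H_\alpha$ is a finitely generated $C'(\lambda)$-group. The idea is that any infinite $\sim$-chain $x_0, x_1, \ldots$ inside $X_\alpha$ would produce infinitely many distinct witnessing relations $r_0, r_1, \ldots$ each supported in the narrow letter neighborhood $N(x_i) \cup N(x_{i+1})$ (of size $\le 2N+1$); the $C'(\lambda)$-condition with $\lambda < \tfrac{1}{33}$ combined with the classification of small geodesic polygons (Theorem~\ref{thm:classification}) should then yield a combinatorial pigeonhole contradiction, ruling out infinite chains. Each $H_\alpha$ is then a finitely generated $C'(\lambda)$-group with $\lambda < \tfrac{1}{33}$, in particular a $C'(\tfrac{1}{33})$-group, so Theorem~\ref{thm:main} gives that each $H_\alpha$ is bi-exact. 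Since $X$ is countable, so is $A$; after padding $\{H_\alpha\}$ by trivial groups to a countably infinite family if necessary, Lemma~\ref{lem:free product} yields bi-exactness of $G \cong \ast_{\alpha \in A} H_\alpha$.

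The main obstacle is the finiteness claim for the classes $X_\alpha$: bounded letter degree alone is compatible with infinite $\sim$-chains (any graph of bounded degree can have infinite components), so a careful combinatorial argument leveraging the piece bound from $C'(\lambda)$ against the narrow support of the witnessing relations is required. This is the place where the strict inequality $\lambda < \tfrac{1}{33}$ (as opposed to $\lambda \le \tfrac{1}{33}$) should play its decisive role, providing the slack needed for the pigeonhole to bite.
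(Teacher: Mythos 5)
Your free product decomposition $G \cong \ast_\alpha H_\alpha$ over the connected components of the letter graph $\Lambda$ matches the first step of the paper's proof exactly, and the reduction via Lemma~\ref{lem:free product} is correct. However, the pivotal claim that each equivalence class $X_\alpha$ must be finite is false, and no amount of pigeonholing against the $C'(\lambda)$-condition will rescue it. A connected, uniformly locally finite graph can of course be infinite: take $X=\{x_1,x_2,\dots\}$ and, for each $n$, a single relation $r_n$ that is a long, sufficiently generic word only in $x_n^{\pm 1}$ and $x_{n+1}^{\pm 1}$. Choosing the $r_n$ so that any common subword of two distinct $r_m,r_n$ (and of distinct cyclic permutations) is short relative to their lengths gives a $C'(\lambda)$-presentation for arbitrarily small $\lambda$, with $\Lambda$ a ray (degree at most $2$, so $N=2$) but the single connected class $X_\alpha=X$ infinite. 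Greendlinger's lemma and the bigon/triangle classification say nothing against this; they control van Kampen diagrams, not the combinatorics of which letters share relations.

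The paper instead handles an infinitely generated connected component directly, without finiteness. After reducing to $\Lambda$ connected, it embeds $\la X\mid\R\ra$ into a \emph{finitely generated} $C'(\tfrac{1}{33})$-group $H$: it introduces two auxiliary free groups $F=\la a_1,\dots,a_{N+1},b,c\ra$ and $F'$, codes each $x\in X$ by long words $\psi(x)\in F$, $\psi'(x)\in F'$ of controlled length depending on $d_\Lambda(o,x)$, and forms the amalgamated free product $H=\la (G*F)*F'\mid x\psi(x)=\psi'(x),\ x\in X\ra$. The coding is designed so that the rewritten relations $\tR$ still satisfy $C'(\tfrac{1}{33})$; this is precisely where the strict inequality $\lambda<\tfrac{1}{33}$ is spent, absorbing the extra pieces created by the substitutions $x\mapsto\psi'(x)\psi(x)^{-1}$. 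Then Theorem~\ref{thm:main} applies to $H$, and $G\le H$ is bi-exact because bi-exactness passes to subgroups. So the strict inequality is indeed decisive, but for the embedding's piece estimates, not for any finiteness of the classes, and the step you flagged as ``the main obstacle'' is where a genuinely different idea is required.
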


\begin{proof}
    We define a graph $\Lambda$ as follows. The vertex set $V(\Lambda)$ is $X$ and two distinct vertices $x,y \in X$ are connected by an edge if and only if there exists $r\in \R$ such that $x,y \in\R$. Let $N \in\NN$ satisfy for any $x\in X$, $\#\{y\in X \mid \exists r \in \R {\rm ~s.t.~} x,y\in r \} \le N$. This implies that $\Lambda$ is uniformly locally finite, that is, for any vertex $x \in V(\Lambda)$, the valency of $x$ is at most $N$. Let $\{\Lambda_i\}_{i\in I}$ be connected components of $\Lambda$, that is, each $\Lambda_i$ is a maximal connected subgraph of $\Lambda$ and $V(\Lambda) = \bigsqcup_{i\in I}V(\Lambda_i)$. Since $X$ is countable, $I$ is countable. For each $i\in I$, define $\R_i = \{r \in \R \mid \exists x \in V(\Lambda_i) {\rm ~s.t.~} x \in r\}$. By maximality of each $\Lambda_i$, we can see $\R=\bigsqcup_{i\in I}\R_i$, hence
    \[
    G \cong \ast_{i\in I}\la V(\Lambda_i) \mid \R_i \ra.
    \]
    If all $\la V(\Lambda_i) \mid \R_i \ra$'s are bi-exact, then $G$ is bi-exact by Lemma \ref{lem:free product}. Hence, we assume $\Lambda$ is connected in the following and it's enough for the proof.
    
    We will embed $G$ into a finitely generated $C'(\frac{1}{33})$-group. Since $\lambda < \frac{1}{33}$, there exists $M \in\NN$ such that
    \begin{align}\label{eq:M}
           \Big(1-\frac{2}{M}\Big)^{-1}\Big[\Big(1+\frac{1}{M}\Big)\lambda + \frac{2}{M} \Big] < \frac{1}{33}.
    \end{align}
    Let $F=\la a_1,\cdots a_{N+1},b,c \ra$ be a free group of rank $N+3$. Note that $\la w c w^{-1} \mid w\in \la a_1,\cdots,a_{N+1}, b \ra \ra$ is a free subgroup of rank $\infty$ with basis $\{w c w^{-1} \mid w\in \la a_1,\cdots,a_{N+1}, b \ra \}$. Define
    \[
    A_n = \{ (wb)^M c (wb)^{-M} \mid w\in \la a_1,\cdots,a_{N+1} \ra, |w|=n \}
    \]
    for each $n \in \NN \cup \{0\}$ and $A = \bigcup_{n \in\NN \cup \{0\}} A_n$. We define an injective map $\psi \colon X \to A$ as follows. Fix a vertex $o \in V(\Lambda)$ and define $S(n,o)=\{x\in V(\Lambda) \mid d_{\Lambda} (o,x)=n \}$ for each $n\in\NN\cup \{0\}$. Since $\# S(n,o) \le N^n$ and $\# A_n \ge N^n$ for any $n\in\NN \cup \{0\}$, we can take an injective map $\psi_n \colon S(n,o) \to A_{n+M}$ for each $n \in \NN \cup \{0\}$. We define $\psi = \bigcup_{n\in \NN \cup \{0\}} \psi_n$. Note that for any relation $r \in \R$ and any letters $x,y \in r$, $x$ and $y$ are connected by an edge of $\Lambda$. This implies $|d_{\Lambda}(o,x)-d_{\Lambda}(o,y)|\le d_{\Lambda}(x,y)\le 1$. Hence, for any $r \in\R$, there exists $m\in \NN$ with $m\ge M$ such that
    \begin{align} \label{eq:length of psi}
         \{|\psi(x)| \mid x \in r \} \subset \{2(m+1)M+1 , 2(m+2)M+1\}.
    \end{align}
    In exactly the same way, we can define another free group $F'=\la a'_1,\cdots a'_{N+1},b',c' \ra$, $A'_n = \{ (w'b')^M c' (w'b')^{-M} \mid w'\in \la a'_1,\cdots,a'_{N+1} \ra, |w'|=n \}$ ($n\in\NN\cup\{0\}$), $A' = \bigcup_{n \in\NN \cup \{0\}} A'_n$, and an injective map $\psi' \colon X \to A'$. That is, for any $x \in X$, $\psi'(x)$ is a word obtained by replacing $a_1,\cdots,a_{N+1},b,c$ in $\psi(x)$ by $a'_1,\cdots,a'_{N+1},b',c'$ respectively. Define a group $H$ by
    \[
    H = \la (G*F)*F' \mid x\psi(x)=\psi'(x), x\in X \ra.
    \]
    Note that $\{\psi(x) \mid x \in X\}$ and $\{\psi'(x) \mid x \in X\}$ generate a free subgroup of $F$ and $F'$ respectively. Also, $\{x\psi(x) \mid x \in X\}$ generates a free subgroup of $G*F$, since the map $q \colon G*F \to F$ defined by $q|_G \equiv 1$ and $q|_F={\rm id}_F$ satisfies $q(x\psi(x))=\psi(x)$ for any $x\in X$. Hence, $H$ is an amalgamated free product of $G*F$ and $F'$ with isomorphic subgroups $\la x\psi(x) \mid x \in X\ra$ and $\la\psi'(x) \mid x \in X\ra$. Thus, $G$ is a subgroup of $H$.
    
    We will show that $H$ is a finitely generated $C'(\frac{1}{33})$-group. For a word $w$ of $X$, we denote by $w(\{\psi'(x)\psi(x)^{-1}\}_{x\in X})$ the word of $\{a_1,\cdots a_{N+1},b,c,a'_1,\cdots a'_{N+1},b',c'\}$ obtained by replacing each $x \in X$ in $w$ by $\psi'(x)\psi(x)^{-1}$. Note that $w(\{\psi'(x)\psi(x)^{-1}\}_{x\in X})$ is not necessarily reduced, even if $w$ is reduced. We define $\tR$ to be the set of all cyclically reduced words of $\{a_1,\cdots a_{N+1},b,c,a'_1,\cdots a'_{N+1},b',c'\}$ one of whose cyclic permutations is obtained by cyclically reducing $r(\{\psi'(x)\psi(x)^{-1}\}_{x\in X})$ with some $r \in \R$. Since $\R$ is symmetric, $\tR$ is also symmetric. By Tietze transformation, we can see
    \begin{align*}
        H 
        &\cong \la a_1,\cdots a_{N+1},b,c,a'_1,\cdots a'_{N+1},b',c' \mid r(\{\psi'(x)\psi(x)^{-1}\}_{x\in X}), r\in \R \ra \\
        &= \la a_1,\cdots a_{N+1},b,c,a'_1,\cdots a'_{N+1},b',c' \mid  \tR \ra.
    \end{align*}
    Note $\{\psi'(x)\psi(x)^{-1} \mid x \in X\}$ satisfies $C'(\frac{1}{M})$-condition. Let $u,v \in\tR$ be distinct words and $w$ be the piece for $u,v \in  \R$. There exist words $r,s \in \R$ such that one of cyclic permutations of $u,v$ is obtained by cyclically reducing $r(\{\psi'(x)\psi(x)^{-1}\}_{x\in X}), s(\{\psi'(x)\psi(x)^{-1}\}_{x\in X})$ respectively. Let $t$ be the piece for $r$ and $s$. Since $r,s$ can also be their cyclic permutations, we assume without loss of generality that $|t|$ is the biggest among all the length of pieces for some distinct cyclic permutations of $r$ and $s$. For $r$, let $m\in\NN$ with $m\ge M$ satisfy \eqref{eq:length of psi}. Using $\{\psi'(x)\psi(x)^{-1} \mid x \in X\}$ satisfies $C'(\frac{1}{M})$-condition, it's not difficult to see
    \begin{align}\label{eq:|w|}
        |w| 
        < (2(m+2)M+1)|t| + \frac{2}{M}|r|
        < (2(m+2)M+1)\lambda|r| + \frac{2}{M}|r|
    \end{align}
    and
    \begin{align}\label{eq:|u|}
        |u|> \Big(1-\frac{2}{M}\Big)(2(m+1)M+1)|r|.
    \end{align}
    The term $\frac{2}{M}|r|$ in \eqref{eq:|w|} comes from possible common letters next to both ends of $t(\{\psi'(x)\psi(x)^{-1}\}_{x\in X})$. \eqref{eq:|w|} and \eqref{eq:|u|} imply
    \begin{align*}
        |w|
        &< \Big((2(m+2)M+1)\lambda + \frac{2}{M}\Big)|r| \\
        &< \frac{(2(m+2)M+1)\lambda + \frac{2}{M}}{\big(1-\frac{2}{M}\big)(2(m+1)M+1)}|u| \\
        &=\Big(1-\frac{2}{M}\Big)^{-1}\Big[\Big(1+\frac{2M}{2(m+1)M+1}\Big)\lambda + \frac{\frac{2}{M}}{2(m+1)M+1}  \Big] |u| \\
        &\le \Big(1-\frac{2}{M}\Big)^{-1}\Big[\Big(1+\frac{1}{M}\Big)\lambda + \frac{2}{M} \Big] |u|
        < \frac{1}{33}|u|.
    \end{align*}
    Here, we used $m \ge M$ and \eqref{eq:M}. In the same way, we also have $|w|<\frac{1}{33}|v|$. Hence, $\tR$ satisfies $C'(\frac{1}{33})$-condition. Since $H$ is a finitely generated $C'(\frac{1}{33})$-group, $H$ is bi-exact by Theorem \ref{thm:main}. Since $G$ is a subgroup of a bi-exact group $H$, $G$ is bi-exact by \cite[Lemma 3.21]{Oy}.
\end{proof}

\begin{rem}
    Note that any finitely generated $C'(\lambda)$-group with $\lambda < \frac{1}{33}$ trivially satisfies the condition of Theorem \ref{thm:infinite case}.
\end{rem}

\providecommand{\bysame}{\leavevmode\hbox to3em{\hrulefill}\thinspace}
\providecommand{\MR}{\relax\ifhmode\unskip\space\fi MR }
\providecommand{\MRhref}[2]{%
  \href{http://www.ams.org/mathscinet-getitem?mr=#1}{#2}
}
\providecommand{\href}[2]{#2}


\vspace{5mm}

\noindent  Department of Mathematics, Vanderbilt University, Nashville 37240, U.S.A.

\noindent E-mail: \emph{koichi.oyakawa@vanderbilt.edu}

\end{document}